\newcommand{\R}{\mathbb{R}}
\newcommand{\N}{\mathbb{N}}
\newcommand{\Rb}{\overline{\R}}
\newcommand{\Ls}[1]{\text{L}^{#1}}
\renewcommand{\L}[1]{\text{L}^{#1}(\Omega)}
\newcommand{\BV}{\text{BV}(\Omega)}
\newcommand{\TV}{\text{BV}}
\renewcommand{\H}[1]{\text{H}^{#1}(\Omega)}
\newcommand{\D}{\text{D}}
\newcommand{\diff}{\textnormal{\,d}}
\newcommand{\dx}{\diff x}
\newcommand{\diw}[1]{\textnormal{div}\left(#1\right)}
\DeclareMathOperator*{\argmin}{\textnormal{argmin}}
\newcommand{\Cov}[2]{\mathbf{Cov}\left( #1, #2 \right)}
\newcommand{\Prob}{\mathbb{P}}
\newcommand{\med}{\text{med}}
\newcommand{\supp}{\textnormal{supp}}
\newcommand{\eps}{\varepsilon}
\renewcommand{\ker}{\textnormal{ker}}
\newcommand{\ran}{\textnormal{ran}}
\newcommand{\set}[1]{\left\{ #1 \right\}}
\newcommand{\abs}[1]{\left| #1 \right|}
\newcommand{\norm}[1]{\left\| #1 \right\|}
\newcommand{\inner}[2]{\left\langle #1, #2 \right\rangle}
\newcommand{\bigo}{\mathcal{O}}
\newcommand{\ra}{\rightarrow}
\newcommand{\spa}{\textnormal{span}}
\newcommand{\err}{\textnormal{err}}
\newcommand{\hideandshow}[2]{%
\ifthenelse{\isundefined{\shortversion}}{#1}{#2}}
\theoremstyle{plain}
\newtheorem{theorem}{Theorem}[section]
\newtheorem{proposition}[theorem]{Proposition}
\newtheorem{assumption}[theorem]{Assumption}
\theoremstyle{definition}
\newtheorem{lemma}[theorem]{Lemma}
\newtheorem{definition}[theorem]{Definition}
\newtheorem{corollary}[theorem]{Corollary}
\newtheorem{example}[theorem]{Example}
\newtheorem*{example*}{Example}
\newtheorem*{dfn*}{Definition}
\newtheorem*{alg*}{Algorithm}
\theoremstyle{remark}
\newtheorem{remark}{Remark}[section]
\begin{document}

\title[Shape Constrained Regularisation]{Shape Constrained Regularisation by
Statistical Multiresolution for Inverse Problems: Asymptotic Analysis}

\author{Klaus Frick}  
\address{Institute for Mathematical Stochastics\\
University of G{\"o}ttingen\\
Goldschmidtstra{\ss}e 7, 37077 G{\"o}ttingen}
\email{frick@math.uni-goettingen.de}
\thanks{Correspondence to frick@math.uni-goettingen.de }

\author{Philipp Marnitz}
\address{Institute for Mathematical Stochastics\\
University of G{\"o}ttingen\\
Goldschmidtstra{\ss}e 7, 37077 G{\"o}ttingen}
\email{stochastik@math.uni-goettingen.de}

\author{Axel Munk}
\address{Institute for Mathematical Stochastics\\
University of G{\"o}ttingen\\
Goldschmidtstra{\ss}e 7, 37077 G{\"o}ttingen\\
and}
\address{Max Planck Institute for Biophysical Chemistry \\
Am Fa{\ss}berg 11, 37077 G{\"o}ttingen} 
\email{munk@math.uni-goettingen.de}

\keywords{Statistical Inverse Problems; Multiresolution; Extreme-Value
Statistics; Shape Constrained Regularisation; Bregman-divergence.}
 
\subjclass{62G05 (estimation), 49N45 (inverse problems)}
 
\begin{abstract}
This paper is concerned with a novel regularisation technique for solving linear
ill-posed operator equations in Hilbert spaces from data that is corrupted by
white noise. We combine convex penalty functionals with extreme-value
statistics of projections of the residuals on a given set of sub-spaces in the
image-space of the operator. We prove general consistency and convergence rate
results in the framework of Bregman-divergences which allows for a vast range of
penalty functionals.

Various examples that indicate the applicability of our approach will be
discussed.  We will illustrate in the context of signal and image
processing that the presented method constitutes a locally adaptive
reconstruction method.
\end{abstract}
 
\maketitle

\section{Introduction}\label{intro} 

In this paper, we are concerned with the solution of the equation
\begin{equation}\label{intro:lineqn}
  Ku = g,
\end{equation}
where $K:U\ra V$ is a linear and bounded operator mapping between two
Hilbert-spaces $U$ and $V$. Equations of type \eqref{intro:lineqn} are  called
\emph{well-posed} if for given $g\in V$ there exists a unique solution 
$u^\dagger\in U$ that depends continuously on the right-hand side $g$.  If one
of these conditions is not satisfied, the problem is called \emph{ill-posed}. In  the
case of ill-posedness, arbitrary small deviations in the right hand side $g$
may lead to useless solutions $u$ (if solutions exist). These
deviations are commonly modelled as random. They are due to
indispensable numerical errors as well as to the random nature of the
measurement  process itself. \emph{(Statistical) regularisation methods} aim
at computing stable approximations of true solutions $u$
from a (statistically)  perturbed signal $g$.
 
In this paper we assume that  we are given the observation
\begin{equation}\label{intro:noisemodel}
  Y = Ku^\dagger + \sigma \eps.
\end{equation}
Here, $\sigma>0$ denotes the noise-level and $\eps:V\ra \Ls{2}(X,
\mathfrak{A}, \Prob)$ a Gaussian white noise process, i.e. $\eps$ is linear and
continuous and for all $v,w \in V$ one has
\begin{equation*}
  \eps(v) \sim \mathcal{N}(0,\norm{v}^2)\quad\text{ and }\quad
  \Cov{\eps(v)}{\eps(w)} = \inner{v}{w},
\end{equation*}    
where $\mathcal{N}(\mu, \sigma^2)$ denotes the normal distribuion with
expectation $\mu$ and variance $\sigma^2$. The white noise model
\eqref{intro:noisemodel} is very common in the theory of statistical inverse
problems \citep[see e.g.][]{ frick:BisHohMunRuy07, frick:Cav08, frick:CavTsy02,
frick:ChoIbrKha99, frick:CohHofRei04, frick:JohSil91, frick:NusPer99}
and it can be regarded as reasonable approximation to models relevant for many areas of
applications.  A statistical regularisation method amounts to compute an
estimator $\hat u = \hat u (\sigma)$ given the data $Y$
 in \eqref{intro:noisemodel} such that $\hat u(\sigma)\ra u^\dagger$ (in an
 appropriate sense) as $\sigma \ra 0^+$ .
 
The simplest case covered by Model \eqref{intro:noisemodel} is classical
nonparametric regression and its amplitude of applications. Here, $U$ and $V$
are suitable function spaces where it is assumed that $U$ can be continuously
embedded into $V$. $U$ models the smoothness of the true signal $u^\dagger$ and
$K$ is the embedding operator $K:U\hookrightarrow V$ (cf.
\cite{frick:BisHohMunRuy07}). More sophisticated examples for $K$ arise in
imaging, when blurring induced by the recording optical systems is modelled as a
convolution with a kernel $k(x-y)$. Beyond convolution, different operators $K$
occur in various other applications (see e.g. \cite{frick:BerBoc98,
frick:EHN96,frick:SchGraGroHalLen09}).

Due to the broad area of applications, the literature on statistical
regularisation methods is vast. We only give a few, selective references:
penalised least-squares estimation  (that includes Tikohonov-Philipps and
maximum entropy regularisation) \citep{frick:BisHohMun04, frick:Sul86,
frick:Wah77}, wavelet based methods \citep{frick:Don93, frick:Don95,
frick:Joh99, frick:JohKerPicRai04, frick:KerKyrPenPet10}, estimation in
Hilbert-scales \citep{frick:BisHohMunRuy07, frick:GolPer03, frick:MaiRuy96,
frick:MatPer01, frick:MatPer03, frick:MatPer03a} and regularisation by
projection \citep{frick:CavGolPicTsy02, frick:CavTsy02, frick:CohHofRei04, frick:HofRei08,
frick:MatPer01} to name but a few.
    
In this work, we follow a different route and study a variational estimation
scheme that defines estimators $\hat u$ as solutions of
\begin{equation}\label{intro:opt}      
  \inf_{u\in U} J(u)\quad \text{ subject to }\quad  T_N(\sigma^{-1}(Y - Ku))
  \leq q_N(\alpha).
\end{equation}  
Here, $J$ is a \emph{convex regularisation functional} that is supposed to
measures the regularity of candidate estimators $u\in U$ and $T_N$ is a
\emph{data fidelity term} on $V$ that measures the deviation of the data $Y$
and the estimated image $Ku$. In this work we consider fidelity measures $T_N$
of the form 
\begin{equation}\label{intro:mrstatrough} 
T_N(v) = \max_{1\leq n\leq N} \mu_n(v),\quad \text{ for }v \in V.
\end{equation}
The functions $\mu_n:V\ra \R$ are designed to be sensitive to
non-random structures in $v$. We will refer to 
\eqref{intro:mrstatrough} as \emph{multiresolution statistic (MR-statistic)}
and to corresponding solutions of the optimisation problem \eqref{intro:opt} as
\emph{statistical multiresolution estimators (SMRE)}. 

The parameter $q_N(\alpha)$ in \eqref{intro:opt} is chosen to be the
$(1-\alpha)$-quantile of the statistic $T_N(\eps)$ and governs the trade-off
between data-fit and regularity. Hence the
\emph{admissible region} 
\begin{equation}\label{intro:admis}
  \mathcal{A}_N(\alpha) = \set{u \in U ~:~ T_N(\sigma^{-1}(Y
  - Ku))\leq q_N(\alpha)}
\end{equation}
constitutes a $(1-\alpha)$-confidence region for a solution $\hat u$ of
\eqref{intro:opt}, i.e. a region which covers the true solution $u^\dagger$ with
probability $1-\alpha$ at least. This gives the estimation procedure
\eqref{intro:opt} a precise statistical interpretation: Since for each solution
$u^\dagger$ of \eqref{intro:lineqn} one has $u^\dagger \in
\mathcal{A}_N(\alpha)$ with probability at least $1-\alpha$ it follows from \eqref{intro:opt} that
\begin{equation*}
  \Prob\left( J(\hat u) \leq J(u^\dagger) \right) \geq 1-\alpha.
\end{equation*}
Summarizing, regularisation methods of type \eqref{intro:opt} pick among all
estimators $\hat u$ for which the distance between $K\hat u$ and the data $Y$
does not exceed the threshold value $q_N(\alpha)$ one with largest regularity.
The probability that this particular estimator is more regular than any solution of
\eqref{intro:lineqn} is bounded from below by $1-\alpha$ . This is in contrast
to many other regularisation techniques where regularisation parameters merely
govern the trade-off between fit-to-data and smoothness and do not allow such an
interpretation. (In the case of wavelet-thresholding, this property was studied
in \cite{frick:Don95a})

Whereas most of the literature is concerned with the proper choice of the
regularisation functional $J$, in this work we will discuss the issue of the
data fidelity term $T_N$. We claim that from a statistical perspective the
choice of $T_N$ is of equal importance as the choice of $J$. 

In Definition \ref{gen:mrstat} below we will delimit a class of feasible
functions for  $\mu_1,\ldots,\mu_N$ in \eqref{intro:mrstatrough}. However, in
order to make ideas clear (and also to justify the notion ``multiresolution''), we
will start with a simple, yet illustrative example: Let $G\subset [0,1]^d$ be
the equi-spaced grid of points in the unit cube and assume that $V$ consists of
all real valued functions $v:G\ra\R$. Moreover, let $\set{S_1,S_2,\ldots,S_N}$
be a sequence of non-empty subsets of $G$. We define for $n\in\N$ and $v\in V$
the \emph{local average function} $\mu_n(v) = \abs{\sum_{\nu\in V} v_\nu}\slash
\sqrt{\#{S_n}}$, where $\#{S_n}$ denotes the number of grid-points in $S_n$.
Thus, the MR-statistic $T_N$ reads as
\begin{equation}\label{intro:mrstatspecial}
T_N(\sigma^{-1}(Y-Ku)) =  \max_{1\leq n\leq N}\frac{1}{\sqrt{\#{S_n}}
}\abs{\sum_{\nu\in S_n} \sigma^{-1}(Y-Ku)_\nu }.
\end{equation}
In other words,  the statistic $T_N$ returns the largest local average of the
residuals $\sigma^{-1}(Y-Ku)$ over the sets $S_1,\ldots,S_N$.  Under the
hypothesis that $u^\dagger$ is the true solution of \eqref{intro:lineqn}, we
have that $T_N(\sigma^{-1}(Y-Ku^\dagger)) = T_N(\eps)$ does not exceed the
threshold $q_N(\alpha)$ with probability $1-\alpha$ at least. Recall that
$\eps$ is a white noise process and hence ``oscillates around zero'' as an effect of which
the quantile values $q_N(\alpha)$ are relatively small due to cancellations in
the sums in \eqref{intro:mrstatspecial}. If, however, $u$ is wrongly specified the
residual $Y - Ku$  contains a non-random signal which may happen to be covered
by a set $S_{n_0}$. As an effect the local average over $S_{n_0}$ - and thus
also the statistic $T_N(\sigma^{-1}(Y-Ku))$ - becomes relatively large and $u$
lies outside the admissible domain of the optimisation problem
\eqref{intro:opt}.

The choice of the system $\set{S_1,\ldots,S_N }$ is subtle, since it should not
miss any non-random information in the residual, if present. Put differently, it
encodes a priori information on where one expects to encounter non-random
behavior in the residuals of \emph{any possible} estimator $\hat u$.  Thus,
$T_N$ would be most sensible against a large variety of signals $\hat u$, if we employ
a large number $N$ of overlapping sets $S_n$ that cover $G$. This approach,
however, turns \eqref{intro:opt} into an optimisation problem with a huge number
of constraints which is hard to tackle numerically (this is treated separately
in \cite{frick:FriMarMun11}). Besides these numerical difficulties, there is
also a statistical limitation which will be a major issue  to be discussed in
this paper: If the dictionary $\set{S_1,\ldots,S_N }$ is too large (in the sense
of a metric entropy), the asymptotic distribution of $T_N$ will degenerate.  In
practical situations, a priori knowledge on the true solution of \eqref{intro:lineqn} can be used in order to
design dictionaries whose  entropy guarantees a non-degenerate limit of $T_N$
and in addition allows to derive rates of convergence of the SMRE to the true
signal. A similar comment applies to the choice of the regularisation functional
$J$  which models a priori information on the regularity of the true solution.

As a consequence, the MR-statistic $T_N$ plugged in into \eqref{intro:opt} plays
the role of a \emph{shape constraint} and the resulting estimation method is
capable of adapting the amount of regularization in a \emph{locally adaptive}
manner. Put differently, our approach offers a general methodology to
\emph{localise} any global convex regularisation functional in order to obtain
spatial adaption. This is in contrast to global data fidelity terms such as the
widely used squared $2$-norm fidelity (or any other $p$-norm, $p\geq 1$ for that
matter) that do not allow for adaptation to local structures. This is
illustrated in the following example:

\begin{example}\label{intro:ex} 
Assume that $U = V = \R^n$  with
$n=1024$ and let $K:U\ra V$ be the identity
operator, i.e. \eqref{intro:noisemodel} can be rewritten into the simple
nonparametric regression model
\begin{equation*}
Y_i = u_i^\dagger + \sigma \eps_i,\quad i=1,\ldots,n
\end{equation*}
with $\eps_1,\ldots,\eps_n$ i.i.d. standard normal random variables. 
The signal $u^\dagger \in U$ and the data $Y$ according to
\eqref{intro:noisemodel} with $\sigma = 0.05$ are depicted in Figure
\ref{intro:figdata}. 

\begin{figure}[h!]
\begin{center}
\includegraphics[width = 0.27\textwidth]{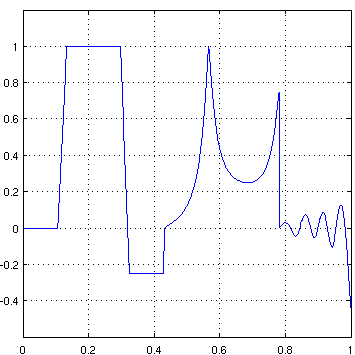}
\hspace{0.02\textwidth}
\includegraphics[width = 0.27\textwidth]{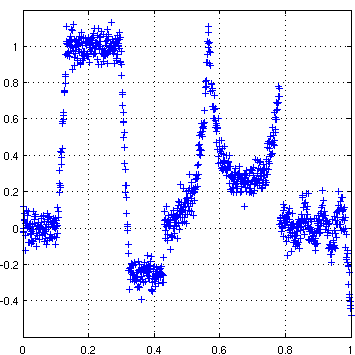}
\caption{True signal $u^\dagger$ (left) and data $Y$
(right).}\label{intro:figdata}
\end{center}
\end{figure}

The signal $u^\dagger$ exhibits kinks, jumps, peaks and smooth
portions simultaneously which makes estimation a delicate matter. For
example, the regularisation functional
\begin{equation}\label{intro:h1penalty}
J(u) = \frac{1}{n}\sum_{k=1}^{n-1}\abs{u_{k+1} - u_k}^2.
\end{equation}
appears to be well suited to recover at least the smooth parts of the signal,
however, with a tendency to ``smear out'' edges, peaks and kinks. In the
following we will show how this deficiency can be repaired by \emph{localising}
$J$ by means of MR-statistics. To this end we will compute SMREs, solutions of
\eqref{intro:opt} that is, with $J$ as in \eqref{intro:h1penalty}. 

Before we do so, we start with reconstructing $u^\dagger$ by the usual
``global'' approach for the purpose of comparison. We compute a $J$-penalized
least squares estimator $\hat u_2$, i.e. the solution of
\begin{equation}\label{intro:penleastsquare}
\min_{u\in \R^n}\frac{1}{n} \sum_{l=1}^n \abs{Y_l - u_l}^2 +
\frac{\lambda}{n} \sum_{l=1}^{n-1}\abs{u_{l+1} - u_l}^2.
\end{equation} 
Here, the proper selection of smoothness amounts to a proper choice of the
parameter $\lambda > 0$. It is instructive to rewrite
\eqref{intro:penleastsquare} in a slightly different form, such that the
relationship to \eqref{intro:opt} becomes obvious: To each $\lambda > 0$ there
corresponds a threshold value $q = q(\lambda)$, such that $\hat u_2$ is a
solution of
\begin{equation}\label{intro:penleastsquarealt}
\min_{u\in\R^n}\frac{1}{n}\sum_{k=1}^{n-1}\abs{u_{k+1} - u_k}^2\quad\text{ s.t.
}\quad \frac{1}{n}\sum_{l=1}^n \abs{Y_l - u_l}^2 \leq q. 
\end{equation}
The first
three panels in the upper row of Figure \ref{intro:figsmre} depict solutions
$\hat u_2$ for $q=25,43$ and $50$. The choice $q=43$ yields the visually best
result, however it becomes immediately clear that there are under- and
oversmoothed parts in the reconstruction. The latter becomes undeniably visible
in the qq-plot of the residual $Y - \hat u_2$ (lower row) which indicates that
there is a significant amount of outliers. Note, that less oversmoothing,
i.e. fewer outliers in the residuals, can only be achieved at the cost of more
artefacts in the reconstruction (by decreasing $q$) and vice versa fewer
artefacts only by accepting severe oversmoothing (by increasing $q$). This is due to the fact that
each residual value $Y_l - u_l$ ($1\leq l\leq n$) contributes equally to
the quadratic fidelity in \eqref{intro:penleastsquare} (or likewise in
\eqref{intro:penleastsquarealt}) \emph{independent of its spatial position}.


\begin{figure}[h!]
\begin{center}
\includegraphics[width = 0.242\textwidth]{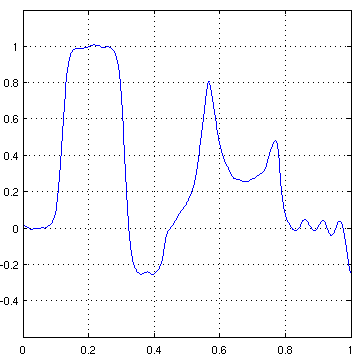}
\includegraphics[width = 0.242\textwidth]{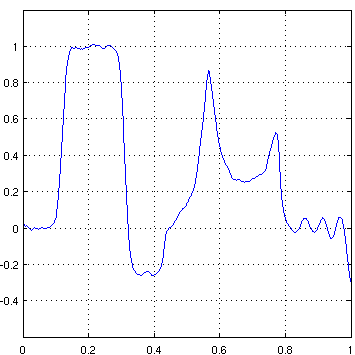}
\includegraphics[width = 0.242\textwidth]{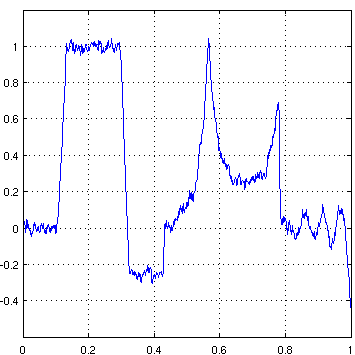}
\includegraphics[width = 0.242\textwidth]{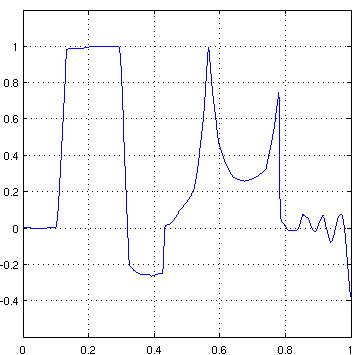}

\includegraphics[width = 0.242\textwidth]{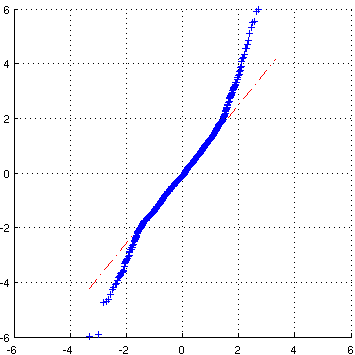}
\includegraphics[width = 0.242\textwidth]{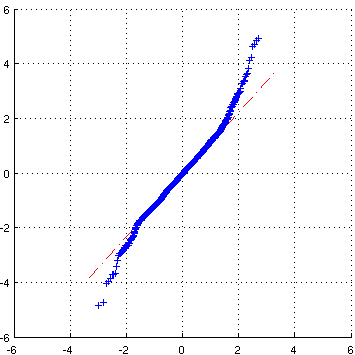}
\includegraphics[width = 0.242\textwidth]{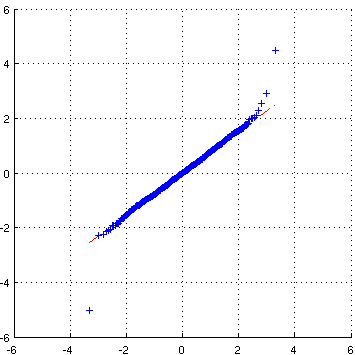}
\includegraphics[width = 0.242\textwidth]{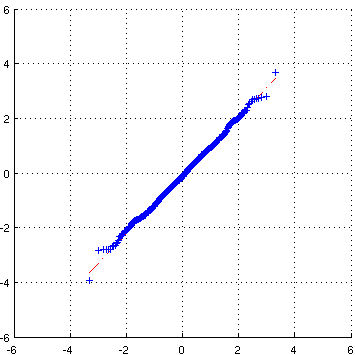}
\caption{Upper row: Global estimators $\hat u_2$ for $q=25, 43$ and $50$
and SMRE $\hat u_\text{SMRE}$. Lower row: Corresponding qq-plots of the
standardised residuals against standard normal}.\label{intro:figsmre}
\end{center}
\end{figure}

To overcome this obvious ``lack of locality'' , we compute solutions of
\eqref{intro:opt} where we employ the MR-statistic in
\eqref{intro:mrstatspecial} as fidelity measure. To be more precise, we choose
the sets $\set{S_1,\ldots,S_N}$ to consist of all discrete intervals of the type
$\set{i,\ldots,j}\slash n$ with $1\leq i<j \leq n$ and $j - i \leq 20$ (i.e. $N
= 20.290$). Put differently, the SMRE $\hat u_\text{SMRE}$ is a solution of the
convex optimisation problem
\begin{equation*}
\min_{u\in\R^n}\frac{1}{n}\sum_{i=1}^{n-1}\abs{u_{k+1} - u_k}^2\quad\text{ s.t.
}\quad \max_{\substack{1\leq i < j\leq n \\ j-i \leq 20}} \frac{1}{\sqrt{j-i+1}}
\abs{\sum_{l=i}^j Y_l - u_l}\leq q. 
\end{equation*}
For the computation of $\hat u_\text{SMRE}$  in the rightmost panel of Figure
\ref{intro:figsmre} we set $q = q_N(\alpha)=2.9$ which corresponds to a small
value of $1-\alpha\approx 0.01$ in order to avoid oversmoothing. The value of
$\alpha$ was determined by simulations of the statistic $T_N(\eps)$.  Indeed,
the result is visually appealing: The kinks, jumps and peaks are strikingly well
recovered,  both in location and height and the smooth parts of the signal
exhibit no artefacts. Also the corresponding qq-plot confirms  that there are
hardly any outliers in the residuals $Y - \hat u_\text{SMRE}$,  which indicates
that oversmoothing is limited to a reasonable amount. Again, this is all the
more remarkable as the regularisation functional $J$ is known to usually blur
edges, peaks and kinks.

Summarising, it becomes evident that the SMRE approach outperforms the standard
method that employs the global quadratic fidelity. In particular, this example
shows that plugging in the MR-statistic $T_N$ into \eqref{intro:opt} results in
an estimation scheme that regularises in a \emph{locally adaptive}
manner. Aside to the specific choice \eqref{intro:h1penalty} any other convex
regularisation functional $J$ can be ``localised'' in this way, of course, as
for example the total variation semi-norm
\begin{equation*}
J(u) = \frac{1}{n}\sum_{k=1}^{n-1}\abs{u_{k+1} - u_k}.
\end{equation*}
It has turned out, however, that for the present example \eqref{intro:h1penalty}
is preferable since it accounts well for the smooth parts in the signal, whereas
it is well known and also visible that the total variation penalty induces an
undesired ``staircasing'' effect. This is illustrated in Figure \ref{intro:figsmre_tv}, where global estimators $\hat u_2$
and the SMRE $\hat u_{\text{SMRE}}$ are depicted. 

\begin{figure}[h!]
\begin{center}
\includegraphics[width = 0.242\textwidth]{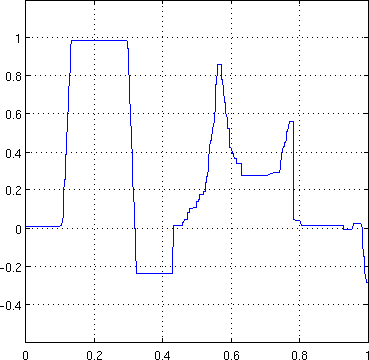}
\includegraphics[width = 0.242\textwidth]{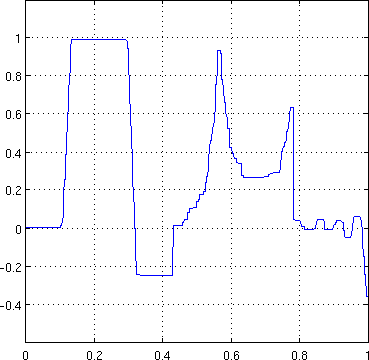}
\includegraphics[width = 0.242\textwidth]{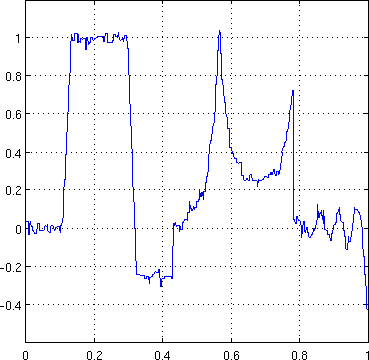}
\includegraphics[width = 0.242\textwidth]{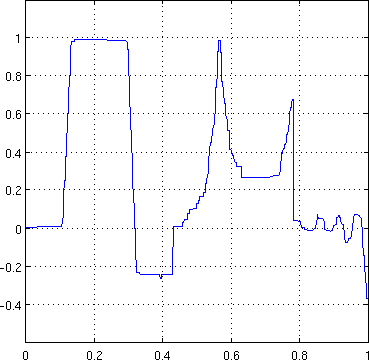}
 
\caption{Global estimators $\hat u_2$ for $q=29, 33$ and $36$
and SMRE $\hat u_\text{SMRE}$ w.r.t. the total variation
penalty.}.\label{intro:figsmre_tv}
\end{center}
\end{figure}

We finally remark, that all estimators in this example were computed by a
alternating direction method of multipliers (ADMM) as developed in
\cite{frick:FriMarMun11} and its details will not be discussed here. In
\cite{frick:FriMarMun11} also simulation studies are performed giving
quantitative evidence of the good performance of our method (see also Examples
\ref{tv:ex_denois} and \ref{tv:ex_deconv}).
\end{example} 

The regularisation scheme \eqref{intro:opt} with the MR-statistic $T_N$ as in
\eqref{intro:mrstatspecial}  was studied in
\cite{frick:DavKovMei09} for the specific case  of non-para\-metric regression
in one space dimension and the total-variation  semi-norm as regularisation
functional $J$.  In this paper we will show that the general formulation in
\eqref{intro:opt} reveals the SMRE as a powerful regularisation method far
beyond this situation: It can be extended to space dimensions larger than one as
well as to inverse problems with general $K$ as in \eqref{intro:noisemodel}
including deconvolution problems. Furthermore, we present very general
consistency and convergence rates results for SMRE  in the context of statistical inverse problems and discuss
their impact on particular applications. To our best knowledge, results of this
type have never been obtained before. It is necessary to assume additional
regularity of the true solution of \eqref{intro:lineqn} in order to come up with
convergence rates results. In the context of inverse problems, this is usually
done by imposing  \emph{source conditions}. These determine
smoothness classes of solutions for \eqref{intro:lineqn} that guarantee risk bounds and
fast convergence of the estimator to the true signal. In this work we study the
standard source condition \cite{frick:BO04} used in the framework of
Bregman-divergences that yield for each penalty functional $J$ in \eqref{intro:opt} \emph{one} specific
smoothness class.  The formulation of conditions that give optimal convergence
rates in a \emph{scale} of smoothness classes for a general but \emph{fixed} $J$
will not be treated in this work (cf. \cite{frick:Fle11} and references
therein).
 
This paper is organised as follows. After reviewing some basic definitions from
convex analysis and the theory of inverse problems in Section \ref{defs} we
develop in Section \ref{gen:multiressec} a general scheme for estimation
for the statistical inverse problem \eqref{intro:noisemodel} based on the convex
optimisation problem
\eqref{intro:opt}. In Section \ref{gen:conssec}
we then prove consistency and convergence rate results in terms of the
Bregman-divergence w.r.t. the regularisation functional $J$. In Section
\ref{simple} we study the performance of the so constructed estimators for
various examples, as the Gaussian sequence model (Section \ref{simple:indep})
and linear inverse regression problems (Section \ref{lutz}). In Section \ref{tv}
we investigate the particular situation when the regularisation functional $J$
is chosen to be the total-variation semi-norm, which has a particular appeal for
imaging problems. Finally, some examples that illustrate the notions of
source-condition and Bregman-divergence are given in Appendix \ref{app:breg} and
the proofs of the main results as well as some auxiliary lemmata are collected
in Appendix \ref{app}.

\section{Basic Definitions}\label{defs}

In this section we summarise some relevant definitions and assumptions needed
throughout the paper. 
 
\begin{assumption}\label{defs:basicass} 
\begin{enumerate}[(i)]
  \item $U$ and $V$ denote separable Hilbert spaces. The norms on
  $U$ and $V$ are not further
  specified, and will be always denoted by $\norm{\cdot}$, since the
  meaning is clear from the context.
  \item Let $J:U \ra \Rb$ be a convex functional from $U$ into the extended
  real numbers $\Rb=\R \cup \set{\infty}$.
  The domain of $J$ is defined by
  \begin{equation*}
    D(J) = \set{ u \in U : J(u) \neq \infty}.
  \end{equation*}
  $J$ is called \emph{proper} if $D(J) \neq \emptyset$ and $J(u) >
  -\infty$ for all $u\in U$. Throughout this paper $J$ denotes a convex,
  proper and lower semi-continuous (l.s.c.) functional with dense domain $D(J)$.
  \item $K : U \to V$ is a linear and bounded operator. By
  $\ran(K) = K(U)$ we denote the range of $K$ and by $K^*:V \ra U$ the adjoint
  operator of $K$.
\end{enumerate}
\end{assumption}
In the course of this paper we will frequently make use of tools from convex
analysis. For a standard reference see \cite{frick:ET76}.
\begin{itemize}
  \item The sub-differential (or generalised derivative)  $\partial J(u)$ of $J$
  at $u$ is the set of all elements $p \in U$ satisfying
  \begin{equation*}
    J(v) - J(u) - \inner{p}{v-u} \geq 0\quad\text{ for all }v\in U.
  \end{equation*}
  The \emph{domain} $D(\partial J)$ of the sub-differential consists of all
  $u\in U$ for which $\partial J (u) \neq \emptyset$.
  
  \item We will prove consistency of estimators with respect to the
  \emph{Bregman-divergence}. For $u \in D(J)$ the Bregman-divergence of $J$
  between $u$ and $v$ is defined by
  \begin{equation*}
    D_J(v,u) = J(v) - J(u) - J'(v)(v-u)
  \end{equation*}
  where $J'(v)(v-u)$ denotes the directional derivative of $J$ at $v$ in
  direction $v-u$. The directional derivative is defined as
  \begin{equation*}
    J'(v)(w) = \lim_{h\ra 0^+}\frac{J(v+hw) - J(v)}{h}.
  \end{equation*}
  and is well defined for convex functions (possibly with values in $[-\infty,
  \infty]$).
  \item For $u \in D(\partial J)$ the  Bregman-divergence of $J$ between
  $u$ and $v$ w.r.t. $\xi \in \partial J(u)$ is defined as
  \begin{equation*}
    D_J^\xi(v,u) = J(v) - J(u) - \inner{\xi}{v-u}.
  \end{equation*}
  The following basic estimates hold
  \begin{equation*}
    0\leq D_J(v,u) \leq D_J^\xi(v,u),\quad \text{ for all }\xi\in\partial J(u).
  \end{equation*}
\end{itemize}
    
\begin{remark}\label{defs:bregrem}
Clearly, the Bregman-divergence does not define a (quasi-)metric on $U$: It is
non-negative but in general it is neither symmetric nor satisfies the triangle
inequality. The big advantage, however, of formalising asymptotic results
w.r.t. to the Bregman-divergence (such as consistency or convergence rates) for
estimators defined by a variational scheme of type \eqref{intro:opt}, is the
fact, that the regularising properties of the used penalty functional $J$ are
incorporated automatically. If, for example, the functional $J$ is slightly
more  than strictly convex, it was shown in \cite{frick:R04} that convergence
w.r.t. the Bregman-divergence already implies convergence in norm. If, however,
$J$ fails to be strictly convex (e.g. if it is of linear growth) it is in
general hard to establish norm-convergence results but convergence results
w.r.t. the Bregman-divergence, though weaker, may still be at hand.  In 
the Appendix \ref{app:breg} we compute the Bregman-divergence for some particular
choices of $J$.

The concept of Bregman-divergence in optimisation was introduced in
\cite{frick:Bre67} and has recently attracted much attention e.g. in the
inverse problems community \citep[see][]{frick:BO04,frick:Csi91,frick:FriSch10}
or in statistics and machine learning \citep{frick:ColSchSin02,
frick:LafPiePie97, frick:ZhaYu05}.
\end{remark}

Next, we introduce different classes of solutions for Equation
\eqref{intro:lineqn} discussed in this paper.

\begin{definition}\label{defs:solution}
\begin{enumerate}[(i)]
  \item Let $u \in D(J)$ be a solution of \eqref{intro:lineqn}. Then $g$ is
  called \emph{attainable}.
  \item An element $u \in D(J)$ is called \emph{$J$-minimising solution} of
  \eqref{intro:lineqn}, if $u$ solves \eqref{intro:lineqn} and
  \begin{equation*}
    J(u) = \inf \set{ J(\tilde u) ~:~ K\tilde u = g}.
  \end{equation*}
  \item Let $g\in V$ be attainable. An element $p\in V$ is called a
  \emph{source element} if there exists a $J$-minimising solution $u$ of
  \eqref{intro:lineqn}  such that
  \begin{equation}\label{defs:solution:source}
    K^*p \in \partial J(u).
  \end{equation}
  Then, we say that $u$ \emph{satisfies the source condition}
  \eqref{defs:solution:source}.
\end{enumerate}
\end{definition}

It is well-known in the theory of inverse problems with deterministic noise 
\citep[see][]{frick:EHN96} that the source condition
\eqref{defs:solution:source} is sufficient for establishing convergence rates for regularisation methods. It can
be understood as a regularity condition for $J$-minimising solutions of Equation
\eqref{intro:lineqn}. Put differently, for each regularisation functional $J$
and each operator $K$, the source condition \eqref{defs:solution:source}
characterises \emph{one particular} smoothness-class of solutions for \eqref{intro:lineqn} for which fast
reconstruction is guaranteed. We clarify the notions
\emph{Bregman-divergence} and \emph{source condition} by some examples in
Appendix \ref{app:breg}. 

Under fairly general conditions existence of $J$ minimising solution can be
guaranteed. We formalise these conditions in the following result, however, we
omit the proof since it is standard in convex analysis \citep[see][Chap. II
Prop. 2.1]{frick:ET76}.

\begin{proposition}\label{defs:exjmin}
Let $g\in V$ be attainable and assume that for all $c\in \R$ the sets
\begin{equation}\label{defs:comset}
  \set{ u \in U ~:~ \norm{Ku} + J(u) \leq c}
\end{equation}
are bounded in $U$. Then, there exist a $J$-minimising solution of
\eqref{intro:lineqn}.
\end{proposition}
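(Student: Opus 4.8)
The plan is to use the direct method of the calculus of variations, exactly as in \cite{frick:ET76}. Let $g\in V$ be attainable, so that the set $S=\set{u\in U:Ku=g}\cap D(J)$ is non-empty, and put $m=\inf\set{J(u):u\in S}$. Since $J$ is proper, $m<\infty$, and since $J$ is bounded below on sublevel sets of the form \eqref{defs:comset} (a bounded set on which a proper l.s.c.\ convex function that is $>-\infty$ everywhere must be bounded below, by the standard separation argument), we have $m>-\infty$. Pick a minimising sequence $(u_k)\subset S$ with $J(u_k)\to m$.

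First I would show $(u_k)$ is bounded in $U$. For $k$ large, $J(u_k)\leq m+1$, and $\norm{Ku_k}=\norm{g}$ is constant; hence $\norm{Ku_k}+J(u_k)\leq\norm{g}+m+1=:c$, so $(u_k)$ lies in the set \eqref{defs:comset} for this $c$, which is bounded by hypothesis. Since $U$ is a Hilbert space, after passing to a subsequence we may assume $u_k\rightharpoonup \bar u$ weakly in $U$. Next I would check feasibility of the limit: $K$ is linear and bounded, hence weakly continuous, so $Ku_k\rightharpoonup K\bar u$; but $Ku_k=g$ for all $k$, so $K\bar u=g$. Finally, a convex, proper, l.s.c.\ functional on a Hilbert space is weakly sequentially l.s.c., so $J(\bar u)\leq\liminf_k J(u_k)=m$. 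In particular $J(\bar u)<\infty$, so $\bar u\in D(J)$ and $\bar u\in S$, whence $J(\bar u)\geq m$ as well. Therefore $J(\bar u)=m$ and $\bar u$ is a $J$-minimising solution of \eqref{intro:lineqn}.

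The only genuinely delicate points are the two lower-semicontinuity facts invoked above, neither of which is hard: that an l.s.c.\ proper convex function which is nowhere $-\infty$ is bounded below on bounded sets (it has a continuous affine minorant by Hahn--Banach, since its epigraph is closed and convex), and that norm-l.s.c.\ convexity upgrades to weak-sequential l.s.c.\ (Mazur's lemma). Both are textbook; see \cite[Chap.~II, Prop.~2.1]{frick:ET76}. I expect no real obstacle here, which is precisely why the paper defers the proof.
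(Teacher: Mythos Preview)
Your argument is correct and is exactly the standard direct-method proof the paper has in mind; indeed the paper omits the proof entirely and simply refers to \cite[Chap.~II, Prop.~2.1]{frick:ET76}, which is precisely the compactness/weak-l.s.c.\ argument you spell out. There is nothing to add.
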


\section{A General Scheme for Estimation}\label{gen}

In this section we construct a family of estimators $\hat u$ for $J$-minimising
solutions (cf. Definition \ref{defs:solution}) of Equation \eqref{intro:lineqn}
from noisy data $Y$ given by the white noise model   \eqref{intro:noisemodel}.
We define the estimators in a variational framework  and prove consistency as
well as convergence rates in terms of the Bregman-divergence w.r.t. $J$. 

\subsection{MR-Statistic and
SMR-Estimation}\label{gen:multiressec}

We introduce a class of similarity measures in order to
determine whether the residuals $Y - K\hat u$ for a given estimator $\hat u \in
U$ resemble a white noise process or not. To this end we will consider the
extreme-value distribution of projections of the residuals onto a predefined collection of
lines in $V$.  To this end, assume that
\begin{equation*}
  \Phi = \set{\phi_1,\phi_2,\ldots}\subset \overline{\ran(K)}\backslash\set{0}
\end{equation*}
is a fixed dictionary such that $\norm{\phi_n}\leq 1$ for all
$n\in\N$. For the sake of simplicity, we will frequently make use of the
abbreviation $\phi^*_n = \phi_n \slash \norm{\phi_n}$.

\begin{definition}\label{gen:mrstat}
Let $\set{t_N:\R^+\times (0,1] \ra \R}_{N\in¸\N}$ be a sequence of functions
that satisfy the following conditions
\begin{enumerate}[(i)]
  \item For all $r\in (0,1]$, the function $s\mapsto t_N(s,r)$ is convex,
  increasing and Lipschitz-continuous with Lipschitz-constants $L_{Nr}$ such
  that $ L_{Nr} \leq 
  L<\infty$ for all $N\in\N$ and
  \begin{equation}\label{gen:lowbnd}
    0 > \lambda_N(r):= \inf_{s\in \R^+} t_N(s,r) > -\infty.
  \end{equation}
  \item There exist constants $c_1, c_2 > 0$ and $\sigma_0 \in (0,1)$ such
  that for all $0<\sigma < \sigma_0$
  \begin{equation}\label{gen:ineq}
    t_N(s,r)\geq c_1 s + c_2 t_N(\sigma s, r)\quad\text{ for } (s,r)\in
    \R^+\times (0,1]\text{ and }N\in\N.
  \end{equation}
\end{enumerate}
Then, for $N\in\N$, the mapping $T_N: V \ra \R$ defined by
\begin{equation*}
  T_N(v) = \max_{1\leq n\leq N} t_N\left(\abs{\inner{v}{\phi_n^*}},
  \norm{\phi_n}\right)
\end{equation*}
is called a \emph{multiresolution statistic (MR-statistic)}.
\end{definition}

\begin{remark}\label{gen:remmean}
Let $\set{t_N}_{N\in\N}$ be a sequence of functions satisfying i) and ii) in
Definition \ref{gen:mrstat}. For a fixed $N\in\N$ the mappings $\mu_n:V\ra \R$
defined by
\begin{equation*}
\mu_n(v) = t_N(\abs{\inner{v}{\phi_n^*}}, \norm{\phi_n})
\end{equation*}
can be interpreted as the \emph{average} of the signal $v$ restricted to the
subspace spanned by $\phi_n^*$. With $\mu_n$ as above, the MR-statistic $T_N(v)$
in Definition \ref{gen:mrstat} takes the form \eqref{intro:mrstatrough} and
hence can be considered to measure the maximal local average of $v$ w.r.t. the
dictionary $\set{\phi_1,\ldots,\phi_N}$.
 \end{remark}

Definition \ref{gen:mrstat} allows for a vast class of MR-statistics  and the
conditions in (i) and (ii) appear rather technical. The
following example sheds some light on a special class of MR-statistics that
later on will be studied in more detail. We note, however, that our general
setting also applies to more involved statistics, as e.g. introduced in
\cite{frick:DueSpo01,frick:DueWal08}.

\begin{example}\label{gen:mrstatex}
  Assume that $\set{f_N:(0,1]\ra \R}_{N\in\N}$ is a sequence of positive
  functions and define
  \begin{equation*}
    t_N(s,r) := s - f_N(r).
  \end{equation*}
  Then, the assumptions in Definition \ref{gen:mrstat} are satisfied; to be
  more precise, we can set $L = 1$, $\lambda_N(r) = -f_N(r)$ and $c_1 =
  1-\sigma_0$ and $c_2 = 1$, where $\sigma_0\in(0,1)$ is arbitrary but fixed.
  Moreover, for a fixed $N\in\N$, the average functions $\mu_n:V\ra\R$ in Remark
  \ref{gen:remmean} read as
  \begin{equation*}
  \mu_n(v) = \abs{\inner{v}{\phi_n^*}} - f_N(\norm{\phi_n}).
  \end{equation*}
\end{example}

For a white noise process $\eps:V\ra \Ls{2}(\Omega, \mathfrak{A}, \Prob)$  and
$N\in \N$, consider the random variable
\begin{equation*}
  T_N(\eps) =  \max_{1\leq n\leq N} t_N\left(\abs{\eps(\phi_n^*)},
  \norm{\phi_n}\right).
\end{equation*}
Then, for a level $\alpha\in (0,1)$ we denote the
$(1-\alpha)$-quantile of $T_N(\eps)$ by $q_N(\alpha)$, that is,
\begin{equation}\label{gen:quantile}
  q_N(\alpha) := \inf\set{q\in\R~:~ \Prob\left( T_N(\eps)\leq q \right) \geq
  1 - \alpha}
\end{equation}

Our key paradigm is that an estimator $\hat u$ for a solution of
\eqref{intro:lineqn} fits the data $Y$ sufficiently well, if the statistic
$T_N(Y-K\hat u)$  does not exceed the threshold $q_N(\alpha)$ ($\alpha\in
(0,1)$ and $N\in\N$ fixed). Among all those estimators we shall pick the
\emph{most parsimonious} by minimising the functional $J$.

\begin{definition}\label{gen:smrest}
Let $N\in\N$ and $\alpha\in (0,1)$. Moreover, assume that $T_N$ is an
MR-statistic  and that $Y$ is given by \eqref{intro:noisemodel}.  Then every
element $\hat u_N(\alpha) \in U$ solving the convex optimisation problem
\eqref{intro:opt} is called a \emph{statistical multiresolution estimator
(SMRE)}.
\end{definition}

An SMRE $\hat u_N(\alpha)$ depends on the regularisation parameters $N\in\N$ and
$\alpha\in (0,1)$ that determine the admissible region $\mathcal{A}_N(\alpha)$
in \eqref{intro:admis}. In order to guarantee existence of a solution of the
convex problem in Definition \ref{gen:smrest}, that is existence of an SMRE,  it
is necessary to impose further standard assumptions:

\begin{assumption}\label{gen:assK}
There exists $N_0\in\N$ such that for all $c\in \R$ the sets
\begin{equation*}
  \Lambda(c) = \set{u\in U~:~ \max_{1\leq n\leq N_0}
  \abs{\inner{Ku}{\phi_n^*}} + J(u) \leq c }
\end{equation*}
are bounded in $U$.
\end{assumption}

Assumption \ref{gen:assK}  guarantees weak
compactness of the level sets of the objective functional $J$ restricted to the
admissible region $\mathcal{A}_N(\alpha)$. We note, that if $J$ is strongly
coercive (e.g. when $J$ is as in Example \ref{defs:norm}) then Assumption \ref{gen:assK} is satisfied without any
restrictions on the operator $K$. If $J$ lacks strong coercivity (as it is e.g. the case with the
total-variation semi-norm studied in Section \ref{tv}) additional properties of
$K$ are required in order to meet Assumption \ref{gen:assK}.

Application of standard arguments from convex optimisation yields

\begin{proposition}\label{gen:smrexist}
Assume that Assumption \ref{gen:assK} holds and let $N \geq
N_0$ and $\alpha\in (0,1]$. Then, an SMRE $\hat u_N(\alpha)$ exists.
\end{proposition}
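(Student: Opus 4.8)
The plan is the standard direct method of the calculus of variations; I describe the steps in order and indicate the one place where the structural hypotheses do real work. I would first fix a realisation of $\eps$ and abbreviate $b_n := \inner{Y}{\phi_n^*}$, which are ($\Prob$-a.s.) finite reals because $\inner{Y}{\phi_n^*} = \inner{Ku^\dagger}{\phi_n^*} + \sigma\eps(\phi_n^*)$ is Gaussian. Then the admissible region is $\mathcal{A}_N(\alpha) = \set{u \in U : \Psi_N(u) \le q_N(\alpha)}$ with
\[ \Psi_N(u) := \max_{1\le n\le N} t_N\bigl(\sigma^{-1}\abs{b_n - \inner{Ku}{\phi_n^*}}, \norm{\phi_n}\bigr). \]
Since each $u \mapsto \inner{Ku}{\phi_n^*}$ is linear and bounded, $\abs{\,\cdot\,}$ is convex, and each $t_N(\,\cdot\,, r)$ is convex, non-decreasing and Lipschitz by Definition \ref{gen:mrstat}(i), every summand — and hence $\Psi_N$ — is convex and continuous; therefore $\mathcal{A}_N(\alpha)$ is convex and norm-closed, so weakly closed in $U$. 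Likewise $J$ is convex and l.s.c.\ by Assumption \ref{defs:basicass}(ii), hence weakly sequentially l.s.c. Finally, when $g$ is attainable the $J$-minimising solution $u^\dagger \in D(J)$ lies in $\mathcal{A}_N(\alpha)$ on an event of probability at least $1-\alpha$ (the confidence-region property discussed in the Introduction); on this event $\mathcal{A}_N(\alpha)\cap D(J)\ne\emptyset$ and $m := \inf_{u\in\mathcal{A}_N(\alpha)} J(u) \le J(u^\dagger) < \infty$, and it is here that the SMRE is to be constructed.

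The one step with genuine content will be to show that a minimising sequence is bounded. I would pick $u_k \in \mathcal{A}_N(\alpha)$ with $J(u_k) \to m$, so $J(u_k) \le m+1$ for $k$ large. Since $N \ge N_0$, the constraint in particular controls the first $N_0$ functionals: $t_N\bigl(\sigma^{-1}\abs{b_n - \inner{Ku_k}{\phi_n^*}}, \norm{\phi_n}\bigr) \le q_N(\alpha)$ for $n \le N_0$. Applying \eqref{gen:ineq} and then the lower bound $t_N(\,\cdot\,, r) \ge \lambda_N(r)$ from \eqref{gen:lowbnd} yields $c_1 \sigma^{-1}\abs{b_n - \inner{Ku_k}{\phi_n^*}} \le q_N(\alpha) - c_2\lambda_N(\norm{\phi_n}) \le q_N(\alpha) + c_2 M$, where $-M := \min_{1\le n\le N_0}\lambda_N(\norm{\phi_n}) > -\infty$. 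Hence $\abs{\inner{Ku_k}{\phi_n^*}}$ is bounded uniformly in $k$ for every $n \le N_0$, so that $\max_{1\le n\le N_0}\abs{\inner{Ku_k}{\phi_n^*}} + J(u_k) \le c$ for a suitable constant $c$; that is, $u_k \in \Lambda(c)$. By Assumption \ref{gen:assK} the set $\Lambda(c)$ is bounded, so $(u_k)_k$ is bounded in $U$.

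Finally, since $U$ is a separable Hilbert space, a subsequence $u_{k_j} \rightharpoonup \hat u$. Weak closedness of $\mathcal{A}_N(\alpha)$ forces $\hat u \in \mathcal{A}_N(\alpha)$, and weak sequential lower semicontinuity of $J$ forces $J(\hat u) \le \liminf_j J(u_{k_j}) = m$; together these give $J(\hat u) = m$ (and in passing $m > -\infty$, as $J$ is proper), so $\hat u =: \hat u_N(\alpha)$ solves \eqref{intro:opt} and is an SMRE. The main obstacle — everything else being the routine compactness-plus-lower-semicontinuity machinery — is precisely the second step: converting the scalar bound $\Psi_N(u_k) \le q_N(\alpha)$ into a bound on $\max_{n\le N_0}\abs{\inner{Ku_k}{\phi_n^*}}$ that is uniform in $k$, so that Assumption \ref{gen:assK} can be invoked. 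This is what the near-linear lower growth \eqref{gen:ineq}, the uniform lower bound \eqref{gen:lowbnd}, and the requirement $N \ge N_0$ are there for.
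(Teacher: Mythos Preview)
Your overall structure --- direct method, weak closedness of $\mathcal{A}_N(\alpha)$, boundedness of a minimising sequence via \eqref{gen:ineq} and \eqref{gen:lowbnd}, weak compactness through Assumption~\ref{gen:assK}, weak lower semicontinuity of $J$ --- matches the paper's proof essentially line for line, and the part you single out as ``the main obstacle'' is indeed where the structural hypotheses do their work.

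There is, however, a genuine gap in your non-emptiness step. You argue that $u^\dagger\in\mathcal{A}_N(\alpha)\cap D(J)$ on the event $\{T_N(\eps)\le q_N(\alpha)\}$ and then restrict the whole construction to that event. But this event has probability only $\ge 1-\alpha$, so your proof establishes existence of an SMRE only with probability $1-\alpha$, not for every (or almost every) realisation. The proposition, as stated and as used later, asserts existence unconditionally; the paper's proof in fact works for \emph{arbitrary} data $y\in V$, with no reference to $u^\dagger$ at all.

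The paper fills this gap by a deterministic construction. First one checks that the right-hand side of the constraints is strictly above the floor: if $q_N(\alpha)\le\max_{1\le n\le N}\lambda_N(\norm{\phi_n})$ then $\Prob(T_N(\eps)\ge q_N(\alpha))=1$, contradicting the definition of the quantile; hence $q_N(\alpha)>\max_n\lambda_N(\norm{\phi_n})$. Next, since $D(J)$ is dense in $U$ (Assumption~\ref{defs:basicass}(ii)) and $K$ is bounded, one can choose $u_0\in D(J)$ with $Ku_0$ arbitrarily close to the projection of $y$ onto $\overline{\ran(K)}$; because $\phi_n\in\overline{\ran(K)}$, this makes each $\abs{\inner{y-Ku_0}{\phi_n^*}}$ as small as desired. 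Using the Lipschitz bound $t_N(s,r)\le t_N(0,r)+Ls$ and the strict inequality just shown, one concludes $t_N(\sigma^{-1}\abs{\inner{y-Ku_0}{\phi_n^*}},\norm{\phi_n})\le q_N(\alpha)$ for all $n\le N$, so $u_0\in\mathcal{A}_N(\alpha)\cap D(J)$. With this feasible point in hand, the remainder of your argument goes through unchanged and yields existence for every realisation of the data.
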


Finally, we note that Assumption \ref{gen:assK} already implies the
requirements in Proposition \ref{defs:exjmin} and consequently existence of
$J$-minimising solutions.

\subsection{Consistency and Convergence Rates}\label{gen:conssec}

We investigate the asymptotic behaviour of $\hat u_N(\alpha)$ as the noise level
$\sigma$ in \eqref{intro:noisemodel} tends to zero.  According to the
reasoning following Definition \ref{gen:smrest}, the parameters $N\in\N$ and
$\alpha\in(0,1)$ can be interpreted as regularisation parameters and have to be
chosen accordingly: The model parameter $N$ has to be increased in order to
guarantee a sufficiently accurate approximation of the image space $V$, whereas
the test-level $\alpha$ tends to $0$ such that the true solution (asymptotically)
satisfies the constraints of \eqref{intro:opt} almost surely. We formulate
consistency and convergence rate results by means of the Bregman-divergence of
the SMRE $\hat u_N(\alpha)$ and a true solution $u^\dagger$ in terms of almost
sure convergence.

Throughout this section we shall assume  that $\set{\sigma_k}_{k\in\N}$ is a
sequence of positive noise-levels in \eqref{intro:noisemodel} such that
$\sigma_k \ra 0^+$ as $k\ra\infty$. Moreover, we assume that
$\set{\alpha_k}_{k\in\N} \subset(0,1)$ is a sequence of significance levels and
that $N_k\geq N_0$ is such that
\begin{equation}\label{gen:sumlim}
  \sum_{k=1}^\infty\alpha_k < \infty\quad\text{ and }\quad
  \lim_{k\ra\infty} N_k = \infty.
\end{equation}

\begin{theorem}\label{gen:consist} 
Assume that Assumptions \ref{defs:basicass} and \ref{gen:assK} hold. Let further
$u^\dagger$ be a $J$-minimising solution of \eqref{intro:lineqn} where $g\in \overline{\spa \Phi }$ and assume that 
\begin{equation*}
\sup_{N\in\N} T_N(\eps) < \infty
\end{equation*}
and 
\begin{equation}\label{gen:paramchoice}
  \zeta_k :=  \sigma_k \max\left(\inf_{1\leq n\leq N_k}
  \lambda_{N_k}(\norm{\phi_n}), \sqrt{-\log \alpha_k}\right)\ra 0.
\end{equation}
Then, for $\hat u_k := \hat u_{N_k}(\alpha_k)$ as in \eqref{intro:opt} one
has
\begin{equation}\label{gen:conseqn}
  \sup_{k\in\N}\norm{\hat u_k} < \infty,\quad J(\hat u_k) \ra
  J(u^\dagger)\quad\text{ and }\quad D_J(u^\dagger, \hat u_k)  \ra 0\quad\text{ a.s. }
\end{equation}
as well as
\begin{equation}\label{gen:imgconseqn}
  \limsup_{k\ra\infty} \max_{1\leq n\leq N_k} \frac{ 
  \abs{\inner{\phi_n^*}{K\hat u_k - K u^\dagger}}}{\zeta_k} < \infty
  \quad
  \text{ a.s.} 
\end{equation}
\end{theorem}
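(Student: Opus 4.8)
The plan is to combine a Borel--Cantelli argument showing that the true solution is eventually admissible with a quantitative control of the projected residuals forced by the constraint in \eqref{intro:opt}, and then run the standard weak‑compactness / lower‑semicontinuity machinery. Throughout I fix a realisation in the almost‑sure event on which $\sup_N T_N(\eps)<\infty$. First I would show that $u^\dagger$ is eventually feasible: since $Y-Ku^\dagger=\sigma_k\eps$ we have $T_{N_k}(\sigma_k^{-1}(Y-Ku^\dagger))=T_{N_k}(\eps)$, so by \eqref{gen:quantile} one has $\Prob(T_{N_k}(\eps)>q_{N_k}(\alpha_k))\le\alpha_k$; as $\sum_k\alpha_k<\infty$ by \eqref{gen:sumlim}, Borel--Cantelli gives that a.s.\ $u^\dagger\in\mathcal A_{N_k}(\alpha_k)$ for all but finitely many $k$, whence (Proposition \ref{gen:smrexist}) the SMRE $\hat u_k$ exists and satisfies $J(\hat u_k)\le J(u^\dagger)$ for $k$ large, and in particular $\limsup_k J(\hat u_k)\le J(u^\dagger)$.

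The analytic core is the image‑space estimate. For $k$ large ($\sigma_k<\sigma_0$, $N_k\ge N_0$) and $\hat u_k$ feasible, monotonicity of $s\mapsto t_{N_k}(s,r)$ yields $t_{N_k}(\sigma_k^{-1}\abs{\inner{\phi_n^*}{Y-K\hat u_k}},\norm{\phi_n})\le q_{N_k}(\alpha_k)$ for each $n\le N_k$; since $t_N(\sigma s,r)\ge\lambda_N(r)$ by \eqref{gen:lowbnd}, inequality \eqref{gen:ineq} gives the elementary bound $t_N(s,r)\ge c_1 s+c_2\lambda_N(r)$, hence
\[
  \max_{1\le n\le N_k}\abs{\inner{\phi_n^*}{Y-K\hat u_k}}\le\frac{\sigma_k}{c_1}\Bigl(q_{N_k}(\alpha_k)-c_2\inf_{1\le n\le N_k}\lambda_{N_k}(\norm{\phi_n})\Bigr).
\]
On the other hand $t_N(s,r)\le t_N(0,r)+Ls=\lambda_N(r)+Ls\le Ls$ gives $T_N(\eps)\le L\max_{n\le N}\abs{\eps(\phi_n^*)}$, so a union bound over the $\mathcal N(0,1)$ variables $\eps(\phi_n^*)$ produces $q_N(\alpha)\le L\sqrt{2\log(2N/\alpha)}$, and the lower bound $t_N(s,r)\ge c_1 s+c_2\lambda_N(r)$ together with $\sup_N T_N(\eps)<\infty$ gives $\sigma_k\max_{n\le N_k}\abs{\eps(\phi_n^*)}=\bigo(\zeta_k)$. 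Inserting all of this into the display and using the parameter choice \eqref{gen:paramchoice}, whose role is precisely to keep $\sigma_k\sqrt{\log N_k}$, $\sigma_k\abs{\inf_n\lambda_{N_k}(\norm{\phi_n})}$ and $\sigma_k\sqrt{-\log\alpha_k}$ of order $\zeta_k$, one obtains $\max_{n\le N_k}\abs{\inner{\phi_n^*}{Y-K\hat u_k}}=\bigo(\zeta_k)$; since $K\hat u_k-Ku^\dagger=(K\hat u_k-Y)+\sigma_k\eps$, this is \eqref{gen:imgconseqn}, and it also shows the left‑hand side tends to $0$.

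From here I would finish by the usual variational argument. Writing $\inner{K\hat u_k}{\phi_n^*}=\inner{Y}{\phi_n^*}-\inner{Y-K\hat u_k}{\phi_n^*}$ with $\abs{\inner{Y}{\phi_n^*}}\le\norm g+\sigma_k\max_{n\le N_0}\abs{\eps(\phi_n^*)}$, the previous step bounds $\max_{1\le n\le N_0}\abs{\inner{K\hat u_k}{\phi_n^*}}$; with $J(\hat u_k)\le J(u^\dagger)<\infty$ and Assumption \ref{gen:assK} this forces $\sup_k\norm{\hat u_k}<\infty$. Any weak subsequential limit $\hat u_{k_j}\rightharpoonup\bar u$ then satisfies $\inner{\phi_n^*}{K\bar u}=\lim_j\inner{\phi_n^*}{K\hat u_{k_j}}=\lim_j\inner{\phi_n^*}{Y}=\inner{\phi_n^*}{g}$ for every fixed $n$, so $g-K\bar u\perp\spa\Phi$ and hence $K\bar u=g$ because $g\in\overline{\spa\Phi}$; thus $\bar u$ solves \eqref{intro:lineqn}, and lower semicontinuity of $J$ together with $\limsup_j J(\hat u_{k_j})\le J(u^\dagger)$ yields $J(\bar u)=J(u^\dagger)$ by $J$‑minimality, so $J(\hat u_k)\to J(u^\dagger)$. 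Finally, convexity gives $J'(u^\dagger)(\hat u_k-u^\dagger)\le J(\hat u_k)-J(u^\dagger)$ and subadditivity of the directional derivative gives $J'(u^\dagger)(u^\dagger-\hat u_k)\ge -J'(u^\dagger)(\hat u_k-u^\dagger)$, so $\liminf_k J'(u^\dagger)(u^\dagger-\hat u_k)\ge 0$ and
\[
  0\le D_J(u^\dagger,\hat u_k)=J(u^\dagger)-J(\hat u_k)-J'(u^\dagger)(u^\dagger-\hat u_k)\longrightarrow 0\quad\text{a.s.}
\]

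I expect the genuinely delicate point to be the image‑space estimate: the functions $t_N$ are pinned down only from below (through \eqref{gen:ineq}--\eqref{gen:lowbnd}) and only Lipschitz from above, so inverting the constraint needs care; moreover one must supply a sufficiently sharp quantile bound and then match the three competing scales $\sqrt{\log N_k}$, $\abs{\inf_n\lambda_{N_k}(\norm{\phi_n})}$ and $\sqrt{-\log\alpha_k}$ against $\zeta_k$, and it is exactly at this matching that the hypothesis $\sup_N T_N(\eps)<\infty$ is indispensable. By contrast the boundedness, the identification of weak limits as $J$‑minimising solutions, and the passage to the Bregman divergence are routine and follow the familiar template for variational regularisation.
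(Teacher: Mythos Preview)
Your overall architecture is right and matches the paper's, but two steps do not go through as written.

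\textbf{The quantile bound.} Your union bound $q_N(\alpha)\le L\sqrt{2\log(2N/\alpha)}$ injects a term $\sigma_k\sqrt{\log N_k}$ into the image-space estimate, and you then assert that \eqref{gen:paramchoice} ``keeps $\sigma_k\sqrt{\log N_k}$ of order $\zeta_k$''. It does not: $\zeta_k$ involves only $-\inf_{n\le N_k}\lambda_{N_k}(\|\phi_n\|)$ and $\sqrt{-\log\alpha_k}$, and in general there is no relation between these and $\log N_k$. (Concretely, for $t_N(s,r)=s-\sqrt{-2\gamma\log r}$ as in Section~\ref{lutz} the functions $\lambda_N$ are independent of $N$, so $N_k$ may grow arbitrarily fast while $\zeta_k\to 0$.) The paper avoids this by a Gaussian concentration argument (Borel's inequality, Lemma~\ref{gen:quant}): since $T_N(\eps)$ is an $L$-Lipschitz function of a standard Gaussian vector, one gets
\[
  q_N(\alpha)\le \med\bigl(T_N(\eps)\bigr)+L\sqrt{-2\log(2\alpha)},
\]
and the hypothesis $\sup_N T_N(\eps)<\infty$ a.s.\ then bounds $\sup_N\med(T_N(\eps))$ uniformly. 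This is precisely why that hypothesis is needed; a union bound does not exploit it.

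\textbf{The Bregman step.} In $D_J(u^\dagger,\hat u_k)$ the directional derivative is taken at the \emph{second} argument $\hat u_k$ (the displayed definition in Section~\ref{defs} has a typo; compare the inequality $0\le D_J(v,u)\le D_J^\xi(v,u)$ for $\xi\in\partial J(u)$, which only holds with $J'(u)(v-u)$). Your inequalities control $J'(u^\dagger)(u^\dagger-\hat u_k)$, not $J'(\hat u_k)(u^\dagger-\hat u_k)$, and the latter cannot be bounded from below by elementary convexity arguments alone. The paper instead invokes Fenchel duality: from \eqref{gen:consintnonempty} Slater's condition holds, so there exists $\xi_k$ with $K^*\xi_k\in\partial J(\hat u_k)$ and $-\xi_k\in\partial G_k(K\hat u_k)$; the second inclusion gives $\langle K^*\xi_k,\hat u_k-u^\dagger\rangle\ge 0$, whence $D_J(u^\dagger,\hat u_k)\le D_J^{K^*\xi_k}(u^\dagger,\hat u_k)\le J(u^\dagger)-J(\hat u_k)\to 0$. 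The Borel--Cantelli step, the boundedness via Assumption~\ref{gen:assK}, and the identification of weak limits are fine and coincide with the paper's argument.
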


Theorem \ref{gen:consist} states that if for a given vanishing sequence of
noise levels $\sigma_k$, suitable (in the sense of
\eqref{gen:paramchoice}) sequences of regularisation parameters $N_k$ and
$\alpha_k$ can be constructed, then the sequences of corresponding SMRE
converges to a true $J$-minimising solution $u^\dagger$ w.r.t. the
Bregman-divergence. We note that the assumption on the boundedness
of MR-statistic $T_N(\eps)$ is crucial and in general
non-trivial to show.

It is well known that without further regularity
restrictions on $u^\dagger$, the speed of convergence in \eqref{gen:conseqn} can
be arbitrarily slow. \emph{Source conditions} as in Definition
\ref{defs:solution} (iii) are known to constitute sufficient regularity
conditions with quadratic fidelity  (cf.
\cite{frick:BisHohMunRuy07,frick:LouLud08,frick:MaiRuy96}). In our situation,
where the fidelity controls the maximum over all residuals, we additionally have to assume
that the source elements exhibit certain approximation properties:

\begin{assumption}\label{gen:assSE}
There exists a $J$-minimising solution $u^\dagger$ of \eqref{intro:lineqn} that
satisfies the source condition \eqref{defs:solution:source}  with source
element $p^\dagger$. Moreover, for $n,N\in\N$ there exist  $b_{n,N}\in \R$
such that
\begin{equation}\label{gen:seapprox}
  \err_N(p^\dagger):= \norm{p^\dagger - \sum_{n=1}^N b_{n,N} \phi_n^*}\ra0
  \quad\text{ and } \quad\sup_{N\in\N} \sum_{n=1}^N
  \abs{b_{n,N}} < \infty.
\end{equation}
\end{assumption}

\begin{remark}\label{gen:assSErem}
\begin{enumerate}[i)]
  \item Assumption \ref{gen:assSE} amounts to say that there exists a
  $J$-minimising solution $u^\dagger$ that satisfies the source condition
  \eqref{defs:solution:source} with a source element $p^\dagger$ that can be
  approximated sufficiently well by the dictionary $\Phi$ in use.
  From \eqref{defs:solution:source} it becomes clear that we can always assume
  that $p^\dagger \in \overline{\ran(K)}$, such that the first condition in
  \eqref{gen:seapprox} is not very restrictive, in fact.
  
  \item Good estimates of approximation errors for non-orthogonal dictionaries
  $\Phi$ are hard to come up with in general. Examples of non-orthogonal
  dictionaries where such estimates are available are wavelet-
 \citep{frick:Dau92} and curvelet- \citep{frick:CanDon04} frames.
    
  \item It is important to note that, given prior information on the true
  solution $u^\dagger$, the conditions in Assumption \ref{gen:assSE} may indicate
  whether a given dictionary is well suited for the
  reconstruction of $u^\dagger$ or not. As we will see in Section \ref{simple},
  a priori information on the smoothness of $u^\dagger$ can typically be
  employed. 
\end{enumerate}
\end{remark}

\begin{theorem}\label{gen:thmrates}
Let the requirements of Theorem \ref{gen:consist} be satisfied and assume
further that Assumption \ref{gen:assSE} holds with $g\in \overline{\spa \Phi
}$. If $\eta_k := \max(\zeta_k, \err_{N_k}(p^\dagger)) \ra 0$, then 
\begin{equation}\label{gen:rateeqn}
  \limsup_{k\ra\infty} \frac{D_J^{K^*p^\dagger}(\hat u_k,
  u^\dagger)}{ \eta_k} < \infty \quad \text{ and }\quad \limsup_{k\ra\infty}
  \max_{1\leq n\leq N_k} \frac{ \abs{\inner{\phi_n^*}{K\hat u_k - K
  u^\dagger}}}{ \eta_k} < \infty \quad \text{ a.s.} 
  \end{equation}
\end{theorem}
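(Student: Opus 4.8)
The plan is to follow the same variational argument as in Theorem~\ref{gen:consist}, but now exploiting the source condition \eqref{defs:solution:source} to upgrade the qualitative statement $D_J(u^\dagger,\hat u_k)\to 0$ into a quantitative rate for $D_J^{K^*p^\dagger}(\hat u_k, u^\dagger)$. First I would recall that, with probability one, for $k$ large enough the true solution $u^\dagger$ lies in the admissible region $\mathcal{A}_{N_k}(\alpha_k)$ (this is exactly what the quantile choice $q_{N_k}(\alpha_k)$ buys us, together with $\sup_N T_N(\eps)<\infty$), so that the minimality of $\hat u_k$ gives $J(\hat u_k)\leq J(u^\dagger)$. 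Then I would write, using $\xi := K^*p^\dagger\in\partial J(u^\dagger)$,
\begin{equation*}
  D_J^{K^*p^\dagger}(\hat u_k,u^\dagger) = J(\hat u_k) - J(u^\dagger) - \inner{p^\dagger}{K\hat u_k - Ku^\dagger} \leq -\inner{p^\dagger}{K\hat u_k - Ku^\dagger},
\end{equation*}
so everything reduces to bounding the linear term $\abs{\inner{p^\dagger}{K\hat u_k - Ku^\dagger}}$ by a constant times $\eta_k$.

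The second step is to estimate that inner product by splitting $p^\dagger$ into its dictionary approximant $\sum_{n=1}^{N_k} b_{n,N_k}\phi_n^*$ and the remainder. For the remainder term I would use Cauchy--Schwarz together with the uniform norm bound on $K\hat u_k - Ku^\dagger$ (which follows from $\sup_k\norm{\hat u_k}<\infty$ in \eqref{gen:conseqn} and boundedness of $K$) to get a contribution of order $\err_{N_k}(p^\dagger)$. For the main term I would write
\begin{equation*}
  \abs{\inner{\sum_{n=1}^{N_k} b_{n,N_k}\phi_n^*}{K\hat u_k - Ku^\dagger}} \leq \Bigl(\sum_{n=1}^{N_k}\abs{b_{n,N_k}}\Bigr)\max_{1\leq n\leq N_k}\abs{\inner{\phi_n^*}{K\hat u_k - Ku^\dagger}},
\end{equation*}
and invoke the uniform summability $\sup_N\sum_n\abs{b_{n,N}}<\infty$ from Assumption~\ref{gen:assSE}. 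Thus both claimed estimates in \eqref{gen:rateeqn} hinge on the single quantity $M_k := \max_{1\leq n\leq N_k}\abs{\inner{\phi_n^*}{K\hat u_k - Ku^\dagger}}$, and it suffices to prove $\limsup_k M_k/\eta_k < \infty$ almost surely; the Bregman rate then follows from the two displays above since $\err_{N_k}(p^\dagger)\leq\eta_k$ by definition.

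To control $M_k$, the key is to use property \eqref{gen:ineq} of the functions $t_N$, which is designed precisely for this purpose. From the triangle inequality $\abs{\inner{\phi_n^*}{K\hat u_k - Ku^\dagger}} \leq \sigma_k\abs{\eps(\phi_n^*)} + \abs{\inner{\phi_n^*}{Y - K\hat u_k}\sigma_k^{-1}}\sigma_k$ after rescaling; more carefully, I would apply $t_{N_k}(\cdot,\norm{\phi_n})$ and use that $t_{N_k}(s,r)\geq c_1 s + c_2 t_{N_k}(\sigma_k s,r) \geq c_1 s + c_2\lambda_{N_k}(r)$ to pass between the residual $\sigma_k^{-1}(Y-K\hat u_k)$, which satisfies $T_{N_k}(\sigma_k^{-1}(Y-K\hat u_k))\leq q_{N_k}(\alpha_k)$ by admissibility, and the noise-free residual $K\hat u_k - Ku^\dagger$. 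This is the same manipulation that underlies \eqref{gen:imgconseqn} in Theorem~\ref{gen:consist}, only now one tracks the rate: combining the admissibility bound, the known behaviour $q_{N_k}(\alpha_k) = \bigo(\sqrt{-\log\alpha_k})$ of the quantiles under $\sup_N T_N(\eps)<\infty$, and the lower-bound constants $\lambda_{N_k}$, one arrives at $M_k = \bigo(\zeta_k) = \bigo(\eta_k)$ almost surely.

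I expect the main obstacle to be the bookkeeping in this last step: one must carefully track how the two competing scales in $\zeta_k$ --- the term $\sigma_k\inf_n\lambda_{N_k}(\norm{\phi_n})$ coming from the infimum $\lambda_N$ in \eqref{gen:lowbnd} and the term $\sigma_k\sqrt{-\log\alpha_k}$ coming from the Gaussian extreme-value bound on $q_{N_k}(\alpha_k)$ --- interact through inequality \eqref{gen:ineq}, and argue that the resulting bound on $M_k$ is indeed $\bigo(\zeta_k)$ uniformly in $n$ and $k$, on the almost-sure event where $\sup_N T_N(\eps)<\infty$ and $u^\dagger$ is eventually admissible. Once $M_k = \bigo(\eta_k)$ is established the rest is the routine algebra of the first two paragraphs, so the proof is essentially a quantitative refinement of the consistency proof rather than a new argument.
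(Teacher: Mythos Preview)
Your proposal is correct and follows essentially the same route as the paper: the paper likewise uses $J(\hat u_k)\leq J(u^\dagger)$ eventually a.s.\ to reduce the Bregman rate to a bound on $\abs{\inner{p^\dagger}{K\hat u_k-Ku^\dagger}}$, splits $p^\dagger$ into its dictionary approximant plus remainder (this is packaged as Lemma~\ref{gen:imgrates}), and controls $M_k$ via the convexity of $t_N$, inequality~\eqref{gen:ineq}, and the quantile bound $q_N(\alpha)\leq \med(T_N(\eps))+L\sqrt{-2\log(2\alpha)}$ from Borel's inequality (Lemma~\ref{gen:quant}), which together constitute Lemma~\ref{gen:imgest}. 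The only cosmetic difference is that the paper isolates these three estimates as separate lemmata, whereas you sketch them inline; your description of the $M_k$ step is slightly garbled (the paper applies convexity of $t_N$ to the midpoint $(2\sigma_k)^{-1}\abs{\inner{\phi_n^*}{K\hat u_k-Ku^\dagger}}$ rather than a triangle inequality directly), but the mechanism you identify is the right one.
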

  
\begin{remark}\label{gen:ratesrem}
The convergence rate result in Theorem \ref{gen:thmrates} is rather general, in
the sense that the rate function $\eta_k$ in \eqref{gen:rateeqn} has to be
determined for each choice of $K$, $J$ and $\Phi$ separately. We outline a
general procedure how this can be done in practice: Assume that $u^\dagger$ is a
$J$-minimising solution of \eqref{intro:lineqn} that satisfies Assumption
\ref{gen:assSE} with a source element $p^\dagger$.  
\begin{enumerate}[(i)]
  
  \item The sequence $\set{-\inf_{1\leq n\leq N} \lambda_N(\norm{\phi_n})
  }_{N\in\N}$ is positive according to \eqref{gen:lowbnd}. Hence
  \begin{equation*}
    N_k:= \inf\set{ N\in\N~:~ \err_N(p^\dagger) \leq- \sigma_k \inf_{1\leq n\leq
    N} \lambda_N(\norm{\phi_n}) }
  \end{equation*}
  is well-defined and since $\set{\sigma_k}_{k\in\N}$ is
  non-increasing one has $N_k \leq N_{k+1}$ and $N_k \ra \infty$ as
  $k\ra\infty$.
  \item After setting $\eta_k =-\sigma_k \inf_{1\leq n\leq N_k}
  \lambda_{N_k}(\norm{\phi_n})$ it remains to check that the sequence of
  test-levels $\alpha_k = \exp\left(-\left(\kappa
  \eta_k\slash \sigma_k \right)^2\right)$ is summable (for some constant
  $\kappa > 0$).
\end{enumerate}
For the so constructed sequences $N_k$, $\eta_k$ and $\alpha_k$, the
assertions of Theorem  \ref{gen:thmrates} hold. 
\end{remark}
 
As we will see in Section \ref{simple}, the procedure in Remark
\ref{gen:ratesrem} typically results in convergence rates $\eta \sim \sigma
\sqrt{-\log \sigma}$. For orthogonal dictionaries $\Phi$ it will turn out in
Section \ref{simple:indep} that these rates are nearly optimal for the
smoothness class induced by Assumption \ref{gen:assSE} (cf. Example
\ref{simple:svd} below). It is an open question what the optimal rates are for
general (non-orthogonal) dictionaries.

\section{Applications and Examples}\label{simple}

In Section \ref{gen} we developed a general method for estimation of
$J$-minimising solutions of linear and ill-posed operator equations from  noisy
data. Our estimation scheme thereby employes the MR-statistic
$T_N$ (cf. Definition \ref{gen:mrstat}).  In this section we will study
particular instances of MR-statistics  covered by the general theory in Section
\ref{gen}:
\begin{itemize}
  \item We study the case where $T_N$ constitutes the extreme-value statistic of
the coefficients w.r.t. an orthonormal dictionary $\Phi$ (Section
\ref{simple:indep}). We show how Assumption \ref{gen:assSE} in this case reduces
to the requirement that the true solution $u^\dagger$ lies in a
Sobolev-ellipsoid w.r.t. the system $\Phi$. Moreover, it will turn out that for
the case when $\Phi$ denotes the eigensystem of a compact operator, SMRE can be
considered as soft-thresholding.
  \item In Section \ref{lutz} we skip the assumption of orthonormality and  examine
general SMREs w.r.t. (non-orthonormal)  dictionaries that
satisfy certain entropy conditions. In particular, we will consider the case
when $U = V = \Ls{2}([0,1]^d)$ and when $\Phi$ consists of indicator
functions w.r.t. a redundant system of subcubes in $[0,1]^d$.
\item Finally, we study the case when the penalty functional $J$
is chosen to be the total-variation semi-norm on $U = \L{2}$ in Section
\ref{tv}.  We highlight the implications of our general
convergence rate results for image deconvolution and complement the theoretical
results by some numerical examples. In particular, we compare our approach to
the locally adaptive image reconstruction method recently introduced in
\cite{frick:Gra09}.
\end{itemize}

Throughout this section we assume that Assumptions \ref{defs:basicass} and
\ref{gen:assK} hold. Moreover, we shall agree upon
$\set{\sigma_k}_{k\in\N}$ being a sequence of noise levels such that $\sigma_k
\ra 0^+$ and that for $k\in\N$ there are $\alpha_k \in (0,1)$ and
$N_k\in\set{N_0, N_0+1,\ldots}$ such that \eqref{gen:sumlim} holds.

\subsection{Introductory Example: Gaussian Sequence Model}\label{simple:indep}
In this section we shall consider the case where the dictionary $\Phi  =
\set{\phi_1,\phi_2,\ldots}$ constitutes an orthonormal basis of
$\overline{\ran(K)}$. Evaluation of Equation \eqref{intro:noisemodel} at the
elements $\phi_n$ hence yields
\begin{equation*}
  y_n =  \theta_n + \sigma \eps_n,
\end{equation*}
where $Y(\phi_n) = y_n$, $\theta_n = \inner{Ku}{\phi_n}$ and $\eps_n =
\eps(\phi_n)$. We define the MR-statistic $T_N$ by setting
$t_N(s,r) = s - \sqrt{2\log N}$ in Definition \ref{gen:mrstat}. In
other words, we consider the maximum of the coefficients w.r.t to the dictionary
$\Phi$, that is
\begin{equation}\label{simple:simplestat}
  T_N(v) = \max_{1\leq n\leq N}\abs{\inner{v}{\phi_n}} - \sqrt{2\log N}.
\end{equation}
Since $\set{\phi_1,\phi_2,\ldots}$ are linearly independent and
normalised, it follows that the random variables $\eps_1,\eps_2,\ldots$ are
independent and standard normally distributed. This implies that $
T_N(\eps)$ is bounded almost surely.

In what follows, we will apply Theorems \ref{gen:consist} and
\ref{gen:thmrates} to the present case. To this end, we observe that for
$\sigma > 0$ and $ N\in\N$ 
\begin{equation*}
  -\sigma \inf_{1\leq n\leq N} \lambda_{N} (\norm{\phi_n}) =
  \sigma \sqrt{2\log N}.
\end{equation*}
With the above preparations, we are able to reformulate the consistency result
in Theorem \ref{gen:consist}.

\begin{corollary}\label{simple:conscorindep}
Let $u^\dagger \in U$ be a $J$-minimising solution
of \eqref{intro:lineqn} where $g\in \overline{\spa \Phi}$. Moreover, assume
that $\sigma_k^2 \max(\log N_k,-\log \alpha_k) \ra 0$.
Then, the SMRE $\hat u_k = \hat u_{N_k}(\alpha_k)$ almost surely satisfies
\eqref{gen:conseqn} and \eqref{gen:imgconseqn}.
\end{corollary}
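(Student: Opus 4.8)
The plan is to derive Corollary~\ref{simple:conscorindep} as a direct specialisation of Theorem~\ref{gen:consist} to the orthonormal case, so the bulk of the work is simply verifying that the hypotheses of the general theorem are met. First I would record that, by the choice $t_N(s,r) = s - \sqrt{2\log N}$ made in \eqref{simple:simplestat}, one has $\lambda_N(\norm{\phi_n}) = -\sqrt{2\log N}$ for every $n\le N$, so that $\inf_{1\le n\le N_k}\lambda_{N_k}(\norm{\phi_n}) = -\sqrt{2\log N_k}$, and consequently
\begin{equation*}
  \zeta_k = \sigma_k \max\!\left(\sqrt{2\log N_k}, \sqrt{-\log\alpha_k}\right).
\end{equation*}
The assumed condition $\sigma_k^2\max(\log N_k, -\log\alpha_k)\ra 0$ is then exactly equivalent to $\zeta_k\ra 0$, which is the parameter-choice requirement \eqref{gen:paramchoice} of Theorem~\ref{gen:consist}.

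Next I would check the remaining standing hypotheses of Theorem~\ref{gen:consist}. Assumptions~\ref{defs:basicass} and~\ref{gen:assK} are in force throughout Section~\ref{simple} by the agreement stated there, and by \eqref{gen:sumlim} the sequences $\alpha_k$ and $N_k$ already satisfy $\sum_k\alpha_k<\infty$ and $N_k\ra\infty$. The one genuinely probabilistic point is the requirement $\sup_{N\in\N}T_N(\eps)<\infty$ almost surely. Here I would invoke the observation already made in the text preceding the corollary: since $\Phi$ is an orthonormal basis of $\overline{\ran(K)}$, the variables $\eps_n = \eps(\phi_n)$ are i.i.d. standard normal, and a standard Gaussian maximal inequality (for instance $\max_{1\le n\le N}\abs{\eps_n}\le \sqrt{2\log N}(1+o(1))$ almost surely, via Borel--Cantelli applied to the Gaussian tail bound $\Prob(\abs{\eps_n}>x)\le 2e^{-x^2/2}$) shows that $T_N(\eps) = \max_{1\le n\le N}\abs{\eps_n} - \sqrt{2\log N}$ is almost surely bounded above uniformly in $N$. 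This is the only step requiring a real argument, and it is the place where orthonormality of $\Phi$ is essential; everything else is bookkeeping.

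Finally, with all hypotheses of Theorem~\ref{gen:consist} verified and $g\in\overline{\spa\Phi}$ by assumption, I would simply quote the theorem to conclude that $\hat u_k = \hat u_{N_k}(\alpha_k)$ satisfies $\sup_k\norm{\hat u_k}<\infty$, $J(\hat u_k)\ra J(u^\dagger)$, $D_J(u^\dagger,\hat u_k)\ra 0$ a.s. (which is \eqref{gen:conseqn}) and $\limsup_k \max_{1\le n\le N_k}\abs{\inner{\phi_n^*}{K\hat u_k - Ku^\dagger}}/\zeta_k<\infty$ a.s. (which is \eqref{gen:imgconseqn}), noting that $\phi_n^* = \phi_n$ since $\norm{\phi_n}=1$. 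The main (and essentially only) obstacle is the almost-sure uniform boundedness of $T_N(\eps)$; I expect the rest of the proof to be a couple of lines of translation between the notation of the general theorem and that of the Gaussian sequence model.
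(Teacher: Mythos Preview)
Your proposal is correct and follows essentially the same route as the paper: the corollary is stated there without a separate proof block, and the paper's argument consists precisely of the preparatory remarks preceding it — namely that the i.i.d.\ standard normal structure of $\eps_n=\eps(\phi_n)$ makes $T_N(\eps)$ a.s.\ bounded, and that $-\inf_{1\le n\le N}\lambda_N(\norm{\phi_n}) = \sqrt{2\log N}$, after which Theorem~\ref{gen:consist} applies directly. Your write-up is in fact slightly more explicit than the paper's (you spell out the Borel--Cantelli/Gaussian tail step and the identification of $\zeta_k$), but the substance is identical.
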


In order to apply the convergence rate result in Theorem \ref{gen:thmrates},
Assumption \ref{gen:assSE} has to be verified. We set $b_{n,N}\equiv
\inner{p^\dagger}{\phi_n}$ in Assumption \ref{gen:assSE}. Note that   the
expression $\err_N(p)$ denotes the  approximation error of the $N$-th partial
Fourier-series w.r.t. $\Phi$. Thus, Assumption \ref{gen:assSE} is linked to
absolute summability of the Fourier-coefficients w.r.t. the basis $\Phi$, i.e.
\begin{equation}\label{simple:sourcerep}
  \sum_{n=1}^\infty \abs{\inner{p^\dagger}{\phi_n}} < \infty
\end{equation}
The \emph{Bernstein-Stechkin criterion} is a classical method for
testing for absolute summability. We present a version suitable for our purpose
in the following

\begin{proposition}\label{simple:berstecrit}
Let $p^\dagger \in V$. Then, \eqref{simple:sourcerep} is satisfied if $\sum_{N =
1}^\infty \err_N(p^\dagger)\slash \sqrt{N}  < \infty$.
\end{proposition}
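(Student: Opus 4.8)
The plan is to bound $\err_N(p^\dagger)$ in terms of the tail of the Fourier series and then use a dyadic grouping together with the Cauchy--Schwarz inequality to pass from the hypothesis $\sum_N \err_N(p^\dagger)/\sqrt{N} < \infty$ to $\sum_n \abs{\inner{p^\dagger}{\phi_n}} < \infty$. Write $a_n = \abs{\inner{p^\dagger}{\phi_n}}$. Since $\Phi$ is orthonormal in $\overline{\ran(K)}$, for $M > N$ the partial Fourier sums satisfy $\norm{\sum_{n=N+1}^M a_n \phi_n}^2 = \sum_{n=N+1}^M a_n^2 \le \err_N(p^\dagger)^2$, because $\sum_{n=1}^N \inner{p^\dagger}{\phi_n}\phi_n$ is the best approximation to $p^\dagger$ from $\spa\set{\phi_1,\ldots,\phi_N}$ and the orthogonal complement of the truncation has squared norm $\err_N(p^\dagger)^2 = \sum_{n > N} a_n^2$; in particular $\sum_{n = N+1}^{2N} a_n^2 \le \err_N(p^\dagger)^2$.

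Next I would estimate a block sum: by Cauchy--Schwarz,
\begin{equation*}
  \sum_{n=N+1}^{2N} a_n \le \sqrt{N}\,\Bigl(\sum_{n=N+1}^{2N} a_n^2\Bigr)^{1/2} \le \sqrt{N}\,\err_N(p^\dagger).
\end{equation*}
Now sum this over the dyadic scales $N = 2^j$, $j \ge 0$ (and add the single term $a_1$ separately): the left-hand sides telescope to cover all indices $n \ge 2$, giving
\begin{equation*}
  \sum_{n=2}^\infty a_n \le \sum_{j=0}^\infty \sqrt{2^j}\,\err_{2^j}(p^\dagger).
\end{equation*}
So it remains to show that $\sum_j \sqrt{2^j}\,\err_{2^j}(p^\dagger) < \infty$ follows from $\sum_N \err_N(p^\dagger)/\sqrt N < \infty$. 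This is the standard Cauchy condensation step: since $N \mapsto \err_N(p^\dagger)$ is non-increasing, for $2^{j-1} \le N < 2^j$ one has $\err_{2^j}(p^\dagger) \le \err_N(p^\dagger)$, hence
\begin{equation*}
  \sqrt{2^j}\,\err_{2^j}(p^\dagger) \le \sqrt{2}\cdot\frac{1}{2^{j-1}}\sum_{N=2^{j-1}}^{2^j - 1}\sqrt{2^j}\,\err_N(p^\dagger) \le 2\sqrt{2}\sum_{N=2^{j-1}}^{2^j-1}\frac{\err_N(p^\dagger)}{\sqrt N},
\end{equation*}
using $\sqrt{2^j}/2^{j-1} = 2/\sqrt{2^j} \le 2/\sqrt{N}$ on that range. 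Summing over $j$ yields $\sum_j \sqrt{2^j}\,\err_{2^j}(p^\dagger) \le 2\sqrt2 \sum_{N\ge 1} \err_N(p^\dagger)/\sqrt N < \infty$, which closes the argument.

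The main obstacle is just keeping the dyadic bookkeeping and the constants honest — in particular being careful that the monotonicity of $\err_N(p^\dagger)$ is what links the condensed sum $\sum_j \sqrt{2^j}\err_{2^j}$ back to the original series, and that the telescoping of blocks $\{N+1,\ldots,2N\}$ over $N = 2^j$ genuinely exhausts all indices. No deep input is needed beyond orthonormality of $\Phi$ (to identify $\err_N(p^\dagger)$ with the $\ell^2$-tail of the coefficients) and Cauchy--Schwarz; everything else is elementary summation.
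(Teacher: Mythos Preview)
Your proof is correct. The paper takes a different route: it simply invokes the classical Bernstein--Stechkin theorem for $\Ls{2}([0,1])$ (citing a textbook) and then transfers the statement to an arbitrary separable Hilbert space via an isometric isomorphism. What you have written is, in effect, a clean self-contained proof of that classical theorem in the abstract Hilbert-space setting --- dyadic blocking of the coefficient sequence, Cauchy--Schwarz on each block, and Cauchy condensation to pass from $\sum_N \err_N/\sqrt{N}$ to $\sum_j \sqrt{2^j}\,\err_{2^j}$. Your approach has the advantage of being entirely elementary and not relying on an external reference; the paper's approach is shorter on the page but opaque unless the reader tracks down the citation.

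One small remark: your equality $\err_N(p^\dagger)^2 = \sum_{n>N} a_n^2$ tacitly assumes $p^\dagger \in \overline{\spa\Phi}$. For general $p^\dagger \in V$ only the inequality $\err_N(p^\dagger)^2 \ge \sum_{n>N} a_n^2$ holds, but that is all you actually use; moreover the hypothesis $\sum_N \err_N(p^\dagger)/\sqrt{N} < \infty$ already forces $\err_N(p^\dagger) \to 0$ and hence $p^\dagger \in \overline{\spa\Phi}$, so no harm is done. The handling of the $j=0$ term in the condensation step is also slightly glossed over, but $\err_1(p^\dagger)$ is a single finite term and causes no trouble.
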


\begin{proof}
  The classical version of the Bernstein-Stechkin Theorem \citep[see e.g. ][Thm.
  7.4]{frick:Lau73} states that for each $f\in \Ls{2}([0,1])$ and each ON-basis
  $\underline{v} = \set{v_1,v_2,\ldots}$ of $\Ls{2}([0,1])$, the
  Fourier-coefficients of $f$ are absolutely summable, if $\sum_{N =
1}^\infty \err_N(p^\dagger)\slash \sqrt{N}  < \infty$. Since
  each separable Hilbert space is isometrically isomorphic to $\Ls{2}([0,1])$,
  the assertion finally follows.
\end{proof}
 
Following the procedure outlined in Remark \ref{gen:ratesrem} (Section
\ref{gen}) we define
\begin{equation}\label{simple:sequences}
  N_k := \inf\set{N\in\N~:~\err_N(p^\dagger) \leq \sigma_k\sqrt{2\log
  N}}\quad\text{ and }\quad \eta_k := \sigma_k \sqrt{2\log N_k}.
\end{equation}

\begin{corollary}\label{simple:ratescordep}
Let $g\in V$ be attainable and $u^\dagger \in U$ be a $J$-minimising solution of
\eqref{intro:lineqn} that satisfies the source condition with a source element
$p^\dagger$ such that the condition in Proposition \ref{simple:berstecrit}
holds. Moreover, let $N_k$ and $\eta_k$ be defined  as in
\eqref{simple:sequences}. If
\begin{equation*}
  \alpha_k := e^{-\left(\frac{\kappa \eta_k}{\sigma_k}\right)^2} =
  N_k^{-2\kappa^2} \in \ell^1(0,1)
\end{equation*}
for a constant $\kappa > 0$, then the SMRE $\hat u_k = \hat
u_{N_k}(\alpha_k)$ almost surely satisfies \eqref{gen:rateeqn}.
\end{corollary}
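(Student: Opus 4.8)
The plan is to derive Corollary~\ref{simple:ratescordep} as a direct instance of Theorem~\ref{gen:thmrates} by verifying each of its hypotheses for the present choice of $K$, $J$ and the orthonormal dictionary $\Phi$. First I would recall from the paragraph preceding Corollary~\ref{simple:conscorindep} that here $t_N(s,r) = s - \sqrt{2\log N}$, so that $\lambda_N(\norm{\phi_n}) = -\sqrt{2\log N}$ uniformly in $n$ and hence $-\sigma_k\inf_{1\le n\le N_k}\lambda_{N_k}(\norm{\phi_n}) = \sigma_k\sqrt{2\log N_k} = \eta_k$. This is exactly the quantity appearing in step (ii) of Remark~\ref{gen:ratesrem}. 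Next, since $\Phi$ is orthonormal and normalised, the coordinates $\eps_n = \eps(\phi_n)$ are i.i.d.\ standard normal, so $T_N(\eps) = \max_{1\le n\le N}\abs{\eps_n} - \sqrt{2\log N}$; the classical fact that $\max_{1\le n\le N}\abs{\eps_n}/\sqrt{2\log N}\to 1$ almost surely (or a Borel--Cantelli argument on the Gaussian tail) gives $\sup_{N\in\N}T_N(\eps) < \infty$ a.s., which is the boundedness hypothesis of Theorem~\ref{gen:consist}.

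The second block of hypotheses to check concerns the source condition and its approximation properties, i.e.\ Assumption~\ref{gen:assSE}. Here I would set $b_{n,N} = \inner{p^\dagger}{\phi_n}$, so that $\err_N(p^\dagger)$ is precisely the tail of the Fourier expansion of $p^\dagger$ with respect to the orthonormal basis $\Phi$; the first condition in \eqref{gen:seapprox} then holds automatically by Parseval, while the second, $\sup_N\sum_{n=1}^N\abs{\inner{p^\dagger}{\phi_n}} < \infty$, is exactly \eqref{simple:sourcerep}, which follows from the Bernstein--Stechkin hypothesis of the corollary via Proposition~\ref{simple:berstecrit}. This identifies the rate function: with $N_k$ and $\eta_k$ as in \eqref{simple:sequences}, the definition of $N_k$ forces $\err_{N_k}(p^\dagger)\le\sigma_k\sqrt{2\log N_k} = \eta_k$, so $\max(\zeta_k,\err_{N_k}(p^\dagger)) = \eta_k$ (after noting $\zeta_k = \sigma_k\max(\sqrt{2\log N_k},\sqrt{-\log\alpha_k})$ reduces to $\eta_k$ under the stated choice of $\alpha_k$, since $\kappa\eta_k/\sigma_k = \kappa\sqrt{2\log N_k}$ can be absorbed up to a constant). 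One also needs $\eta_k\to 0$ and $N_k\to\infty$, which come from $\sigma_k\to 0$ together with $\err_N(p^\dagger)\to 0$, plus summability of $\alpha_k$, all of which is built into \eqref{gen:sumlim} and the assumption $\alpha_k = N_k^{-2\kappa^2}\in\ell^1$.

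Finally I would invoke Theorem~\ref{gen:thmrates} with $\eta_k$ as above (the theorem's statement literally asks for $\eta_k = \max(\zeta_k,\err_{N_k}(p^\dagger))$, which we have just identified with $\sigma_k\sqrt{2\log N_k}$) to conclude that $\limsup_k D_J^{K^*p^\dagger}(\hat u_k,u^\dagger)/\eta_k < \infty$ and $\limsup_k\max_{1\le n\le N_k}\abs{\inner{\phi_n^*}{K\hat u_k - Ku^\dagger}}/\eta_k < \infty$ almost surely, which is \eqref{gen:rateeqn}. The only genuinely non-bookkeeping step is the almost sure boundedness of $\sup_N T_N(\eps)$ and the simplification of $\zeta_k$ to $\eta_k$ under the prescribed $\alpha_k$; everything else is a matter of matching notation. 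I expect the main obstacle to be making the reduction $\max(\sqrt{2\log N_k},\sqrt{-\log\alpha_k}) = O(\sqrt{2\log N_k})$ clean: since $-\log\alpha_k = 2\kappa^2\log N_k$, this holds with constant $\max(1,\kappa\sqrt 2)$, and one should be careful that the $\limsup$ statements are unaffected by this constant, so no generality is lost.
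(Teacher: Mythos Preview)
Your proposal is correct and follows exactly the route the paper intends: the corollary is stated without proof because it is meant to be read as a direct specialisation of Theorem~\ref{gen:thmrates} via the recipe in Remark~\ref{gen:ratesrem}, and you have supplied precisely those verifications (boundedness of $T_N(\eps)$, Assumption~\ref{gen:assSE} with $b_{n,N}=\inner{p^\dagger}{\phi_n}$ through Proposition~\ref{simple:berstecrit}, and the identification of $\eta_k$). One minor slip: from $-\log\alpha_k = 2\kappa^2\log N_k$ you get $\sqrt{-\log\alpha_k} = \kappa\sqrt{2\log N_k}$, so the constant is $\max(1,\kappa)$ rather than $\max(1,\kappa\sqrt{2})$; this is harmless for the $\limsup$ conclusion, as you note.
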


The problem of characterising those elements $p^\dagger\in V$ that satisfy the
assumption of Proposition \ref{simple:berstecrit} is a classical issue in
Fourier-analysis and approximation theory. Sufficient condition are usually
formalised by characterising the decay properties of the Fourier-coefficients.
In a function space setting, this leads to particular smoothness classes of
functions and in the general situation can be given in terms of \emph{Sobolev
ellipsoids}. For constants $\beta, Q > 0$ we define $\Theta(\beta, Q)$ as the
infinite-dimensional ellipsoid
\begin{equation}\label{simple:sobell}
  \Theta(\beta,Q) = \set{ \theta \in \ell^2~:~ \sum_{n\in\N} n^{2\beta}
  \theta_n^2 \leq Q^2}.
\end{equation}
The \emph{Sobolev class} $W(\beta, Q)\subset V$ is then defined to consists of
all $v\in V$ such that $\set{\inner{v}{\phi_n}}_{n\in\N}\subset \Theta(\beta,Q)$ 
\citep[see][Sec.1.10.1]{frick:Tsy09}. For $v\in W(\beta,Q)$ we have that
Proposition  \ref{simple:berstecrit} is applicable if $\beta > 1\slash 2$.

\begin{example}\label{simple:svd}
  Assume that $J(u) = \frac{1}{2}\norm{u}^2$ and let $K$ be a compact operator
  with singular value decomposition (SVD) $\set{(\psi_n,\phi_n,s_n)}_{n\in\N}$:
  $\set{\psi_n}_{n\in\N}$ is an orthonormal basis (ONB) of $\ker(K)^\bot$,
  $\set{\phi_n}_{n\in\N}$ is an ONB of $\overline{\ran(K)}$ and the singular values
  $\set{s_n}_{n\in\N}$ are positive and $s_n\ra 0$ as $n\ra\infty$. Moreover
  \begin{equation}\label{simple:svdeqn}
    K\psi_n = s_n \phi_n\quad\text{ and }\quad K^* \phi_n = s_n \psi_n,
  \end{equation}
  for all $n\in\N$. For $N\in\N$ and $\alpha\in (0,1]$ it turns out (e.g. by
  applying the method of Lagrangian multipliers) that the SMRE $\hat
  u_N(\alpha)$ with $T_N$ as in \eqref{simple:simplestat} is a \emph{shrinkage
  estimator} given by
  \begin{equation*}
    \hat u_N(\alpha) = \sum_{n=1}^N s_n^{-1} y_n \left(1 -
    \frac{q_N(\alpha) + \sqrt{2\log N}}{\abs{y_n}}\right)_+ \psi_n.
  \end{equation*}
  We note that $\hat u_N(\alpha)$ is a particular instance of a soft
  thresholding estimator.
  
  Now, let $u^\dagger\in U$ be a minimum-norm solution of \eqref{intro:lineqn}
  that satisfies the source condition $K^*p^\dagger = u^\dagger$
  (cf. Example \ref{defs:norm})
  with source element $p^\dagger\in W(\beta,Q)$ for $Q > 0$ and $\beta > 1\slash 2$. Then,
  $\err_N(p^\dagger) \leq Q N^{-\beta}$ and it follows from \eqref{simple:sequences} that
  \begin{equation*}
    N_k \sim \left(\frac{Q}{\sigma_k}\right)^2\quad\text{
    and }\quad \eta_k \sim \sigma_k\sqrt{-\log \sigma_k}.
  \end{equation*}
  If $\sigma_k$ has polynomial decay, we can choose a constant $\kappa > 0$
  such that $\alpha_k = \exp(-(\kappa \eta_k \slash \sigma_k)^2) =
  \sigma_k^{\kappa^2}$ is summable and it follows from Corollary
  \ref{simple:ratescordep} and Example \ref{defs:norm} that
  \begin{equation*}
    \limsup_{k\ra\infty} \frac{1}{\sigma_k\sqrt{-\log
    \sigma_k}}\norm{u^\dagger - \hat u_{N_k}(\alpha_k)}^2 < \infty\quad\text{
    a.s.}
  \end{equation*}
  If the operator equation $Ku=g$ is \emph{mildly ill-posed}, i.e. $s_n\sim
  n^{-\gamma}$ for some $\gamma > 0$, then the equation $K^*p^\dagger =
  u^\dagger$ together with $p^\dagger \in W(\beta, Q)$ implies that $u^\dagger
  \in W(\beta + \gamma, Q)$. The optimal rates (w.r.t. the quadratic risk) are
  known to be of order $\psi(\beta) = \sigma_k^{4(\beta+\gamma)\slash (4\lambda
  + 2\beta + 1)}$ (cf. \cite[Thm. 1]{frick:Cav08}). Since $\psi(\beta) \to
  \sigma_k$ for $\beta\to 1\slash 2$ the convergence rate implied by Theorem
  \ref{gen:thmrates} is optimal (up to a $\log$-factor). 
\end{example}

As mentioned above, sufficient conditions for the Bernstein-Stechkin
criterion (cf. Proposition \ref{simple:berstecrit}) in a function space
setting are usually formalised in characterising smoothness properties. The following example shows
how this applies to H\"older-continuity.

\begin{example}\label{simple:trigo}
  Let $V = \Ls{2}_{\text{per}}([0,1])$ be the Hilbert space of all
  square-integrable and periodic functions on the unit interval. Moreover,  we
  assume that $\overline{\ran(K)} =
  \Ls{2}([0,1])$ and consider the \emph{trigonometric basis}
  \begin{equation*}
    \phi_{2n} = \sqrt{2}\cos(n\pi x)\quad\text{ and }\quad \phi_{2n+1}
    = \sqrt{2}\sin(n\pi x).
  \end{equation*}
  Assume that  $p^\dagger \in \mathcal{H}_\beta([0,1])\cap V$ (cf. Definition
  \ref{app:hoelder}) with $\beta > 1\slash 2$. Then we have that
  $\err_N(p^\dagger) \leq Q N^{-\beta}\log N$ for a suitable constant $Q>0$ and
  therefore it follows from  Proposition \ref{simple:berstecrit} that
  \eqref{simple:sourcerep} holds.
  
  Hence, if $u^\dagger$ is a $J$-minimising solution of \eqref{intro:lineqn}
  that satisfies the source condition \eqref{defs:solution:source} with source
  element $p^\dagger \in \mathcal{H}_\beta([0,1])$  and
  if the sequences $N_k, \eta_k$ and $\alpha_k$ are chosen as in Example
  \ref{simple:svd}, then $\hat u_k = \hat u_{N_k}(\alpha_k)$  almost surely
  satisfy \eqref{gen:rateeqn}.
\end{example}

\begin{remark}
\begin{enumerate}[i)]
  \item The assertions of Example
  \ref{simple:trigo} still hold if the trigonometric basis is replaced by any
  other  orthonormal basis $\set{\phi_n}_{n\in\N}$ of $\overline{\ran(K)}$ such
  that the Bernstein-Stechkin criterion in Proposition \ref{simple:berstecrit}
  is satisfied. This holds for example for a vast class of orthonormal wavelet bases of $\Ls{2}([0,1])$ as studied in
  \cite{frick:CohDauVia93}.
  \item For the trigonometric basis in Example \ref{simple:trigo}, the
  Bernstein-Stechkin criterion \ref{simple:berstecrit} can be replaced by the
  requirement that $p^\dagger\in \mathcal{H}_\beta([0,1])$ for any $\beta>0$
  is additionally of bounded variation \citep[see][Vol.1\
  Thm.3.6]{frick:Zyg77}.
\end{enumerate}
\end{remark}

\subsection{Non-orthogonal Models}\label{lutz}

In contrast to Section \ref{simple:indep}, where we considered orthonormal
dictionaries, we will now focus on more general (non-orthonormal)
systems. In other words,  we consider sequences
\begin{equation*}
  \Phi = \set{\phi_1,\phi_2, \dots } \subset \overline{\ran(K)}\backslash\set{0}
\end{equation*}
and assume that $\norm{\phi_n}\leq 1$ for all $n\in\N$. Moreover, we will
make use of the MR-statistic $T_N$ (cf. Definition \ref{gen:mrstat})
defined by
\begin{equation}\label{lutz:statistic}
  t_N(s,r) = s - \sqrt{-2\gamma \log r},\quad (s,r)\in \R^+\times (0,1]
\end{equation}
where $\gamma > 0$ is some constant. As outlined in Example \ref{gen:mrstatex},
one verifies that $t_N(s,r)$ satisfies the assumptions of Definition
\ref{gen:mrstat}. In particular, we find that $\lambda_N(r) = - \sqrt{-2\gamma
\log r} > -\infty$ for all $r\in (0,1]$. The parameter $\gamma$ that appears in
\eqref{lutz:statistic} has to be chosen  appropriately in dependence on $\Phi$
in order to guarantee that the MR-statistic  $T_N(\eps)$ is bounded almost
surely. A sufficient condition on $\gamma$ has  for example been given in
\cite[Thm 7.1]{frick:DueWal08}

\begin{proposition}\label{lutz:teststatfinite}
If there exists constants $A,B>0$ such that
\begin{equation}\label{mrnorm:caprequ}
  D(u \delta, \set{\phi \in \Phi~:~ \norm{\phi}\leq \delta}) \leq A u^{-B}
  \delta^{-\gamma},\quad \text{ for all } u,\delta \in (0,1]
\end{equation}
then almost surely $\sup_{N\in\N} T_N(\eps) < \infty$. Here $D$  denotes the
capacity number (cf. Definition  \ref{lutz:capcovdef}).
\end{proposition}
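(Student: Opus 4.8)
The plan is to reduce the assertion to the almost sure finiteness of the supremum of a single Gaussian process and then to control that supremum by a dyadic slicing of $\Phi$ together with a chaining argument on each slice. Since the fidelity function $t_N(s,r)=s-\sqrt{-2\gamma\log r}$ in \eqref{lutz:statistic} does not depend on $N$, the random variables $T_N(\eps)$ are nondecreasing in $N$ and $\sup_{N\in\N}T_N(\eps)=\Psi:=\sup_{n\in\N}\bigl(\abs{\eps(\phi_n^*)}-\sqrt{-2\gamma\log\norm{\phi_n}}\bigr)$, so it suffices to prove $\Psi<\infty$ almost surely. I would set $\Phi_m:=\set{\phi\in\Phi:2^{-m-1}<\norm\phi\le 2^{-m}}$ for $m\ge1$ and $\Phi_0:=\set{\phi\in\Phi:1/2<\norm\phi\le1}$, so that $\Phi=\bigcup_{m\ge0}\Phi_m$ and on $\Phi_m$ the additive correction is bounded below by $p_m:=\sqrt{2\gamma m\log2}$. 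Hence $\Psi\le\sup_{m\ge0}\bigl(\sup_{\phi\in\Phi_m}\abs{\eps(\phi^*)}-p_m\bigr)$, and the whole problem comes down to tail bounds for $\sup_{\phi\in\Phi_m}\abs{\eps(\phi^*)}$ that are good enough to be summed over $m$.

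Next I would translate the capacity bound \eqref{mrnorm:caprequ} into metric-entropy estimates for the canonical metric of the process $\phi\mapsto\eps(\phi^*)$, which is $d(\phi,\psi)=\norm{\phi^*-\psi^*}$ (using $\eps(\phi^*)\sim\Norm(0,1)$ and $\Cov{\eps(\phi^*)}{\eps(\psi^*)}=\inner{\phi^*}{\psi^*}$). On $\Phi_m$ one has $\norm{\phi^*-\psi^*}\le 2\norm{\phi-\psi}/\min(\norm\phi,\norm\psi)\le 2^{m+2}\norm{\phi-\psi}$, so a $2^{-m-2}\eps$-net of $\Phi_m\subseteq\set{\phi\in\Phi:\norm\phi\le2^{-m}}$ in the norm of $V$ is an $\eps$-net for $d$; invoking \eqref{mrnorm:caprequ} with $\delta=2^{-m}$ and $u=\eps/4$ (and the elementary fact that a maximal packing is a net) yields $N(\Phi_m,d,\eps)\le A4^B\eps^{-B}2^{m\gamma}$ for $\eps\in(0,4]$. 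For each fixed $m$ the corresponding entropy integral is finite, so by Dudley's inequality $\sup_{\phi\in\Phi_m}\abs{\eps(\phi^*)}<\infty$ almost surely; this takes care of the slices with small index.

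Now fix a slice $m\ge1$, put $\eps_m:=m^{-1/2}$, choose an $\eps_m$-net $\mathcal N_m$ of $\Phi_m$ for $d$ with $\#\mathcal N_m\le A4^B m^{B/2}2^{m\gamma}$ and the nearest-point map $\pi_m$, and decompose, for $M>0$, $\Prob\bigl(\sup_{\phi\in\Phi_m}\abs{\eps(\phi^*)}>p_m+M\bigr)\le I_m+II_m$ with $I_m=\Prob\bigl(\max_{\psi\in\mathcal N_m}\abs{\eps(\psi^*)}>p_m+M/2\bigr)$ and $II_m=\Prob\bigl(\sup_{\phi\in\Phi_m}\abs{\eps(\phi^*)-\eps(\pi_m\phi^*)}>M/2\bigr)$. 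For $I_m$, a union bound with $\Prob(\abs{\Norm(0,1)}>x)\le e^{-x^2/2}$, the inequality $(p_m+M/2)^2\ge p_m^2+p_m M+M^2/4$, and the identity $e^{-p_m^2/2}=2^{-\gamma m}$ give $I_m\le A4^B e^{-M^2/8}\,m^{B/2}e^{-M\sqrt{2\gamma m\log2}/2}$; here it is essential that the exponent $\gamma$ in the correction is precisely the one in \eqref{mrnorm:caprequ}, so that the factor $2^{m\gamma}$ from $\#\mathcal N_m$ is exactly killed by $e^{-p_m^2/2}$, leaving a summable series in $m$. For $II_m$, Dudley's inequality applied to the remainder process (canonical-metric radius $\le2\eps_m$, entropy dominated by that of $\Phi_m$) gives $\E{\sup_{\phi\in\Phi_m}\abs{\eps(\phi^*)-\eps(\pi_m\phi^*)}}\le C\eps_m\sqrt{m\gamma\log2}+C'=C\sqrt{\gamma\log2}+C'=:C_*$, a finite constant because $\eps_m\sqrt m=1$; the Borell--TIS concentration inequality with variance proxy $\le\eps_m^2=1/m$ then yields $II_m\le 2e^{-m(M/2-C_*)^2/2}$ for $M>2C_*$. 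Thus $\sum_{m\ge1}(I_m+II_m)<\infty$ for every sufficiently large $M$, so by Borel--Cantelli, almost surely $\sup_{\phi\in\Phi_m}\abs{\eps(\phi^*)}\le p_m+M$ for all but finitely many $m$; together with the almost sure finiteness of $\sup_{\phi\in\Phi_m}\abs{\eps(\phi^*)}$ on the remaining (finitely many) slices and on $\Phi_0$, this gives $\Psi<\infty$ almost surely, i.e.\ $\sup_{N\in\N}T_N(\eps)<\infty$ almost surely.

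The main obstacle is the constant bookkeeping in the estimate of $I_m$ and the coupled choice of the net resolution: one must keep the cross term $p_m M$ in $(p_m+M/2)^2$ — bounding it merely by $p_m^2+M^2/4$ loses the $m$-dependence and makes the series diverge — and one must take $\eps_m\to0$ slowly enough that the chaining remainder $II_m$ has bounded mean but fast enough that its concentration, with the shrinking variance proxy $\eps_m^2$, is summable; $\eps_m\asymp m^{-1/2}$ is what balances these. The metric conversion $\norm{\phi^*-\psi^*}\le 2^{m+2}\norm{\phi-\psi}$ on $\Phi_m$ is elementary but indispensable, since \eqref{mrnorm:caprequ} is formulated in the norm of $V$ whereas the process lives on the normalised dictionary. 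Alternatively, once \eqref{mrnorm:caprequ} is recognised as the capacity hypothesis used there, the statement follows from \cite[Thm.~7.1]{frick:DueWal08}.
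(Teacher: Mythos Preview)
Your proposal is correct, and in fact goes well beyond what the paper does: the paper does not give any proof of this proposition, but simply records it as a direct consequence of \cite[Thm.~7.1]{frick:DueWal08} (this is announced in the sentence immediately preceding the statement). You yourself note this alternative in your final line, so your proposal strictly contains the paper's argument.

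Your self-contained route---dyadic slicing $\Phi_m=\{2^{-m-1}<\norm\phi\le2^{-m}\}$, converting the $V$-norm capacity bound \eqref{mrnorm:caprequ} into a $d$-metric entropy bound on each slice via $\norm{\phi^*-\psi^*}\le2^{m+2}\norm{\phi-\psi}$, then a net/chaining split with Borell--TIS on the remainder---is essentially the skeleton of the D\"umbgen--Walther proof specialised to the present statistic, and the key cancellation you isolate (the factor $2^{m\gamma}$ from the net cardinality against $e^{-p_m^2/2}=2^{-\gamma m}$) is exactly why the exponent $\gamma$ in \eqref{lutz:statistic} must match the one in \eqref{mrnorm:caprequ}. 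What your write-up buys over the bare citation is a transparent explanation of this matching; what the citation buys is brevity and the avoidance of the bookkeeping you flag (the cross term $p_mM$, the choice $\eps_m\asymp m^{-1/2}$, and the absolute-value version of Borell--TIS, which needs a symmetrisation or a doubling of the index set that you gloss over). Either is acceptable here.
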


\begin{corollary}\label{lutz:consistcor}
Let $u^\dagger \in U$ be a $J$-minimising solution of
\eqref{intro:lineqn} where $g\in \overline{\spa{\Phi}}$ and $\gamma>0$ be
chosen such that the assumption of Proposition \ref{lutz:teststatfinite} is
satisfied. Moreover, assume that 
\begin{equation*}
\sigma_k^2 \min( \min_{1\leq n\leq N_k}
\log\left(\norm{\phi_n}\right), \log \alpha_k) \ra 0.
\end{equation*}
 Then, the SMRE $\hat u_k
= \hat u_k(\alpha_k)$ almost surely satisfies \eqref{gen:conseqn}.
\end{corollary}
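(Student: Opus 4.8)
The plan is to derive this as a direct specialisation of Theorem \ref{gen:consist} to the MR-statistic defined by \eqref{lutz:statistic}. First I would record the two ingredients that the theorem requires. The almost-sure boundedness $\sup_{N\in\N} T_N(\eps) < \infty$ is precisely the conclusion of Proposition \ref{lutz:teststatfinite}, which holds by the hypothesis that $\gamma$ has been chosen so that the capacity bound \eqref{mrnorm:caprequ} is satisfied. So only the parameter-choice condition \eqref{gen:paramchoice}, namely $\zeta_k \ra 0$, needs to be checked. For the statistic \eqref{lutz:statistic} one has $\lambda_N(r) = -\sqrt{-2\gamma\log r}$, independent of $N$; hence
\begin{equation*}
  \inf_{1\leq n\leq N_k} \lambda_{N_k}(\norm{\phi_n}) = -\sqrt{-2\gamma \max_{1\leq n\leq N_k} \log\norm{\phi_n}} = -\sqrt{2\gamma}\,\Big(-\min_{1\leq n\leq N_k}\log\norm{\phi_n}\Big)^{1/2},
\end{equation*}
using that $\norm{\phi_n}\leq 1$ makes each $\log\norm{\phi_n}\leq 0$. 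Therefore
\begin{equation*}
  \zeta_k = \sigma_k\max\!\Big(\sqrt{2\gamma}\,\big(-\min_{1\leq n\leq N_k}\log\norm{\phi_n}\big)^{1/2},\ \sqrt{-\log\alpha_k}\Big),
\end{equation*}
and $\zeta_k^2$ is, up to the fixed factor $\max(2\gamma,1)$, comparable to $\sigma_k^2\max\big(-\min_{1\leq n\leq N_k}\log\norm{\phi_n},\ -\log\alpha_k\big)$, i.e. to $-\sigma_k^2\min\big(\min_{1\leq n\leq N_k}\log\norm{\phi_n},\ \log\alpha_k\big)$.

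The second step is then immediate: the corollary's hypothesis $\sigma_k^2\min\big(\min_{1\leq n\leq N_k}\log\norm{\phi_n},\ \log\alpha_k\big)\ra 0$ says exactly that this last quantity tends to $0$, hence $\zeta_k\ra 0$, which is \eqref{gen:paramchoice}. All the standing assumptions of Theorem \ref{gen:consist} — Assumptions \ref{defs:basicass} and \ref{gen:assK}, and $g\in\overline{\spa\Phi}$ — are in force by the blanket hypotheses of this section together with the assumption made on $u^\dagger$. Applying Theorem \ref{gen:consist} then yields \eqref{gen:conseqn} (and in fact also \eqref{gen:imgconseqn}), which is the claim.

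I do not expect any genuine obstacle here; the only point requiring a little care is the bookkeeping of signs, since the $\lambda_N$ are negative and the corollary is phrased with a $\min$ of logarithms (themselves negative) rather than with the $\max$ and square root appearing in \eqref{gen:paramchoice}. One should state explicitly that $-\sqrt{-2\gamma\log r}$ is increasing in $r$ on $(0,1]$ so that the infimum over $n$ is attained at the smallest $\norm{\phi_n}$, i.e. at the most negative $\log\norm{\phi_n}$, giving the $\min_{1\leq n\leq N_k}\log\norm{\phi_n}$ in the statement. Once this identification is made, squaring inside the $\max$ and absorbing the constants $2\gamma$ and $1$ turns the condition into the stated one, and the corollary follows from Theorem \ref{gen:consist} with no further work.
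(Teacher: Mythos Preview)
Your proposal is correct and matches the paper's intended approach: the paper states Corollary~\ref{lutz:consistcor} without proof, treating it as an immediate specialisation of Theorem~\ref{gen:consist} once Proposition~\ref{lutz:teststatfinite} supplies $\sup_N T_N(\eps)<\infty$, and you have accurately filled in the translation of \eqref{gen:paramchoice} for the statistic \eqref{lutz:statistic}. Your care with the signs---noting that $\lambda_N(r)=-\sqrt{-2\gamma\log r}$ is increasing on $(0,1]$ so that $-\inf_{1\leq n\leq N_k}\lambda_{N_k}(\norm{\phi_n})=\sqrt{2\gamma}\,(-\min_{1\leq n\leq N_k}\log\norm{\phi_n})^{1/2}$ and hence $\zeta_k^2$ is comparable to $-\sigma_k^2\min(\min_n\log\norm{\phi_n},\log\alpha_k)$---is exactly the bookkeeping the paper leaves implicit.
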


In order to apply the convergence rate results in Theorem \ref{gen:thmrates}, it
is necessary that a $J$-minimising solution $u^\dagger$ of \eqref{intro:lineqn}
satisfies the source condition \eqref{defs:solution:source} with a source element
$p^\dagger$ that can be approximated by the dictionary $\Phi$
sufficiently well (cf. Assumption \ref{gen:assSE}). We illustrate the
assertion of Theorem \ref{gen:thmrates} when  $U = V = \Ls{2}([0,1]^d)$ ($d\geq
1$) and when $\Phi$ consists of a countable selection of indicator functions
on cubes in $[0,1]^d$ (cf. Example \ref{intro:ex}). 

First, we shall examine when Proposition
\ref{lutz:teststatfinite} holds. To this end, we will focus first on the
(uncountable) collection $\Phi_s$ of indicator functions on cubes in $[0,1]^d$.
Then, according to Proposition \ref{mrnorm:allsetass}, the assumptions of
Proposition \ref{lutz:teststatfinite} are satisfied for $\Phi = \Phi_\text{s}$
and  $\gamma = d$. Particularly, it follows that the assertion of
Proposition \ref{lutz:teststatfinite} also holds for arbitrary (countable)
sub-systems $\Phi\subset\Phi_s$, that is the statistic
\begin{equation}\label{lutz:mrstat}
  T_N(\eps) = \max_{1\leq n\leq N} \abs{\eps(\chi_{Q_n})} - \sqrt{-d
  \log\abs{Q_n} }\quad\text{ where }\quad\chi_{Q_n}\in \Phi
\end{equation}
stays bounded a.s. as $N\ra\infty$ (note here, that $\norm{\chi_{Q_n}} =
\sqrt{\abs{Q}}$).

Next, we study Assumption \ref{gen:assSE} in the present setting. Let
$\mathcal{P} = \set{Q_1, Q_2, \ldots}$ be a countable
system of cubes and set $\Phi = \set{\chi_{Q_n}~:~ n\in\N}$. We shall assume
that $\mathcal{P}$ satisfies the conditions of Lemma \ref{reg:approx} (where
$\Omega = [0,1]^d$ and $A_i = Q_i$ for $i\in\N$). Let $\set{n_l}_{l\in\N}$ and
$\set{\delta_l}_{l\in\N}$ be defined accordingly. Moreover, we define
\begin{equation*}
  \eps_l = \inf_{n_l<j\leq n_{l+1}} \sqrt{\abs{Q}} = \inf_{n_l<j\leq
  n_{l+1}} \norm{\chi_{Q_j}},
\end{equation*}
where we assume that $\set{\eps_l}_{l\in\N}$ is non-increasing. This means that we
partition the set $[0,1]^d$ into disjoint sub-cubes
$\set{Q_j}_{n_l<j\leq n_{l+1}}$ whose size (or \emph{scale}) is bounded by
$[\eps_l, \delta_l]$. It is more natural to formulate convergence rate results in
terms of the total number $m$ of used scales rather than in the total number of
sub-cubes $N = N(m) = n_{m+1}$. Following  Remark
\ref{gen:ratesrem} and applying Lemma \ref{reg:approx} we therefore define
for a given continuous function $p^\dagger:[0,1]^d\ra \R$
\begin{equation}\label{reg:sequences}
  m_k := \inf\set{ m\in\N~:~ \frac{m+1}{\sum_{\nu = 0}^m
  \omega^{-2}(\delta_\nu, p^\dagger)} \leq -2\sigma_k^2 \log \eps_m
  }\;\text{ and } \; \eta_k := \sigma_k \sqrt{-2\log \eps_{m_k}}.
\end{equation}
Here $\omega(\cdot, p^\dagger)$ denotes the modulus of continuity of
$p^\dagger$ (cf. Definition \ref{app:hoelder}). With this and the general
convergence rate result in Theorem \ref{gen:thmrates} we  obtain

\begin{corollary}\label{reg:convrate}
Let $u^\dagger\in\Ls{2}([0,1]^d)$ be a $J$-minimising solution of
\eqref{intro:lineqn} where $g \in \overline{\spa{\Phi}}$ and that satisfies the
source condition \eqref{defs:solution:source} with source element $p^\dagger
\in C([0,1]^d)$.  Moreover, let $m_k$ and
$\eta_k$ be defined as in \eqref{reg:sequences}. If
\begin{equation*}
  \lim_{k\ra\infty} \eta_k = 0\quad\text{ and }\quad \alpha_k :=
  e^{-\left(\frac{\kappa \eta_k}{\sigma_k}\right)^2} = \eps_{m_k}^{-2\kappa^2}
  \in \ell^1(0,1)
\end{equation*}
for a constant $\kappa > 0$, then the SMRE $\hat u_k = \hat
u_{N(m_k)}(\alpha_k)$ almost surely satisfy \eqref{gen:rateeqn}.
\end{corollary}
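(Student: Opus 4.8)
The plan is to obtain Corollary \ref{reg:convrate} as a direct specialisation of Theorem \ref{gen:thmrates}, so the core of the argument is to verify that the hypotheses of that theorem are met for the present choice of $K$, $J$, $\Phi$ and the parameter sequences in \eqref{reg:sequences}. First I would record that, by the discussion preceding the corollary, the choice $\gamma = d$ together with Proposition \ref{lutz:teststatfinite} (via Proposition \ref{mrnorm:allsetass}) guarantees $\sup_{N\in\N} T_N(\eps) < \infty$ almost surely, so the standing boundedness hypothesis of Theorem \ref{gen:consist} holds. Next I would check the parameter-choice condition \eqref{gen:paramchoice}: with $t_N(s,r) = s - \sqrt{-2\gamma\log r}$ one has $\lambda_N(\norm{\chi_{Q_n}}) = -\sqrt{-2\gamma\log\norm{\chi_{Q_n}}}$, hence $-\sigma_k\inf_{1\le n\le N_k}\lambda_{N_k}(\norm{\phi_n}) = \sigma_k\sqrt{-2\log\eps_{m_k}} = \eta_k$, which tends to $0$ by hypothesis; together with the summability of $\alpha_k = \eps_{m_k}^{-2\kappa^2}$, which forces $\sigma_k\sqrt{-\log\alpha_k} = \kappa\eta_k \to 0$, this yields $\zeta_k \to 0$ as required.

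The second block of work is to verify Assumption \ref{gen:assSE}, i.e. the approximation properties of the source element $p^\dagger$ by the dictionary $\Phi = \set{\chi_{Q_n}}$. Here I would invoke Lemma \ref{reg:approx} applied with $\Omega = [0,1]^d$ and $A_i = Q_i$: since $p^\dagger$ is continuous on the compact cube it is uniformly continuous, and the lemma provides coefficients $b_{n,N}$ (piecewise-constant approximants on the disjoint sub-cubes $\set{I_j}_{n_l < j \le n_{l+1}}$) with uniformly bounded $\ell^1$-norm $\sum_n \abs{b_{n,N}}$ and with $\err_N(p^\dagger)$ controlled in terms of $\sum_{\nu=0}^m \omega^{-2}(\delta_\nu, p^\dagger)$ and $m$. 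The definition of $m_k$ in \eqref{reg:sequences} is precisely the crossing time at which this approximation error is balanced against $\sigma_k^2(-\log\eps_{m_k})$, which is exactly the recipe of Remark \ref{gen:ratesrem}(i); consequently $\err_{N(m_k)}(p^\dagger) \le \eta_k$, so $\eta_k = \max(\zeta_k, \err_{N(m_k)}(p^\dagger))$ up to a constant factor, and $N(m_k) \to \infty$. This places us squarely in the scope of Theorem \ref{gen:thmrates} and its conclusion \eqref{gen:rateeqn} transfers verbatim to $\hat u_k = \hat u_{N(m_k)}(\alpha_k)$.

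The main obstacle, and the step deserving the most care, is the bookkeeping in the application of Lemma \ref{reg:approx}: one must make sure that the multi-scale partition encoded by $\set{n_l}$, $\set{\delta_l}$ and $\set{\eps_l}$ is compatible with the indexing of $\Phi$ (so that the partial sum $\sum_{n=1}^{N(m)} b_{n,N}\chi_{Q_n}$ genuinely realises the $m$-scale approximant on the $I_j$'s), that the monotonicity assumption on $\set{\eps_l}$ is not vacuous, and that the quantity $\frac{m+1}{\sum_{\nu=0}^m \omega^{-2}(\delta_\nu, p^\dagger)}$ indeed decreases to $0$ so that $m_k$ is well-defined and $m_k \to \infty$ as $\sigma_k \to 0$ — this is where the hypothesis $\eta_k \to 0$ is used, since $\eta_k \to 0$ together with $\eps_{m_k} \le 1$ forces the balancing scale to refine. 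Once this matching is set up cleanly, the remaining verifications (Lipschitz bound $L = 1$, the inequality \eqref{gen:ineq} with $c_1 = 1-\sigma_0$, $c_2 = 1$ as in Example \ref{gen:mrstatex}, and $g \in \overline{\spa\Phi}$ which is assumed) are immediate, and the corollary follows.
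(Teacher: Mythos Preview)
Your proposal is correct and follows exactly the route the paper intends: the corollary is stated right after the sentence ``With this and the general convergence rate result in Theorem \ref{gen:thmrates} we immediately obtain'', and your argument supplies precisely the verifications (boundedness of $T_N(\eps)$ via Proposition \ref{lutz:teststatfinite}, Assumption \ref{gen:assSE} via Lemma \ref{reg:approx}, and the parameter balancing of Remark \ref{gen:ratesrem}) that the paper leaves implicit. The only cosmetic slip is the dropped factor $\sqrt{\gamma}=\sqrt{d}$ when identifying $-\sigma_k\inf_n\lambda_{N_k}(\norm{\phi_n})$ with $\eta_k$, but since this is a fixed constant it is absorbed in the ``up to a constant factor'' you already note and does not affect any conclusion.
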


\begin{example}\label{lutz:dyadicpart} We consider the system of all dyadic
  partitions $\mathcal{P} = \mathcal{P}_2$ of $[0,1]^d$ as in Example
  \ref{mrnorm:dyadicsetass}. In particular, we note that the assumptions of Lemma
  \ref{reg:approx} are fulfilled with $n_l = (2^{d(l+1)} - 1)\slash (2^d-1)$,
  $\delta_l = 2^{-l} \sqrt{d}$ and $\eps_l = 2^{-ld\slash 2}$.
  
  If $p^\dagger\in \mathcal{H}_\beta([0,1]^d)$ for $0<\beta\leq 1$, then there
  exists a constant $Q = Q(p^\dagger) >0$ such that $\omega(\delta_l,
  p^\dagger)\leq Q \delta_l^\beta$. This shows that
  \begin{equation*}
    \frac{m+1}{\sum_{\nu = 0}^m
    \omega^{-2}(\delta_\nu, p^\dagger)} \leq Q^2 d^\beta(2^{2\beta} - 1 )
    \frac{m+1}{2^{2\beta (m+1)}-1}
  \end{equation*}
  for $m\in\N$ large enough. From this and \eqref{reg:sequences} it is easy to
  see, that
  \begin{equation*}
    m_k +1 \sim \frac{1}{2\beta \log 2}\log\left(\frac{Q^2 d^\beta (2^{2\beta}
    -1)}{d \sigma_k^2 \log 2  } +1 \right)\;\text{ and }\; \eta_k \sim
    \sigma_k\sqrt{-\log \sigma_k}.
  \end{equation*}
  Thus, if there exists a constant $\kappa > 0$ such that
  \begin{equation*}
    \alpha_k = e^{-\left(\frac{\kappa \eta_k}{\sigma_k}\right)^2} =
    \sigma_k^{\kappa^2}
  \end{equation*}
  is summable and if the true  $J$-minimising solution  $u^\dagger$ satisfies
  the source condition \eqref{defs:solution:source} with  source element
  $p^\dagger \in \mathcal{H}_\beta([0,1])$, then it follows that the
  SMRE $\hat u_k = \hat u_{N(m_k)}(\alpha_k)$  almost surely satisfy
  \eqref{gen:rateeqn} with $\eta_k=\sigma_k\sqrt{-\log \sigma_k}$.
\end{example}

\subsection{TV-Regularisation for Imaging}\label{tv}

In this section we will study the theoretical properties of SMRE for the special
case where $J$ denotes the \emph{total-variation semi-norm} of measurable,
bi-variate functions. It has been argued (e.g. in \cite{frick:RudOshFat92}) that
this has a particular appeal for linear inverse problems arising in imaging (such as
deconvolution), since discontinuities along curves (edges, that is) are not
smoothed by minimising $J$.


We assume henceforth that $\Omega\subset \R^2$ is an open and bounded
domain with Lipschitz-boundary $\partial \Omega$ and outer unit normal $\nu$.
Moreover,  we set $U = \L{2}$ and define $\BV$ to be the collection of $u\in U$
whose derivative $\D u$ (in the sense of distributions) is a signed
$\R^2$-valued Radon-measure with finite total-variation $\abs{\D u}$, that is
\begin{equation*}
  \abs{\D u}(\Omega)= \sup_{\substack{\psi \in C_0^1(\Omega, \R^2) \\
  \abs{\psi} \leq 1}} \int_\Omega \diw{\psi} u \dx <\infty.
\end{equation*}
We note that the norm $\norm{u}_{\TV}:= \norm{u}_{\Ls{1}} + \abs{\D u}(\Omega)$
turns $\BV$ into a Banach-space and that with this norm $\BV$ is continuously
embedded into $\L{2}$. The embedding is even compact if $\L{2}$ is replaced by
$\L{p}$ with $p < 2$ (a proof of these embedding results can be found in
\cite[Thm. 2.5]{frick:AV94}. For an exhaustive treatment of $\BV$ see
\cite{frick:Z89}). With this, we define
\begin{equation}\label{tv:tv}
  J(u) = \begin{cases}
\abs{\D u}(\Omega) & \text{ if }u\in\BV \\
+\infty & \text{ else.}
\end{cases}
\end{equation}
The functional $J$ is convex and proper and, as it was shown e.g. in
\cite[Thm. 2.3]{frick:AV94}, $J$ is lower semi-continuous on $\L{2}$. This
shows, that $J$ satisfies Assumption \ref{defs:basicass} (ii). Next, we examine
Assumption \ref{gen:assK}:

\begin{lemma}\label{tv:compact}
If there exists $n_0\in \N$ such that $\abs{\inner{K\mathbf{1}}{\phi_{n_0}}} >
0$ then Assumption \ref{gen:assK} holds. Here, $\mathbf{1}$ denotes the constant
$1$-function on $\Omega$.
\end{lemma}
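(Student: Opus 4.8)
The plan is to exploit that the total-variation functional $J$ is coercive on $\L{2}$ up to the one-dimensional subspace $\ker(\D)=\R\mathbf{1}$ of constant functions, and that the hypothesis $\abs{\inner{K\mathbf{1}}{\phi_{n_0}}}>0$ furnishes exactly one linear constraint that pins down this remaining direction. Accordingly I would take $N_0:=n_0$ in Assumption \ref{gen:assK} and fix $c\in\R$; for $c<0$ the set $\Lambda(c)$ is empty and hence trivially bounded, so one may assume $c\ge 0$.

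First I would invoke the Poincar\'e--Wirtinger inequality for $\BV$: since $\Omega\subset\R^2$ is a bounded (connected) Lipschitz domain and the Sobolev conjugate $N/(N-1)$ equals $2$ in dimension $N=2$, there is a constant $C_P>0$ with
\begin{equation*}
  \norm{u-m(u)\mathbf{1}}_{\Ls{2}}\le C_P\,\abs{\D u}(\Omega),\qquad
  m(u):=\frac{1}{\abs{\Omega}}\int_\Omega u\,\dx,
\end{equation*}
for every $u\in\BV$ (see, e.g., \cite{frick:EG92}). For $u\in\Lambda(c)$ one has $\abs{\D u}(\Omega)=J(u)\le c$ and, since $n_0\le N_0$, also $\abs{\inner{Ku}{\phi_{n_0}^*}}\le c$.

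Next I would control the mean. Writing $u=(u-m(u)\mathbf{1})+m(u)\mathbf{1}$ and using linearity of $K$ and of the inner product gives
\begin{equation*}
  m(u)\,\inner{K\mathbf{1}}{\phi_{n_0}^*}=\inner{Ku}{\phi_{n_0}^*}-\inner{K\big(u-m(u)\mathbf{1}\big)}{\phi_{n_0}^*}.
\end{equation*}
The last term is at most $\norm{K}\,\norm{u-m(u)\mathbf{1}}_{\Ls{2}}\le\norm{K}\,C_P\,c$ in absolute value (using $\norm{\phi_{n_0}^*}=1$), while $\abs{\inner{K\mathbf{1}}{\phi_{n_0}^*}}=\abs{\inner{K\mathbf{1}}{\phi_{n_0}}}/\norm{\phi_{n_0}}>0$ by hypothesis, so
\begin{equation*}
  \abs{m(u)}\le\frac{c+\norm{K}\,C_P\,c}{\abs{\inner{K\mathbf{1}}{\phi_{n_0}^*}}}=:M(c)<\infty .
\end{equation*}
One then concludes by the triangle inequality that $\norm{u}_{\Ls{2}}\le\norm{u-m(u)\mathbf{1}}_{\Ls{2}}+\abs{m(u)}\,\abs{\Omega}^{1/2}\le C_P c+M(c)\abs{\Omega}^{1/2}$, a bound independent of $u\in\Lambda(c)$; hence $\Lambda(c)$ is bounded in $U=\L{2}$, which is Assumption \ref{gen:assK}.

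The main obstacle is conceptual rather than computational: one must recognise that the lack of coercivity of $J$ on $\L{2}$ is confined to the constants and that the Poincar\'e--Wirtinger inequality controls everything modulo them, so that a single non-degenerate moment $\inner{K\mathbf{1}}{\phi_{n_0}}$ is enough. One point to handle with care is that the Sobolev--Poincar\'e estimate used must be the sharp $\Ls{N/(N-1)}$ one (here $\Ls{2}$, since $N=2$); it does \emph{not} follow from the compact embedding $\BV\hookrightarrow\Ls{p}$, $p<2$, because that embedding fails to be compact at the critical exponent $p=2$.
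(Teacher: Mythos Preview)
Your proof is correct and follows essentially the same route as the paper's: both apply the Poincar\'e--Wirtinger inequality to bound $\norm{u-m(u)\mathbf{1}}_{\Ls{2}}$ by $J(u)$, then use the single constraint $\abs{\inner{Ku}{\phi_{n_0}^*}}\le c$ (with the splitting $u=(u-m(u)\mathbf{1})+m(u)\mathbf{1}$) to bound $\abs{m(u)}$, and conclude by the triangle inequality. Your explicit choice $N_0:=n_0$ and the remark on the critical Sobolev exponent are slightly more careful than the paper's presentation, but the argument is the same.
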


\begin{proof}
 Let $c\in \R$ and $\set{u_k}_{k\in\N}\subset \Lambda(c)$. Then in
  particular it follows that $\sup_{k\in\N} J(u_{k_n}) \leq c < \infty$  and thus we find with
  Poincar\'e's inequality \citep[see][Thm. 5.11.1]{frick:Z89}
  \begin{equation*}
    \norm{u_k - \bar u_k}_{\Ls{2}} \leq c_1 J(u_k) \leq c_2 < \infty
  \end{equation*}
  for suitable constants $c_1, c_2 \in\R$, where $\bar u_k =
  \abs{\Omega}^{-1} \int_\Omega u_k(\tau) \diff \tau $. Now choose $\phi
  \in \set{\phi_1,\dots,\phi_N}$ and observe that
  \begin{multline*}
    \frac{\abs{\bar u_k} \abs{\inner{\phi}{K\mathbf{1}}}}{\norm{\phi}} =
    \frac{\abs{\inner{\phi}{K \bar u_k}}}{\norm{\phi}}  \leq \frac{\abs{\inner{
    \phi }{K(\bar u_k - u_k)}}}{\norm{\phi}} + \frac{\abs{\inner{\phi}{K u_k}}}{\norm{\phi}} \\
    \leq \norm{K} \norm{u_k - \bar u_k}_{\Ls{2}} + \max_{1\leq n\leq N}
    \frac{\abs{\inner{Ku_k}{\phi_n}}}{\norm{\phi_n}}\leq \norm{K} c_2 + c.
  \end{multline*}
  Let $1\leq n_0\leq N$ be such that
  $\abs{\inner{K\mathbf{1}}{\phi_{n_0}}} =: \gamma > 0$. Then,
  $\abs{\bar u_n} \leq (\norm{K} c_2 + c)\norm{\phi_{n_0}} \slash \gamma =:c_3$
  and we find 
  \begin{equation*}
    \norm{u_n}_{\Ls{2}}  \leq \left(\norm{u_n
    - \bar u_n}_{\Ls{2}} + \norm{ \bar u_n}_{\Ls{2}} \right)
    \leq c_2 + c_3 \abs{\Omega}.
  \end{equation*}.
\end{proof}

We note that the assumptions in Lemma \ref{tv:compact} already imply the weak
compactness of the sets \eqref{defs:comset} and thus guarantee existence of a
$J$-minimising solution of \eqref{intro:lineqn}. From the above cited embedding
properties of the space $\BV$ it is easy to derive an improved version of the
consistency result in Theorem \ref{gen:consist}.
 
\begin{corollary}\label{tv:consist}
Let $g\in\overline{\spa{\Phi}}$ and assume that $u^\dagger\in\BV$ is the unique
$J$-minimising solution of \eqref{intro:lineqn}. Moreover, let
$\set{\alpha_k}_{k\in\N}$ and $\set{N_k}_{k\in\N}$ be as in Theorem
\ref{gen:consist} and define $\hat u_k = \hat u_{N_k}(\alpha_k)$. Then,
additionally to the assertions in Theorem \ref{gen:consist} we have that
\begin{equation*}
  \lim_{k\ra\infty} \norm{\hat u_k -u^\dagger}_{\Ls{p}} = 0\quad\text{ a.s.}
\end{equation*}
for every $1\leq p<2$.
\end{corollary}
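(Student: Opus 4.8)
The plan is to combine the conclusions already supplied by Theorem \ref{gen:consist} with the compactness of the embedding $\BV\hookrightarrow\L{p}$ for $p<2$ recalled above (\cite[Thm.~2.5]{frick:AV94}). First I would note that $\set{\hat u_k}$ is bounded in $\BV$: Theorem \ref{gen:consist} already gives $\sup_k\norm{\hat u_k}_{\Ls{2}}<\infty$ and $J(\hat u_k)=\abs{\D\hat u_k}(\Omega)\ra J(u^\dagger)<\infty$, and since $\Omega$ is bounded the former yields $\sup_k\norm{\hat u_k}_{\Ls{1}}<\infty$, so that $\sup_k\norm{\hat u_k}_{\TV}=\sup_k\bigl(\norm{\hat u_k}_{\Ls{1}}+\abs{\D\hat u_k}(\Omega)\bigr)<\infty$. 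Fix $1\le p<2$. By the compact embedding, every subsequence of $\set{\hat u_k}$ admits a further subsequence $\set{\hat u_{k_j}}$ that converges in $\L{p}$ and — being bounded in $\L{2}$ — also weakly in $\L{2}$, to one and the same limit $\bar u\in\BV$ (the two limits coincide because each of these convergences implies convergence in $\Tf'$).

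The second step is to identify $\bar u$ with $u^\dagger$. On one hand $J$ is convex and l.s.c. on $\L{2}$, hence weakly l.s.c., so $J(\bar u)\le\liminf_j J(\hat u_{k_j})=J(u^\dagger)$. On the other hand $K$ is bounded, hence weakly continuous, so $K\hat u_{k_j}\rightharpoonup K\bar u$ in $V$; together with \eqref{gen:imgconseqn} of Theorem \ref{gen:consist} — which, since $\zeta_k\ra0$, forces $\inner{\phi_n^*}{K\hat u_k}\ra\inner{\phi_n^*}{g}$ for each fixed $n$ — this gives $\inner{\phi_n^*}{K\bar u-g}=0$ for all $n$. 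Exploiting $g\in\overline{\spa\Phi}$ one concludes $K\bar u=g$, so $\bar u$ solves \eqref{intro:lineqn}; being in addition $J$-minimal it equals $u^\dagger$ by the uniqueness assumed in the statement. (This step — the identification of weak limit points with the $J$-minimising solution — is exactly what underlies the assertion $J(\hat u_k)\ra J(u^\dagger)$ of Theorem \ref{gen:consist}, so one may equally well invoke directly that $\hat u_k\rightharpoonup u^\dagger$ weakly in $\L{2}$.)

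Finally, since the accumulation point $u^\dagger$ is independent of the subsequence extracted, a standard subsequence argument yields convergence of the whole sequence: a relatively compact sequence in the metric space $\L{p}$ with a single accumulation point converges to it, so $\hat u_k\ra u^\dagger$ in $\L{p}$. All of this takes place on the almost-sure event on which the conclusions of Theorem \ref{gen:consist} hold, which proves the claim for every $1\le p<2$. I expect the only genuinely delicate point to be the passage from ``$\inner{\phi_n^*}{K\bar u-g}=0$ for all $n$'' to ``$K\bar u=g$'': one must use $g\in\overline{\spa\Phi}$ (equivalently, fall back on the identification of weak limit points with $J$-minimising solutions established inside Theorem \ref{gen:consist}); the remainder is a routine compactness argument built on the $\BV$-embedding.
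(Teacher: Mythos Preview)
Your proposal is correct and follows essentially the same route as the paper: bound $\set{\hat u_k}$ in $\BV$ via Theorem~\ref{gen:consist}, invoke the compact embedding $\BV\hookrightarrow\L{p}$ for $p<2$, and identify the (unique) accumulation point with $u^\dagger$ using that every weak $\L{2}$-cluster point is a $J$-minimising solution of \eqref{intro:lineqn}. The paper simply cites this last identification from (the proof of) Theorem~\ref{gen:consist} rather than re-deriving it, but otherwise the argument is the same.
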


\begin{proof}
  From Theorem \ref{gen:consist} it follows that $\set{\hat u_k}_{k\in\N}$ is
  bounded a.s. in $\L{2}$ and that each weak cluster point is a $J$-minimising
  solution of \eqref{intro:lineqn}. Since we assumed that $u^\dagger$ is the
  unique $J$-minimising solution of \eqref{intro:lineqn}, it follows that $\hat
  u_k \rightharpoonup  u^\dagger$ in $\L{2}$ a.s. and therefore also in $\L{p}$
  for each $1\leq p<2$.
  
  Since $\Omega$ is assumed to be bounded, it follows that $\L{2}$ is
  continuously embedded into $\L{1}$. Thus, it follows from Theorem
  \ref{gen:consist} that almost surely $\sup_{k\in\N} \norm{\hat u_k}_{\TV} <
  \infty$.  From the compact embedding $\BV\hookrightarrow \L{p}$ for $1\leq p<
  2$, it hence follows that $\set{\hat u_k}_{k\in\N}$ is compact in $\L{p}$.  Thus,
  the assertion follows, since weak and strong limits coincide.
\end{proof}

Unfortunately, the above embedding technique can not be used in order to improve
the convergence rate result in Theorem \ref{gen:thmrates} to strong
$\Ls{p}$-convergence and thus we have to settle for the general results in
Theorem \ref{gen:thmrates}.

We recall that a function $u \in \BV$ satisfies the source condition, if there
exists $\xi \in \ran(K^*)$ such that $\xi \in \partial J(u)$. It is important to
note, that in many applications the elements in $\ran(K^*)$ exhibit high
regularity such as continuity or smoothness. Thus it is of particular interest,
if such regular elements in $\partial J(u)$ exist. If $u$ is itself a smooth
function, application of Green's Formula and Example \ref{tv:subgr} yield
\citep[see also ][Lem.3.71]{frick:SchGraGroHalLen09}.

\begin{lemma}\label{tv:smoothsubgr}
Let $u\in C_0^1(\Omega)$ and set $E[u] = \set{x\in\Omega~:~ \nabla u(x) \not =
0}$. Assume that there exists $z\in C_0^1(\Omega, \R^2)$ with $\abs{z}\leq 1$
and 
\begin{equation*}
  z(x) = -\frac{\nabla u(x)}{\abs{\nabla u(x)}}\quad\text{ for }x\in E[u].
\end{equation*}
Then, $\xi:=\diw{z}\in \partial J(u)$.
\end{lemma}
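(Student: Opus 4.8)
The plan is to verify the two conditions in Lemma~\ref{tv:subgr} for the candidate subgradient $\xi := \diw{z}$, namely that $\norm{z}_{\Ls{\infty}}\leq 1$, that $\inner{z}{\nu}=0$ on $\partial\Omega$, and the crucial identity $\int_\Omega \xi u\dx = \abs{\D u}(\Omega)$. The first two are immediate: $\abs{z}\leq 1$ is assumed, and $z\in C_0^1(\Omega,\R^2)$ means $z$ has compact support in $\Omega$, so in particular $z$ vanishes near $\partial\Omega$ and thus $\inner{z}{\nu}=0$ there. Since $z\in C_0^1$ we have $\diw z \in C_0(\Omega)\subset \Ls{2}(\Omega)$, so $\xi$ is a legitimate element of $U$.

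The heart of the argument is the identity $\int_\Omega (\diw z)\, u\dx = \abs{\D u}(\Omega)$. First I would invoke Green's formula (integration by parts): because $z$ has compact support in $\Omega$ and $u\in C_0^1(\Omega)$, the boundary terms vanish and
\begin{equation*}
  \int_\Omega (\diw z)\, u\dx = -\int_\Omega z\cdot \nabla u \dx.
\end{equation*}
On the set $E[u]$ we have $z = -\nabla u/\abs{\nabla u}$, so $z\cdot\nabla u = -\abs{\nabla u}$ there; on $\Omega\setminus E[u]$ one has $\nabla u = 0$, so $z\cdot\nabla u = 0 = \abs{\nabla u}$ there as well. Hence $-z\cdot\nabla u = \abs{\nabla u}$ pointwise on all of $\Omega$, giving
\begin{equation*}
  \int_\Omega (\diw z)\, u\dx = \int_\Omega \abs{\nabla u}\dx.
\end{equation*}
Finally, since $u\in C_0^1(\Omega)$ is in particular a $\BV$ function whose distributional derivative is the absolutely continuous measure $\nabla u\,\dx$, we have $\abs{\D u}(\Omega) = \int_\Omega \abs{\nabla u}\dx$, which closes the chain of equalities. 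By Lemma~\ref{tv:subgr} this establishes $\xi = \diw z\in\partial J(u)$.

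The only subtlety — and the step I would be most careful with — is the handling of the set where $\nabla u$ vanishes, so that the pointwise identity $z\cdot\nabla u = -\abs{\nabla u}$ genuinely holds a.e.\ on $\Omega$ and not merely on $E[u]$; this is why the hypothesis only constrains $z$ on $E[u]$ yet still suffices. One should also note explicitly that the regularity assumptions ($z\in C_0^1$, $u\in C_0^1$) are exactly what makes Green's formula applicable with no boundary contribution, so the condition $\inner{z}{\nu}=0$ from Lemma~\ref{tv:subgr} is automatically met. No deeper machinery is needed; the proof is essentially the classical computation showing that $-\nabla u/\abs{\nabla u}$ is the ``correct direction'' for the $\TV$ subdifferential, adapted to the smooth setting.
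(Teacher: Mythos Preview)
Your proposal is correct and follows exactly the route the paper indicates: the paper states the lemma without a detailed proof, merely noting that ``application of Green's Formula and Lemma~\ref{tv:subgr} yield'' the result (and pointing to \cite[Lem.~3.71]{frick:SchGraGroHalLen09}). Your write-up fills in precisely those details --- verifying the three hypotheses of Lemma~\ref{tv:subgr} and using integration by parts together with the pointwise identity $-z\cdot\nabla u=\abs{\nabla u}$ on all of $\Omega$ --- so there is nothing to compare.
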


In many applications (such as imaging) the true solution $u\in\BV$ is not
continuous, as e.g. if $u$ is the indicator function of a smooth set
$D\subset\Omega$. The following examples shows that in this case we still have
$\partial J(u)\cap C_0^\infty(\Omega) \not = \emptyset$. For the analytical
details we refer to \cite[Ex. 3.74]{frick:SchGraGroHalLen09}

\begin{example}\label{tv:char}
  Assume that $D\subset\Omega$ is a closed and bounded set with
  $C^\infty$-boundary $\partial D$ and set $u = \chi_D$. The outward unit-normal $n$ of $D$ then can
  be extended to a compactly supported $C^\infty$-vector field $z$ with
  $\abs{z}\leq 1$. 
   Independent of the choice of the extension, we then have
   $\xi:=\diw{z}\in\partial J(u)$ and $\xi\in C^\infty_c(\Omega)$.
\end{example}

\begin{example}\label{tv:conv}
  We consider $\Omega = [0,1]^2$ and $V = \L{2}$. Moreover, we assume that
  $\mathcal{P}_2$ denotes the set of all dyadic partitions
  of $\Omega$ (cf. Example \ref{mrnorm:dyadicsetass}) and that $\Phi$ is
  the collection of indicator functions w.r.t. elements in $\mathcal{P}_2$.
  
  For a  function $k:\R^2 \ra \R$, we consider the
  \emph{convolution operator} on $U$ defined by
  \begin{equation*}
    (Ku)(x) = \int_{\R^2} k(x-y)\bar u(y) \dx\quad \text{ for }x \in \Omega
  \end{equation*}
  where $\bar u$ denotes the extension of $u$ on $\R^2$ by zero-padding. Assume
  further that $u^\dagger$ is the indicator function on a closed and bounded set
  $D\subset\Omega$ with $C^\infty$-boundary $\partial D$ and that $\xi \in
  \partial J(u^\dagger)$ is as in Example \ref{tv:char}. If the
  Fourier-transform $\mathcal{F}(k) =:\hat k$ of $k$ is non-zero a.e. in $\R^2$
  and if there exists $\beta\in(1,2]$ such that
  \begin{equation*}
    (1+\abs{\cdot}^2)^{-\beta\slash 2} \left(\hat \xi\slash \hat k\right) \in
    \Ls{2}(\R^2)\quad \text{ and }\quad \supp\left( p^\dagger :=
    \mathcal{F}^{-1}\left(\hat \xi\slash \hat k\right)\right) \subset \Omega,
  \end{equation*}
  then Assumption \ref{gen:assSE} is satisfied. To be more precise, we have
  that  $p^\dagger \in \mathcal{H}_{\beta-1}(\Omega)$ \cite[see][Thm.
  7.63]{frick:A75} and if there exists a constant $\kappa > 0$ such that
  $\alpha_k:=\sigma_k^{2\kappa}$ is summable it follows from Example
  \ref{lutz:dyadicpart} and Example \ref{tv:subgr} that
  \begin{equation*}
    \limsup_{k\ra\infty} \frac{\abs{\D\hat u_k}(\Omega) - \int_\Omega \xi \hat
    u_k \dx}{\sigma_k \sqrt{-\log \sigma_k}}
    =  \limsup_{k\ra\infty} \frac{\int_\Omega 1 - \cos(\gamma(\hat
    u_k, u^\dagger,x))\diff\abs{\D \hat u_k}(x)}{\sigma_k
    \sqrt{-\log \sigma_k }} < \infty\quad\text{ a.s.}
  \end{equation*}
  for the SMRE $\hat u_k = \hat u_{N_k}(\alpha_k)$ (where $N_k$ is as in Example
  \ref{lutz:dyadicpart}).
\end{example}

We close this section by two numerical examples that indicate the applicabillity
of SMRE for total variation based imaging. All SMREs are computed by an 
alternating direction method of multipliers (ADMM).  For implementation details
and further numerical comparisons see \cite{frick:FriMarMun11}. We note that, in
contrast to the theoretical considerations in Section \ref{gen}, the dictionary
$\Phi$ is usually fixed when computing SMRE for specific applications. Further,
the variance $\sigma^2$ is estimated from the data. Thus, the probability
$\alpha$ remains the only parameter to be chosen in the definition of the SMRE. 

\begin{example}\label{tv:ex_denois}
We first study the case of image denoising. We set $U = V = \R^{n\times n}$ with
$n=256$ equipped with the standard Euclidean inner product and induced norm and
study the model
\begin{equation*}
Y_{ij} = u^\dagger_{ij} + \sigma \eps_{ij},\quad1\leq i,j\leq n,
\end{equation*}
where $\eps = \set{\eps_{ij}}$ is a lattice of independent standard normal
random variables. We choose $u^\dagger$ to be the ``cameraman" image (with
values in $[0, 255]$). In the first column of Figure \ref{tv:denois} the
corresponding noisy images $Y$ are depicted with $\sigma = 30$ (upper row) and
$\sigma = 50$ (lower row).

Let $\mathcal{S}$ be the collection of all discrete squares in
$\set{1,\ldots,256}^2$ up to a maximal side length of $15$. Then, $\mathcal{S}$
consists of $N=930295$ elements and we choose the dictionary $\Phi$ to contain
all the scaled indicator functions $\phi_S = \chi_S\slash n$ for
$S\in \mathcal{S}$. Note that
\begin{equation*}
\norm{\phi_S} = \sqrt{\frac{\# S}{n^2}}\leq 1\quad \text{ and }\quad \phi_S^* =
\frac{\phi_S}{\norm{\phi_S}} = \frac{\chi_S}{\sqrt{\# S}}.
\end{equation*}
 where $\# S$ stands for the number of  grid-points in $S$. We choose the
 function $t_N(s,r)$ as in Section \ref{lutz}  (with $\gamma = d = 2$) such that
 the MR-statistic $T_N$ (Definition \ref{gen:mrstat}) takes the form
\begin{align*}
T_N(v) & = \max_{S\in\mathcal{S}} \abs{\inner{\phi^*_S}{v}} - \sqrt{-4\log
\norm{\phi_S}} \\
 & = \max_{S\in\mathcal{S}}  \left(\frac{1}{\sqrt{\# S}} \left|\sum_{(i,j)\in S}
 v_{ij} \right| - \sqrt{2 \log\left(\frac{n^2}{\# S}\right)}\right).
\end{align*}
Observe that this is the discrete version of the statistic \eqref{lutz:mrstat}.
Summarizing, for $\alpha\in[0,1]$ the SMRE $\hat u_N(\alpha)$ is a solution of
\begin{equation}\label{tv:smreden}
\inf_{u\in \R^{n\times n}} J(u)\quad\text{ s.t. }\quad
T_N(\hat\sigma^{-1}(Y-u))\leq q_N(\alpha)
\end{equation}
where $J$ denotes the discrete total variation functional and $q_N(\alpha)$ the
$1-\alpha$ quantile of $T_N(\eps)$. For the (presumably) unknown $\sigma$ we use
the estimator $\hat\sigma = 1.4826 \text{MAD}$, where $\text{MAD}$ denotes
the mean absolute deviation computed from the data $Y$. The solutions $\hat
u(\alpha)$ are depicted in the middle column of Figure \ref{tv:denois} together with the
normalized residuals $(Y-\hat u(\alpha))\slash \hat\sigma$ (last column).  For
all computations, we choose the quantile $q_N(\alpha) = -2$ in \eqref{tv:smreden}
that corresponds to a value $\alpha$ close to one. For both noise levels, the
residuals reveal hardly any non-random structure which confirms that the
MR-statistic constitutes a well-suited measure for data fidelty. Moreover, the
reconstructions are reasonably smooth while preserving local details. 
   
\begin{figure}[h!]

\includegraphics[height=0.31\textwidth]{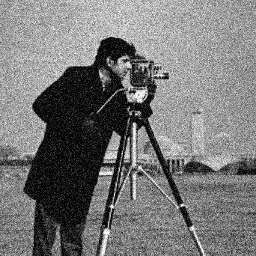}
\includegraphics[height=0.31\textwidth]{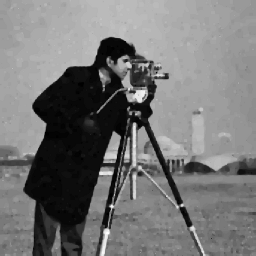}
\includegraphics[height=0.31\textwidth]{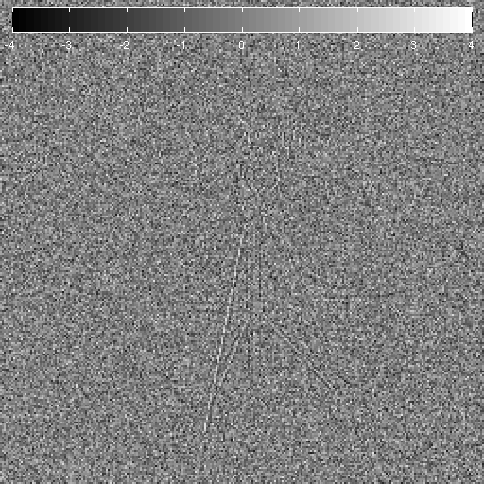}

\vspace{0.005\textwidth}

\includegraphics[height=0.31\textwidth]{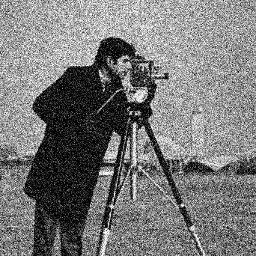}
\includegraphics[height=0.31\textwidth]{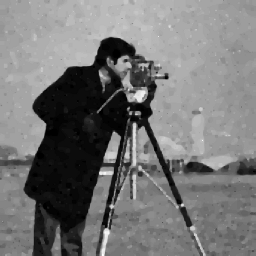}
\includegraphics[height=0.31\textwidth]{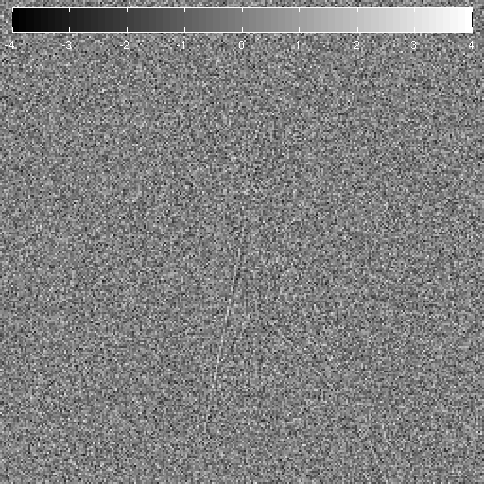}
\caption{First column: Data $Y$ with $\sigma = 30$ (top) and $\sigma = 50$
(bottom). Middle column: SMREs $\hat u (\alpha)$. Last column: Residuals
corresponding to the reconstructions in the middle column.}\label{tv:denois}
\end{figure}

To complement the visual impression, we compare our result with the recently
established locally adaptive image denoising method \cite{frick:Gra09}.  The
method requires a user-defined smoothing parameter $\theta\in[0,1]$ where  we
use the value $\theta = 0.6$ as suggested in \cite{frick:Gra09}. We note,  that
the procedure in \cite{frick:Gra09} does not require any a-priori knowledge on 
the variance $\sigma^2$ (In fact, it also applies to heteroscedastic noise).  
In order to gain a balanced assessment we compute three different types of distance measures
between estimator and the true signal: the signal-to-noise ratio (SNR) and the
integrated absolute error (IAE)
\begin{equation*}
\text{SNR}(u) = 10\log_{10}\left(\frac{\sum_{i,j}\abs{u_{ij}^\dagger - \bar
u^\dagger}^2}{\sum_{i,j}\abs{u^\dagger_{ij}- u_{ij}}^2} \right)\quad\text{
and }\quad \text{IAE}(u) =
\frac{1}{n^2}\sum_{i,j} \abs{u_{ij} - u^\dagger_{ij}},
\end{equation*}
where $\bar u^\dagger$ denotes the mean value of $u^\dagger$. SNR and IAE
basically measure the quality of the reconstruction in terms of the image
intensity. Additionally , we compute the Bregman distance
$D^{\xi}_J(\cdot,u^\dagger)$ (where we use a subgradient $\xi$ as in Example \ref{tv:smoothsubgr}) that measures the mean deviation between the unit normals at the level lines of the
reconstrunction and the true image (cf. Example \ref{tv:subgr}). The Bregman
distance hence measures how well the smoothness of the
reconstruction matches the smoothness of the true image. 

In Table \ref{tv:table} the averaged values of $100$ simulation runs for the
Bregman distance, SNR and IAE are listed for $\sigma = 30$ and $\sigma = 50$. 
\begin{table}[h!]
 \begin{tabular}{|l|p{1.5cm}p{1.5cm}p{1.5cm}|p{1.5cm}p{1.5cm}p{1.5cm}|}
 \hline
  & \multicolumn{3}{c|}{$\sigma = 30$} & \multicolumn{3}{c|}{$\sigma = 50$} \\
  \hline
   & Bregman & SNR & IAE & Bregman & SNR & IAE \\ 
  \hline
  SMRE  & 2.01 & 14.62 & 7.13 & 2.93 & 12.43 & 9.40 \\
  \cite{frick:Gra09}  & 2.30 & 13.52 & 7.57 & 3.57 & 11.96 & 9.62
  \\
  \hline 
 \end{tabular}
 \caption{Simulation results for ``cameraman'' image}\label{tv:table}
 \end{table}
As it can be seen from Table \ref{tv:table}, our approach outperforms the method
in \cite{frick:Gra09} with respect to all three distance measures. We mention
that SNR-values corresponding to other reconstruction methods can be found in
\cite{frick:Gra09} for a further comparison. 

 Finally, we stress that computation of
SMRE is numerically demanding: Whereas the estimators in
\cite{frick:Gra09} can be computed roughly in 10 seconds, the computation of
$\hat u(\alpha)$ takes up to 15 minutes (both Matlab implementations on a
dual-core (2.4GHz) computer). In the latter case, the computation time strongly
depends on the tolerance for numerical solutions of \eqref{tv:smreden} and on
the number of elements in the dictionary $\Phi$. We mention, though, that the
algorithmic methodology used in this example (see \cite{frick:FriMarMun11} for
details) permits efficient parallelisation which is not exploited in the current
implementation.
\end{example}

\begin{example}\label{tv:ex_deconv} 
Finally, we study the performance of the SMRE approach for image deconvolution,
i.e. we consider with $U$ and $V$ as in Example \ref{tv:ex_denois} the model 
\begin{equation*}
Y_{ij} = (Ku^\dagger)_{ij} + \sigma \eps_{ij},\quad1\leq i,j\leq n,
\end{equation*}
where $K$ is a convolution operator inducing \emph{motion blur} and where
$\sigma =13$. In Figure \ref{tv:deconv} the data (left image) and the SMRE
reconstruction $\hat u(\alpha)$ (middle image) are depicted, where $\hat u(\alpha)$ solves
\begin{equation*}
\inf_{u\in \R^{n\times n}} J(u)\quad\text{ s.t. }\quad
T_N(\hat\sigma^{-1}(Y-Ku))\leq q_N(\alpha).
\end{equation*}
The statistic $T_N$, the functional $J$, $q_N(\alpha)$ and $\hat \sigma$ are
chosen as in Example \ref{tv:ex_denois}. 

\begin{figure}[h!]
\includegraphics[height=0.31\textwidth]{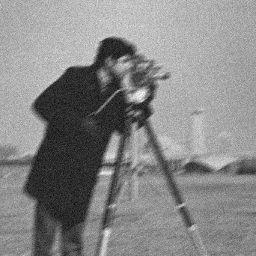}
\includegraphics[height=0.31\textwidth]{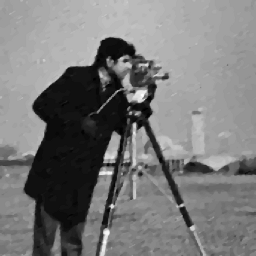}
\includegraphics[height=0.31\textwidth]{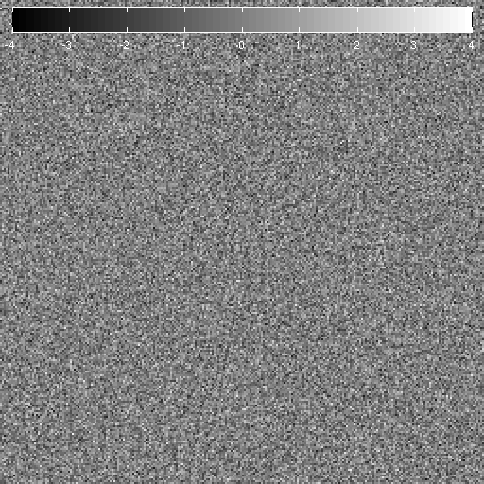}
\caption{Data $Y$ (left), SMRE $\hat u (\alpha)$ (middle) and
corresponding residuals.}\label{tv:deconv}
\end{figure}
 
The right image in Figure \ref{tv:deconv} shows the
standardised residuals. Similar as in the denoising case, the non-random
structures are reduced to a reasonable amount where at the same time the result
$\hat u(\alpha)$ does not seem to be underregularized. This gives numerical
evidence that SMREs are a promising approach for image deconvolution.
\end{example}

\section*{Acknowledgement}

K.F. and A.M are supported by the DFG-SNF Research Group FOR916
\emph{Statistical Regularization and Qualitative Constraints} (Z-Project). P.M
and A.M. are supported by the BMBF project $03$MUPAH$6$ \emph{INVERS} and by the SFB755
\emph{Nanoscale Photonic Imaging}. A.M. is supported by the SFB803
\emph{Functionality Controlled by Organization in and between Membranes}. 
The authors are indepted to L.~D{\"u}mbgen and A.~Tsybakov for stimulating
discussions and would like to thank two anonymous referees for their helpful
comments. 

\appendix

\section{Source-condition and Bregman-divergence: Some examples}\label{app:breg}

The notions of source-condition and Bregman-divergence are very common in the
field of inverse problems. We will summarise the meaning of the
source-condition \eqref{defs:solution:source} and the Bregman-divergence for some
frequently used regularisation functionals $J$. 

\begin{example}\label{defs:norm}
  Let $J(u) = \frac{1}{2}\norm{u}^2$. Then, $J$ is differentiable on
  $U$ and for all $u\in U$ the set $\partial J(u)$ consists of the single
  element $\set{u}$. We have that $J'(v)(w) = \inner{v}{w}$ and consequently
  \begin{equation*}
    D_J(v,u) = D_J^\xi(v,u) = \frac{1}{2}\norm{v-u}^2\quad\text{ for }\xi =
    u\in \partial J(u).
  \end{equation*}
  Moreover, the source condition \eqref{defs:solution:source} can be rewritten to
  \begin{equation*}
    u^\dagger \in \ran(K^*).
  \end{equation*}
  Since $\ran(K^*) = \ran(K^*K)^{1\slash 2}$, this shows that the source
  condition \eqref{defs:solution:source} corresponds to the
  \emph{H\"older-source condition} $u^\dagger\in \ran(K^*K)^\beta$ for $\beta =
  1\slash 2$ \citep[see][]{frick:EHN96}. In
  \cite[Sec. 5.3]{frick:BisHohMunRuy07}, the H{\"o}lder-source condition w.r.t.
  a \emph{smoothing operator $K$} on Hilbert-scales has been discussed. To be
  more precise, assume that $\set{H_\mu}_{\mu \in \R}$ is a scale of Hilbert spaces and that $K$ is $a$-times smoothing, i.e.
  $K:H_{\mu - a} \ra H_\mu$ is continuous with continuous inverse. Then the
  condition $u^\dagger = (K^*K)^\beta p^\dagger$ implies that $u^\dagger \in
  H_{2a\beta}$. A prototype for Hilbert scales are Sobolev spaces. Here the
  index $\mu$ corresponds to the Sobolev index.
\end{example}

\begin{example}\label{defs:sparsity}
  Let $\set{\psi_n}_{n\in\N}$ be a ONB of $U$ and define
  \begin{equation*}
      J(u) = \norm{u}_1 := \sum_{j\in\N} \abs{\inner{u}{\psi_n}}.
  \end{equation*}
  In applications this functional promotes \emph{sparse solutions}, that is
  solutions that have only few non-zero coefficients w.r.t the basis
  $\set{\psi_n}_{n\in\N}$. As it was argued in \cite[Rem.
  17]{frick:GraHalSch08} the source-condition \eqref{defs:solution:source}
  holds if and only if there exist constants $a,b,\gamma > 0$ such that
  $\norm{u^\dagger}_1 <a$ and
  \begin{equation*}
      \bigl\|u\bigr\|_1 - \bigl\| u^\dagger \bigr\|_1 \geq -\gamma \bigl\|K(u -
      u^\dagger)\bigr\|
  \end{equation*}
  for all $u\in U$ such that $\norm{u}_1 < a$ and $\norm{K(u - u^\dagger)}<b$.
  If additionally for every finite set $J\subset \N$ the restriction of $K$ to
  the set $\overline{\set{\psi_n~:~ n\in J}}$ is injective, there exist
  constants $\beta_1,\beta_2 > 0$ such that
  \begin{equation*}
      \norm{u - u^\dagger}_1 \leq \beta_1 D_J^{K^*p^\dagger}(u,u^\dagger) +
      \beta_2 \norm{K(u - u^\dagger)}
  \end{equation*}
  for all $u\in U$  (see the proof of \cite[Thm. 15]{frick:GraHalSch08} and
  \cite[Thm 6.4]{frick:FriLorRes10}).
  \end{example}

\begin{example}\label{defs:h1norm}
  Assume that $U = \L{2}$ for an open and bounded set $\Omega\subset \R^n$
  with Lipschitz boundary $\partial \Omega$ and outer unit-normal $\nu$ and let
  $\H{\beta}$ denote the Sobolev-space of order $\beta\in\R$. We define
  \begin{equation*}
    J(u) = \begin{cases} \int_\Omega \abs{\nabla u}^2 \dx & \text{ if }u \in
\H{1} \\
+\infty & \text{ else.}
\end{cases} 
  \end{equation*}
  Then \citep[see][pp.63]{frick:B76}, the set $D(\partial J)$ consists of all
  elements $u \in \H{2}$ that have vanishing normal derivative $\inner{\nabla
  u}{\nu}$ on $\partial \Omega$ and if  $u\in D(\partial J)$, then
  $\partial J(u) = \set{-\Delta u}$. With this, it follows that $J'(v)(w) =
  \inner{\nabla v}{\nabla w}$ and
  \begin{equation*}
    D_J(v,u) = D_J^\xi(v,u) = \frac{1}{2}\norm{\nabla(v-u)}^2\quad\text{ for
    }\xi = -\Delta u \in \partial J(u). 
  \end{equation*}
  Moreover, $u^\dagger$ satisfies the source
  condition \eqref{defs:solution:source} with source element $p^\dagger\in V$ if
  and only if
  \begin{eqnarray*}
-(K^*p^\dagger)(x) & = & \Delta u^\dagger(x)\quad \text{ in }\Omega \\
\nabla u^\dagger \cdot \nu & = & 0 \quad \mathcal{H}^{n-1}\text{-a.e. on
} \partial \Omega
\end{eqnarray*}
(here $\mathcal{H}^{n-1}$ stands for the $(n-1)$-dimensional
Hausdorff-measure on $\partial \Omega$).
\end{example}

\begin{example}\label{tv:subgr}
Assume that $U$ is as in Example \ref{defs:h1norm} with $\Omega\subset \R^2$ and
let $J$ be the total variation semi-norm as defined in \eqref{tv:tv}. As it was
for example proved in \cite[Thm. 4.4.2]{frick:Fri08}, one has $\xi\in\partial
J(u)$ if and only if there exists
  $z\in\Ls{\infty}(\Omega, \R^2)$ with $\norm{z}_{\Ls{\infty}} \leq 1$ such that
  $\inner{z}{\nu}=0$ on $\partial \Omega$,
  \begin{equation*}
    \diw{z} = \xi\quad\text{ and }\quad \int_\Omega \xi u\dx = \abs{\D
    u}(\Omega).
  \end{equation*}
If $\xi \in \partial J(u)$, it thus follows that $D_J^\xi(v,u) = \abs{\D
v}(\Omega) -\int_\Omega \xi v \dx$. One can show that
\begin{equation*}
  D_J^\xi(v,u) = \int_\Omega (1- \cos(\gamma(v,u,x))) \diff \abs{\D v}(x)
\end{equation*}
where $\gamma(v,u,x)$ denotes the angle between the unit normals of the
sub-levelsets of $u$ and $v$ at the point $x\in\Omega$. 
\end{example}

 
\section{Proofs}\label{app}  
\subsection{Proofs of the main results}\label{app:main}

In this section the proofs of the main results, that is  existence, consistency
and convergence rates for SMRE, are collected.  We start with a basic estimate
for the quantile function $q_N(\cdot)$ of the  MR-statistic as
defined in \eqref{gen:quantile}. We shall assume that
Assumptions \ref{defs:basicass} and \ref{gen:assK} hold.

\begin{lemma}\label{gen:quant}
Assume that $T_N$ is an MR-statistic and let $\alpha \in (0,1)$ and $N\in
\N$. Then,
\begin{equation*}
  q_N(\alpha) \leq \text{med}(T_N(\eps)) + L
  \sqrt{-2\log(2\alpha)}.
\end{equation*}
\end{lemma}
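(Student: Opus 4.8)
The statement bounds the $(1-\alpha)$-quantile $q_N(\alpha)$ of $T_N(\eps)$ in terms of its median and the Lipschitz constant $L$ from Definition~\ref{gen:mrstat}(i). The natural tool is the Gaussian concentration inequality for Lipschitz functionals of a white-noise (Gaussian) process. The plan is to verify that $v \mapsto T_N(v)$, as a function of the finite-dimensional Gaussian vector $(\eps(\phi_1^*),\ldots,\eps(\phi_N^*))$, is Lipschitz with constant at most $L$, and then apply the concentration-around-the-median form of the Gaussian isoperimetric inequality.

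First I would fix $N$ and observe that $T_N(\eps)$ depends only on the random vector $G = (\eps(\phi_1^*),\ldots,\eps(\phi_N^*)) \in \R^N$. Each component $\eps(\phi_n^*)$ is $\mathcal{N}(0,1)$ since $\norm{\phi_n^*}=1$, so $G$ is a (correlated) standard-type Gaussian vector; more conveniently, write $G = M\eta$ where $\eta$ is a standard Gaussian vector in some $\R^m$ and $M$ has rows of Euclidean norm $\le 1$ (the covariance $\Cov{\eps(\phi_i^*)}{\eps(\phi_j^*)} = \inner{\phi_i^*}{\phi_j^*}$ is a Gram matrix). Define $F(x) = \max_{1\le n\le N} t_N(\abs{x_n}, \norm{\phi_n})$ for $x\in\R^N$. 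Since each $s\mapsto t_N(s,\norm{\phi_n})$ is Lipschitz with constant $\le L$ and $s\mapsto \abs{s}$ is $1$-Lipschitz, each map $x\mapsto t_N(\abs{x_n},\norm{\phi_n})$ is $L$-Lipschitz with respect to the Euclidean norm on $\R^N$, and a maximum of $L$-Lipschitz functions is again $L$-Lipschitz. Composing with $\eta\mapsto M\eta$ (which has operator norm equal to the largest singular value of $M$; because the rows of $M$ have norm $\le 1$ one checks $\norm{M}_{\mathrm{op}}$ can exceed $1$ in general — here one should instead argue directly that $x\mapsto F(Mx)$ is $L$-Lipschitz by noting $\abs{(Mx)_n} = \abs{\inner{x}{m_n}} \le \norm{x}$, so the composed function is still $L$-Lipschitz). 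Hence $T_N(\eps) = F(G) = (F\circ M)(\eta)$ is an $L$-Lipschitz function of a standard Gaussian vector.

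Next I would invoke the Gaussian concentration inequality around the median: if $h:\R^m\to\R$ is $1$-Lipschitz and $\eta$ is standard Gaussian, then $\Prob(h(\eta) \ge \med(h(\eta)) + r) \le \tfrac12 e^{-r^2/2}$ for all $r\ge 0$. Applying this to $h = (F\circ M)/L$ gives
\begin{equation*}
  \Prob\bigl(T_N(\eps) \ge \med(T_N(\eps)) + L r\bigr) \le \tfrac12 e^{-r^2/2}.
\end{equation*}
Setting $Lr = L\sqrt{-2\log(2\alpha)}$, i.e. $r = \sqrt{-2\log(2\alpha)}$, makes the right-hand side equal to $\tfrac12 \cdot 2\alpha = \alpha$ (valid since $\alpha\in(0,1)$ forces $2\alpha$; one needs $\alpha < 1/2$ for $r$ to be real, which is the regime of interest — for $\alpha\ge 1/2$ the bound is trivial because $q_N(\alpha)\le \med(T_N(\eps))$). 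Therefore $\Prob(T_N(\eps) \ge \med(T_N(\eps)) + L\sqrt{-2\log(2\alpha)}) \le \alpha$, which by the definition \eqref{gen:quantile} of the quantile as an infimum gives $q_N(\alpha) \le \med(T_N(\eps)) + L\sqrt{-2\log(2\alpha)}$.

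The main obstacle is the careful bookkeeping of the Lipschitz constant through the linear map induced by the (non-orthonormal) dictionary: one must resist the temptation to bound $\norm{M}_{\mathrm{op}}$ by $1$ and instead use that each coordinate functional $x\mapsto \inner{x}{m_n}$ has norm $\norm{m_n} = \norm{\phi_n^*} = 1$, so that only a single coordinate "direction" enters the maximum at any point and the Lipschitz constant of $F\circ M$ stays $L$ rather than $L\norm{M}_{\mathrm{op}}$. A secondary, more technical point is justifying the use of the finite-dimensional Gaussian concentration inequality for the abstract white-noise process $\eps$; this is immediate once one passes to the finite-dimensional marginal $(\eps(\phi_1^*),\ldots,\eps(\phi_N^*))$, which is all $T_N$ sees.
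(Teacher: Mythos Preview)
Your proposal is correct and follows essentially the same route as the paper: express $T_N(\eps)$ as an $L$-Lipschitz function of a standard Gaussian vector and apply Borel's Gaussian concentration inequality around the median. The paper writes $(\eps(\phi_1^*),\ldots,\eps(\phi_N^*)) = \Sigma^{1/2} Z$ with $Z$ standard normal and asserts $\norm{\Sigma}_2 = 1$; that operator-norm claim is not true for a general Gram matrix of unit vectors, but the needed conclusion---that $f\circ\Sigma^{1/2}$ is $L$-Lipschitz---holds precisely because each row of $\Sigma^{1/2}$ has Euclidean norm $\sqrt{\Sigma_{nn}}=1$, which is exactly the argument you spell out (only one coordinate direction enters the maximum at a time). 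So your handling of this point is in fact cleaner than the paper's, and your remark on the trivial regime $\alpha\ge 1/2$ is also a detail the paper omits.
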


\begin{proof}
  First, we introduce the function $f(x_1,\ldots,x_N) =
  \max_{1\leq n\leq N} t_N(x_n, \norm{\phi_n})$. Then, $f$ is Lip\-schitz
  continuous with $\norm{f}_{\text{Lip}} \leq L$. Next, define for $1\leq n\leq N$ the random variables $\eps_n:=
  \eps(\phi_n^*)$. Then, $(\eps_1,\ldots,\eps_N) \sim \mathcal{N}(0,\Sigma)$ for
  a symmetric and positive matrix $\Sigma \in \R^{N\times N}$ with
  $\norm{\Sigma}_2 = 1$. Hence
  \begin{equation*}
    T_N(\eps) = \max_{1\leq n\leq N} t_N(\eps(\phi_n^*), \norm{\phi_n}) =
    f(\eps_1,\ldots,\eps_N) = f(\Sigma^{1\slash2} Z),
  \end{equation*}
  where $Z$ is an $N$-dimensional random vector with independent standard normal
  components. In other words, the statistic $T_N(\eps)$ can be written as the
  image of $Z$ under the Lipschitz function $f(\Sigma^{1\slash 2} \cdot)$.
  Applying Borel's inequality \citep[see][Lem. A.2.2]{frick:VaaWel96} we find
  that  $2\Prob\left(T_N(\eps) - \med(T_N(\eps)) > L \eta \right) \leq
   \exp\left( -(\eta^2\slash 2)\right)$ for all $\eta \in \R$. Now let 
  $\alpha\in (0,1)$, choose $q<q_N(\alpha)$ and set $\eta = (q -
  \med(T_N(\eps)))\slash L$. Then, $\Prob(T_N(\eps) \leq q) < 1-\alpha$ and
  hence
  \begin{equation*}
    \alpha  = 1 - (1-\alpha) < 1 - \Prob\left( T_N(\eps) < q \right)
     = \Prob\left( T_N(\eps) \geq  q \right) \leq \frac{1}{2}
    \exp\left(-\frac{1}{2} \left( \frac{q - \med(T_N(\eps))}{L}\right)^2\right).
  \end{equation*}
  Rearranging the above inequality yields 
  \begin{equation*}
  q < \med(T_N(\eps)) + L\sqrt{-2\log(2\alpha)},\quad\text{ for all }q<
  q_N(\alpha).
  \end{equation*}
  The assertion follows for $q\ra q_N(\alpha)$.
\end{proof}

We proceed with the proof of the existence result in Theorem
\ref{gen:smrexist}. To this end we use a standard compactness argument from
convex optimisation. For the sake of completeness, however, we will present the proof.

\begin{proof}[Proof of Theorem \ref{gen:smrexist}]
  Let $N \geq N_0$ and $y\in V$ be arbitrary. Due tu Assumption
  \ref{defs:basicass} (ii), $D(J)\subset U$ is dense and hence there exists for
  all given $\delta > 0$ an element $u_0 \in D(J)$ such that $\norm{Ku_0 - \tilde y} \leq\delta$, where $\tilde y$
  denotes the orthonormal projection of $y$ onto $\overline{\ran(K)}$. Since
  $\phi_n \in \overline{\ran(K)}$ and $\norm{\phi_n^*} = 1$ for all $n\in\N$,
  this implies that $\abs{\inner{Ku_0 - y}{\phi_n^*}} =  \abs{\inner{Ku_0 -
  \tilde y}{\phi_n^*}} \leq \delta$ for all $n\in\N$.
  
  Now let $\sigma > 0$ and
  $\alpha \in (0,1)$. Since $T_N$ is an MR-statistic (cf. Definition
  \ref{gen:mrstat}) we find that $t_N(0,r) < 0$ for all $r\in (0,1]$. Thus,
  according to according to the reasoning above, there exists $u_0 \in D(J)$
  such that for $1\leq n\leq N$
  \begin{equation}\label{aux1}
    L \sigma^{-1}\abs{y_n - \inner{Ku_0}{\phi_n^*}} \leq
    q_N(\alpha) - \max_{1\leq n\leq N} \lambda_N(\norm{\phi_n}),
  \end{equation}
  if the right-hand side is positive. To see this, assume that $q_N(\alpha) \leq
  \max_{1\leq n\leq N} \lambda_N(\norm{\phi_n})$. Since  for $1\leq n\leq N$ we
  have that  $t_N(\abs{\eps(\phi_n^*)}, \norm{\phi_n}) \geq
  \lambda_N(\norm{\phi_n})$ almost surely according to \eqref{gen:ineq}, it then
  follows that
  \begin{equation*}
    \Prob\left( T_N(\eps) \geq q_N(\alpha) \right) \geq \Prob\left(T_N(\eps)
    \geq  \max_{1\leq n\leq N} \lambda_N(\norm{\phi_n}) \right) = 1.
  \end{equation*}
  This is a contradiction to the definition of $q_N(\alpha)$ in
  \eqref{gen:quantile} and thus $u_0 \in D(J)$ as in \eqref{aux1} can be
  chosen. Since $s\mapsto t_N(s,r)$ is
  Lipschitz-continuous with constant $L$ and increasing for all $r\in (0,1]$,
  we find $t_N(\sigma^{-1} \abs{y_n - \inner{Ku_0}{\phi_n^*}}, \norm{\phi_n})
  \leq t_N(0,\norm{\phi_n}) + L \sigma^{-1} \abs{y_n - \inner{Ku_0}{\phi_n^*}}
    \leq q_N(\alpha)$ for $1\leq n\leq N$. 
  In other words, there exists at least one element $u_0 \in D(J)$ such that
  \begin{equation*}
    u_0 \in S := \set{u \in U~:~ \max_{1\leq n\leq N} t_N(\sigma^{-1}\abs{y_n
    - \inner{Ku}{\phi_n^*}}, \norm{\phi_n}) \leq q_N(\alpha) }.
  \end{equation*}
  Now, choose a sequence
  $\set{u_k}_{k\in\N} \subset S$ such that $J(u_k) \ra \inf_{u\in S} J(u)$. 
  This shows that $\sup_{k\in\N} J(u_k) =: a < \infty$. Moreover, we find from
  \eqref{gen:ineq}, that there exist constants $c_1, c_2 >0$
  such that for all $1\leq n \leq N$
  \begin{multline*}
    c_1 \sigma^{-1} \abs{y_n - \inner{K u_k}{\phi_n^*}} + c_2
    t_N(\abs{y_n - \inner{K u_k}{\phi_n^*}},\norm{\phi_n})  \\ \leq
    t_N(\sigma^{-1}\abs{y_n - \inner{K u_k}{\phi_n^*}},\norm{\phi_n}) \leq
    q_N(\alpha).
  \end{multline*}
  Together with \eqref{gen:lowbnd}, this shows $c_1 \sigma^{-1} \abs{ y_n -
  \inner{K u_k}{\phi_n^*}} + c_2 \lambda_N(\norm{\phi_n}) \leq q_N(\alpha)$. 
  Rearranging the inequality above yields
  \begin{equation*}
    \max_{1\leq n \leq N} \abs{\inner{Ku_k}{\phi_n^*}} \leq
    \max_{1\leq n\leq N} \abs{y_n} + \frac{\sigma}{c_1}
    \left( q_N(\alpha) - c_2\inf_{1\leq n\leq N} \lambda_N(\norm{\phi_n})
    \right)=: b < \infty.
  \end{equation*}
  Summarising, we find that $u_k \in \Lambda(a+b)$ for all $k\in\N$, as a
  consequence of which we can drop a weakly convergent  sub-sequence (indexed by
  $\rho(k)$ say) with weak limit $\hat u$. Since we assumed that $t_N(\cdot, r)$
  is convex for all $r\in (0,1]$, it follows that the admissible region $S$ is
  convex and closed and therefore weakly closed. This shows that $\hat u \in
  S$.   Moreover, the weak lower semi-continuity of $J$  (cf. Assumption
  \ref{defs:basicass} (ii)) implies
  \begin{equation*}
    J(\hat u) \leq \liminf_{k\ra\infty} J(u_{\rho(k)}) = \inf_{u\in S} J(u)
  \end{equation*}
  and the assertion follows with $\hat u_N(\alpha) = \hat u$
\end{proof}

In order to prove Bregman-consistency of
SMR-estimation in Theorem \ref{gen:consist}, we first establish a
basic estimate for the data error.

\begin{lemma}\label{gen:imgest}
Let $N\geq N_0$ and $\alpha\in (0,1)$. Moreover, assume that $u^\dagger$  is a
solution  of \eqref{intro:lineqn} and that $\hat u_N(\alpha)$ is an SMRE. Then,
for $1\leq n\leq N$
\begin{equation*}
  c_1 \sigma^{-1} \abs{\inner{Ku^\dagger - K\hat
  u_N(\alpha)}{\phi_n^*}} \leq  T_N(\eps) - 2 c_2
  \lambda_N(\norm{\phi_n}) +  \med(T_N(\eps)) +
  L \sqrt{-2\log(2\alpha)}.
\end{equation*}
\end{lemma}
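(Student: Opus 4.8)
The plan is to exploit that an SMRE necessarily lies in the admissible region of \eqref{intro:opt}, and to apply the structural inequality \eqref{gen:ineq} twice: once to the residual constraint and once to the pure noise term. First I would use that $\hat u_N(\alpha)$ solves \eqref{intro:opt}, so $T_N(\sigma^{-1}(Y - K\hat u_N(\alpha))) \leq q_N(\alpha)$; in particular, for every $1\leq n\leq N$,
\[
  t_N\left(\sigma^{-1}\abs{\inner{Y - K\hat u_N(\alpha)}{\phi_n^*}}, \norm{\phi_n}\right) \leq q_N(\alpha).
\]
Applying \eqref{gen:ineq} with $s = \sigma^{-1}\abs{\inner{Y - K\hat u_N(\alpha)}{\phi_n^*}}$ and then bounding $t_N(\sigma s, \norm{\phi_n}) \geq \lambda_N(\norm{\phi_n})$ via \eqref{gen:lowbnd} yields
\[
  c_1 \sigma^{-1}\abs{\inner{Y - K\hat u_N(\alpha)}{\phi_n^*}} \leq q_N(\alpha) - c_2 \lambda_N(\norm{\phi_n}).
\]

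Next I would pass from the data residual to the image error. Since $Y = Ku^\dagger + \sigma\eps$, one has $\sigma^{-1}(Y - K\hat u_N(\alpha)) = \sigma^{-1}(Ku^\dagger - K\hat u_N(\alpha)) + \eps$, so the triangle inequality (applied after pairing with $\phi_n^*$) gives
\[
  c_1 \sigma^{-1}\abs{\inner{Ku^\dagger - K\hat u_N(\alpha)}{\phi_n^*}} \leq c_1 \sigma^{-1}\abs{\inner{Y - K\hat u_N(\alpha)}{\phi_n^*}} + c_1 \abs{\eps(\phi_n^*)}.
\]
It then remains to dominate the noise contribution $c_1\abs{\eps(\phi_n^*)}$ by the full statistic $T_N(\eps)$. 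For this I would invoke \eqref{gen:ineq} a second time, now with $s = \abs{\eps(\phi_n^*)}$, together with \eqref{gen:lowbnd}, to get $c_1\abs{\eps(\phi_n^*)} \leq t_N(\abs{\eps(\phi_n^*)}, \norm{\phi_n}) - c_2 \lambda_N(\norm{\phi_n}) \leq T_N(\eps) - c_2\lambda_N(\norm{\phi_n})$ by definition of $T_N(\eps)$ as a maximum.

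Combining the three displays gives
\[
  c_1 \sigma^{-1}\abs{\inner{Ku^\dagger - K\hat u_N(\alpha)}{\phi_n^*}} \leq q_N(\alpha) + T_N(\eps) - 2c_2 \lambda_N(\norm{\phi_n}),
\]
and the proof finishes by inserting the quantile estimate $q_N(\alpha) \leq \med(T_N(\eps)) + L\sqrt{-2\log(2\alpha)}$ from Lemma \ref{gen:quant}. There is no genuine obstacle here; the only point requiring care is to use \eqref{gen:ineq} in its two distinct roles — peeling off the factor $c_1\sigma^{-1}$ from the residual constraint, and bounding the noise term by $T_N(\eps)$ — and to keep track of the two resulting copies of $c_2\lambda_N(\norm{\phi_n})$, which combine into the factor $2c_2$ appearing in the statement.
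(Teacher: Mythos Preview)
Your argument is correct and reaches exactly the same intermediate inequality
\[
  c_1 \sigma^{-1}\abs{\inner{Ku^\dagger - K\hat u_N(\alpha)}{\phi_n^*}} \leq q_N(\alpha) + T_N(\eps) - 2c_2 \lambda_N(\norm{\phi_n})
\]
as the paper, after which both proofs finish identically via Lemma~\ref{gen:quant}. The route, however, is genuinely different. The paper first uses the triangle inequality \emph{inside} the argument of $t_N$, then exploits convexity and monotonicity of $s\mapsto t_N(s,r)$ to obtain
\[
  t_N\!\left((2\sigma)^{-1}\abs{\inner{Ku^\dagger - K\hat u_N(\alpha)}{\phi_n^*}},\norm{\phi_n}\right)\le \tfrac12\bigl(q_N(\alpha)+T_N(\eps)\bigr),
\]
and only then applies \eqref{gen:ineq} once, followed by a single use of \eqref{gen:lowbnd}. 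You instead apply \eqref{gen:ineq} separately to the residual term and to the noise term, invoke \eqref{gen:lowbnd} twice, and use the triangle inequality at the level of the inner products rather than inside $t_N$. Your version is slightly more economical in that it never calls on the convexity or monotonicity of $t_N(\cdot,r)$ directly---only on the black-box inequality \eqref{gen:ineq} and the lower bound \eqref{gen:lowbnd}---whereas the paper's version needs only one invocation of \eqref{gen:ineq} at the price of the extra convexity step. Both are clean; yours is arguably the more elementary of the two.
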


\begin{proof}
  From Definition \ref{gen:smrest} it follows that $t_N( \sigma^{-1}
  \abs{\inner{ K u^\dagger - K\hat u_N(\alpha) + \sigma \eps}{\phi_n^*}}, 
  \norm{\phi_n}) \leq q_N(\alpha)$ for $1\leq n\leq N$. The convexity of $t_N$
  hence implies that
  \begin{multline*}
    t_N((2\sigma)^{-1} \abs{\inner{Ku^\dagger - K\hat u_N(\alpha)}{\phi_n^*}},
    \norm{\phi_n})  \\
    \leq \frac{1}{2}\left(t_N(\sigma^{-1}
    \abs{\inner{Y -  K\hat u_N(\alpha)}{\phi_n^*}}, \norm{\phi_n}) +
    t_N(\abs{\eps(\phi_n^*)}, \norm{\phi_n}) \right) \leq \frac{1}{2}
    ( q_N(\alpha) + T_N(\eps)).
  \end{multline*}
  By setting $v = (2\sigma)^{-1} \abs{\inner{Ku^\dagger - K\hat
  u_N(\alpha)}{\phi_n^*}}$ and $r = \norm{\phi_n}$ in \eqref{gen:ineq}, the
  above estimate shows that
  \begin{equation*}
    \frac{c_1}{2\sigma} \abs{\inner{Ku^\dagger - K\hat
    u_N(\alpha)}{\phi_n^*}} + c_2 t_N\left(\frac{1}{2}\abs{\inner{Ku^\dagger -
    K\hat u_N(\alpha)}{\phi_n}},\norm{\phi_n^*}\right) \leq \frac{q_N(\alpha) +
    T_N(\eps)}{2}.
  \end{equation*}
  Since $t_N(v,r) \geq \lambda_N(r)$ for all $v\in \R^+$ and $r\in (0,1]$
  (cf. \eqref{gen:lowbnd}) this implies for $1 \leq n\leq N$
  \begin{equation*}
  c_1\sigma^{-1} \abs{\inner{Ku^\dagger -
  K\hat u_N(\alpha)}{\phi_n^*}} \leq q_N(\alpha) + T_N(\eps) - 2c_2
    \lambda_N(\norm{\phi_n}).
  \end{equation*}  
  Finally, the assertion follows from Lemma \ref{gen:quant}.
\end{proof}

With these preparations, we are now able to prove Bregman-consistency.

\begin{proof}[Proof of Theorem \ref{gen:consist}]
  By the definition of the SMRE $\hat u_k =
  \hat u_{N_k} (\alpha_k)$, it follows that
  \begin{equation*}
    \Prob \left( J(\hat u_k) > J(u^\dagger) \right) \leq \Prob
    \left( T_{N_k}(\sigma_k^{-1}(Y - Ku^\dagger)) > q_{N_k}(\alpha_k)
    \right)  = \Prob \left( T_{N_k}(\eps) > q_{N_k}(\alpha_k) \right)
    \leq \alpha_k
  \end{equation*}
  for all $k\in \N$. Since $\sum_{k=1}^\infty \alpha_k <\infty$,
  it follows from the Borel-Cantelli Lemma \citep[see][p 255]{frick:Shi96} that
  $\Prob\left(J(\hat u_k) > J(u^\dagger)  \text{ i.o.} \right) \leq \Prob \left(
  T_{N_k}(\eps) > q_{N_k}(\alpha_k)\text{ i.o.} \right) = 0$, or in other words
  \begin{equation}\label{gen:Jleq}
    \Prob\left( \exists k_0 \in \N:~J(\hat u_k) \leq J(u^\dagger) \text{ for
    all } k\geq k_0 \right) = 1.
  \end{equation}
  In particular, it follows that $\sup_{k\in\N} J(\hat u_k) =: a < \infty$ a.s.
  
  Next, we note that $\sup_{N\in\N} T_N(\eps) < \infty$ a.s. implies that
  $\sup_{N\in\N} \med(T_N(\eps)) < \infty$. Hence, it follows from Lemma
  \ref{gen:imgest} and \eqref{gen:paramchoice} that $\max_{1\leq n\leq N_k}
  \abs{\inner{Ku^\dagger - K\hat
    u_k}{\phi_n^*}} = \bigo(\zeta_k)$ almost surely.
  as $k\ra\infty$ which proves \eqref{gen:imgconseqn}. In particular,
  \eqref{gen:imgconseqn} and the fact hat $N_k > N_0$ imply $\sup_{k\in\N}
  \max_{1\leq n\leq N_0} \abs{\inner{K\hat u_k}{\phi_n^*}} =: b < \infty$  a.s.
  Summarising, we find that $\hat u_k \in \Lambda(a+b)$ which is sequentially
  weakly precompact according to Assumption \ref{gen:assK} (ii). Choose a
  sub-sequence indexed by $\rho(k)$ with weak limit $\hat u \in U$. Since
  $N_k\ra\infty$ as $k\ra\infty$ it follows from \eqref{gen:imgconseqn} and
  \eqref{gen:paramchoice} that
  \begin{equation*}
    \abs{\inner{g - K\hat u}{\phi_n^*}} =  \lim_{k\ra\infty}
    \abs{\inner{Ku^\dagger  - K \hat u_{\rho(k)}}{\phi_n^*}} = 0\quad \text{
    for all  }n\in\N.
  \end{equation*}
  Since we assumed that $g\in \overline{\spa \Phi }$ this shows that $K\hat u =
  g$. Furthermore, according to \eqref{gen:Jleq} there exists (almost surely) an
  index $k_0$ such that $J(\hat u_{\rho(k)})$ does not exceed $J(u^\dagger)$ for
  all $k \geq k_0$. Together with the weak lower semi-continuity of $J$ this
  shows $J(\hat u)\leq \liminf_{k\ra\infty} J(\hat u_{\rho(k)}) \leq
    \limsup_{k\ra\infty} J(\hat u_{\rho(k)}) \leq J(u^\dagger)$.  Since
    $u^\dagger$ is a $J$-minimising solution of \eqref{intro:lineqn} we conclude
    that the same holds for $\hat u$ and that  $J(\hat u) = J(u^\dagger)=
    \lim_{k\ra\infty} J(\hat u_{\rho(k)}$.  In particular, for each sub-sequence
    $\set{J(u_k)}_{k\in\N}$ there exists a further sub-sequence that converges
    to $J(u^\dagger)$. This already shows that $\lim_{k\ra\infty} J(\hat u_k) =
    J(u^\dagger)$ a.s.
  
  We next prove that $D_J(u^\dagger, \hat u_k) \ra 0$. To this end, recall that
  there almost surely exists an index $k_0$ such that for $k\geq k_0$ one has
  $T_{N_k}(\eps) \leq q_{N_k}(\alpha_k)$. In order to exploit strong duality
  arguments, however, we have to make sure that the interior of the admissible
  region is non-empty (Slater's constraint qualification). But since we assumed
  that $s\mapsto t_N(s,r)$  is (strictly) increasing for each fixed $r\in (0,1]$
  it follows that $\Prob\left(t_{N_k}(\abs{\eps(\phi_n^*)}, \norm{
    \phi_n^*})=q_{N_k}(\alpha_k)\right) = 0$  for all $n\in\N$ and thus
  \begin{equation}\label{gen:consintnonempty}
    \Prob \left( \exists k_0:~ T_{N_k}(\eps) < q_{N_k}(\alpha_k)\text
    { for all }k\geq k_0 \right) = 1.
  \end{equation}
  By introducing the functional
  \begin{equation*}
    G_k(v) = \begin{cases}
0 & \text{ if } T_{N_k}(\sigma_k^{-1}(Y - v)) \leq
q_{N_k}(\alpha_k)  \\
+\infty & \text{ else,}
\end{cases}
  \end{equation*}
  we can rewrite \eqref{intro:opt} into $\hat u_k \in \argmin_{u\in U} J(u)
  + G_k(Ku)$. From \eqref{gen:consintnonempty} it follows that $u^\dagger$ lies
  in the interior of the admissible set of the convex problem \eqref{intro:opt}. In
  other words, the functionals $G_k$ are continuous at $Ku^\dagger$ for $k$
  large enough. Therefore we can apply \cite[Chap. II Prop. 4.1]{frick:ET76}
  (cf. also Chapter II, Remark 4.2 therein) and choose an element $\xi_k \in V$
  such that $K^*\xi_k \in \partial J(\hat u_k)$ and $-\xi_k
  \in \partial G_k(K \hat u_k)$. The second inclusion and the definition of the
  sub-gradient show that $G_k(Ku) \geq G_k(\hat u_k) - \inner{\xi_k}{Ku - K\hat
    u_k} = \inner{K^*\xi_k}{\hat u_k - u}$ for all
  $u\in U$. In particular, $u^\dagger$ satisfies $T_{N_k}(\sigma_k^{-1}(Y -
  Ku^\dagger)) = T_{N_k}(\eps) < q_{N_k}(\alpha_k)$ and thus $G_k(Ku^\dagger) =
  0$. This shows $0 \geq \inner{K^*\xi_k}{\hat u_k - u^\dagger}$. Since $J(\hat
  u_k)\ra J(u^\dagger)$ we find
  \begin{multline*}
    0\leq \limsup_{k\ra\infty} D_J(u^\dagger,\hat u_k) \leq
    \limsup_{k\ra\infty} D_J^{K^*\xi_k}(u^\dagger,\hat u_k)  \\
    =
    \limsup_{k\ra\infty}  J(u^\dagger) - J(\hat u_k) -
    \inner{K^*\xi}{u^\dagger - \hat u_k} \leq \limsup_{k\ra\infty}
    J(u^\dagger) - J(\hat u_k) = 0.
  \end{multline*}
  This proves \eqref{gen:conseqn}.
\end{proof}

It remains to prove the convergence rate results in Theorem \ref{gen:thmrates}.
To this end additional regularity of the true $J$-minimising solutions
$u^\dagger$ of \eqref{intro:lineqn} has to be taken into account. This
is formulated in Assumption \ref{gen:assSE}. With this we get the following
basic estimate.

\begin{lemma}\label{gen:imgrates}
Assume that Assumption \ref{gen:assSE} holds and let $N\geq N_0$ and $\alpha\in
(0,1)$. Then,
\begin{multline*}
  \abs{\inner{K^*p^\dagger}{\hat u_N(\alpha) - u^\dagger}} \leq
  \frac{\sigma}{c_1} \left( \tilde T_N(\eps) -
  2c_2 \inf_{1\leq n\leq N} \lambda_N(\norm{\phi_n}) + L \sqrt{-2\log(2\alpha)}
  \right)\sum_{n=1}^N \abs{b_{n,N}}  \\ + \rho_N \norm{K\hat u_N(\alpha) -
  K u^\dagger},
\end{multline*}
where $\tilde T_N(\eps) = T_N(\eps) +  \med(T_N(\eps))$.
\end{lemma}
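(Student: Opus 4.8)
The plan is to decompose $K^*p^\dagger$ using the approximating expansion furnished by Assumption \ref{gen:assSE}, and then control the two resulting pieces separately: the tail $p^\dagger - \sum_{n=1}^N b_{n,N}\phi_n^*$ by the Cauchy--Schwarz inequality, and the finite linear combination $\sum_{n=1}^N b_{n,N}\phi_n^*$ by the coordinatewise data-error bound already established in Lemma \ref{gen:imgest}. Throughout I read $\rho_N = \err_N(p^\dagger)$.

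First I would move the adjoint across and insert $\pm\sum_{n=1}^N b_{n,N}\phi_n^*$, writing
\begin{equation*}
  \inner{K^*p^\dagger}{\hat u_N(\alpha) - u^\dagger}
  = \inner{p^\dagger - \sum_{n=1}^N b_{n,N}\phi_n^*}{K\hat u_N(\alpha) - Ku^\dagger}
  + \sum_{n=1}^N b_{n,N}\inner{\phi_n^*}{K\hat u_N(\alpha) - Ku^\dagger}.
\end{equation*}
By Cauchy--Schwarz and the definition of $\err_N$ in \eqref{gen:seapprox}, the first inner product is bounded in modulus by $\err_N(p^\dagger)\,\norm{K\hat u_N(\alpha) - Ku^\dagger} = \rho_N\,\norm{K\hat u_N(\alpha) - Ku^\dagger}$, which is the second term on the right-hand side of the asserted estimate. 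The second sum is bounded, by the triangle inequality, by $\bigl(\max_{1\leq n\leq N}\abs{\inner{\phi_n^*}{K\hat u_N(\alpha) - Ku^\dagger}}\bigr)\sum_{n=1}^N\abs{b_{n,N}}$.

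It then remains to estimate $\max_{1\leq n\leq N}\abs{\inner{\phi_n^*}{K\hat u_N(\alpha) - Ku^\dagger}}$. Here I would invoke Lemma \ref{gen:imgest}, which gives for each $1\leq n\leq N$ the bound $c_1\sigma^{-1}\abs{\inner{Ku^\dagger - K\hat u_N(\alpha)}{\phi_n^*}} \leq T_N(\eps) - 2c_2\lambda_N(\norm{\phi_n}) + \med(T_N(\eps)) + L\sqrt{-2\log(2\alpha)}$. Since $c_2 > 0$ and $\inf_{1\leq m\leq N}\lambda_N(\norm{\phi_m}) \leq \lambda_N(\norm{\phi_n})$ for each $n$ (so that $-2c_2\lambda_N(\norm{\phi_n}) \leq -2c_2\inf_{1\leq m\leq N}\lambda_N(\norm{\phi_m})$), one may replace $\lambda_N(\norm{\phi_n})$ by the infimum over $n$ and then take the maximum over $n$ to obtain $\max_{1\leq n\leq N}\abs{\inner{\phi_n^*}{K\hat u_N(\alpha) - Ku^\dagger}} \leq \frac{\sigma}{c_1}\bigl(\tilde T_N(\eps) - 2c_2\inf_{1\leq n\leq N}\lambda_N(\norm{\phi_n}) + L\sqrt{-2\log(2\alpha)}\bigr)$ with $\tilde T_N(\eps) = T_N(\eps) + \med(T_N(\eps))$. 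Substituting this into the decomposition above yields the claim. There is no serious obstacle in this argument; the only points demanding a little care are the sign bookkeeping in the step above (the $\lambda_N$-values are negative, so passing to the infimum genuinely enlarges the bound) and arranging the split so that the approximation error of $p^\dagger$ multiplies the image-space quantity $\norm{K\hat u_N(\alpha) - Ku^\dagger}$ rather than an uncontrolled domain-space norm.
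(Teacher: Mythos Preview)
Your proposal is correct and follows essentially the same route as the paper: move $K^*$ to the other side, split $p^\dagger$ into the finite combination $\sum_{n=1}^N b_{n,N}\phi_n^*$ and its remainder, control the remainder by Cauchy--Schwarz to produce the $\rho_N\norm{K\hat u_N(\alpha)-Ku^\dagger}$ term, bound the finite part by $\bigl(\sum_n|b_{n,N}|\bigr)\max_n|\inner{\phi_n^*}{K\hat u_N(\alpha)-Ku^\dagger}|$, and then invoke Lemma~\ref{gen:imgest}. Your explicit remark about the sign when passing from $\lambda_N(\norm{\phi_n})$ to the infimum is a nice clarification the paper leaves implicit.
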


\begin{proof}
  From Assumption \ref{gen:assSE} we find that
  \begin{equation*}
    \begin{split}
      \abs{\inner{K^*p^\dagger}{\hat u_N(\alpha) - u^\dagger}} & =
      \abs{\inner{p^\dagger}{K\hat u_N(\alpha) - Ku^\dagger}} \\
      & \leq \abs{\inner{\sum_{n=1}^N b_{n,N} \phi_n^*}{K\hat
      u_N(\alpha) - Ku^\dagger}} + \rho_N \norm{K\hat
      u_N(\alpha) - Ku^\dagger} \\
      & \leq \sum_{n=1}^N \abs{b_{n,N}} \max_{1\leq n \leq N}
      \abs{\inner{  \phi_n^*}{K\hat u_N(\alpha) - Ku^\dagger}}+ \rho_N \norm{K\hat
      u_N(\alpha) - Ku^\dagger}.
    \end{split}
  \end{equation*}
  From Lemma \ref{gen:imgest} it follows that
  \begin{equation*}
    \max_{1\leq n \leq N} \abs{\inner{  \phi_n^*}{K\hat u_N(\alpha) -
    Ku^\dagger}} \leq \frac{\sigma}{c_1}\left( \tilde T_N(\eps) -
    2c_2 \inf_{1\leq n\leq N} \lambda_N(\norm{\phi_n}) + L \sqrt{-2\log(2\alpha)}
    \right)
  \end{equation*}
  which shows the assertion.
\end{proof}

Combination of the auxiliary result in Lemma \ref{gen:imgrates} with Theorem
\ref{gen:consist} paves the way to the proof of Theorem \ref{gen:thmrates}. 

\begin{proof}[Proof of Theorem \ref{gen:thmrates}]
  First, observe that Assumption \ref{gen:assSE} and
  the definition of $\eta_k$ imply \eqref{gen:paramchoice}, that is, all
  assumptions in Theorem \ref{gen:consist} are satisfied. Therefore
  $\set{\hat u_k}_{k\in\N}$ is bounded almost surely and due to the continuity
  of $K$ we find that $\sup_{k\in\N} \norm{K\hat u_k - Ku^\dagger}  <
  \infty$ a.s. After setting $B:= \sup_{N\in\N} \sum_{n=1}^N \abs{b_{n,N}}$,
  which is finite according to Assumption \ref{gen:assSE}, it follows from Lemma
  \ref{gen:imgrates} and the definition of $\eta_k$ that
  \begin{equation}\label{gen:aux} 
    \abs{\inner{K^*p^\dagger}{\hat u_k - u^\dagger}} \leq
    \frac{B \sigma_k}{c_1}  \tilde T_{N_k}(\eps) + C \eta_k 
  \end{equation}
  for a suitably chosen constant $C>0$. Since $\sup_{N\in\N} T_N(\eps)<\infty$
  almost surely, it follows that also $\sup_{N\in\N} \tilde T_N(\eps) =
  \sup_{N\in\N}\left( T_N(\eps) + \med(T_N(\eps))\right) < \infty$ a.s.
  Combining this with \eqref{gen:aux} shows
  \begin{equation*}
    \abs{\inner{K^*p^\dagger}{\hat u_k - u^\dagger}} =
    \bigo (\eta_k)\quad\text{ a.s.}
  \end{equation*}
  Next, recall from \eqref{gen:Jleq} in the proof of Theorem \ref{gen:consist}
  that almost surely an index $k_0$ can be chosen such that for all $k\geq k_0$
  one has $J(\hat u_k) \leq J(u^\dagger)$. This shows that
  \begin{equation*}
    D_J^{K^*p^\dagger}(\hat u_k, u^\dagger) = J(\hat u_k) - J(u^\dagger) -
    \inner{K^*p^\dagger}{\hat u_k - u^\dagger}
    \leq \abs{\inner{K^*p^\dagger}{\hat u_k - u^\dagger}}
    = \bigo(\eta_k)
  \end{equation*}
  for $k\geq k_0$. This proves the first estimate in \eqref{gen:rateeqn}. The
  second estimate follows directly from Lemma \ref{gen:imgest}.
\end{proof}

\subsection{Approximation of continuous functions and entropy
estimates}\label{technical}

In this section we collect some results on the approximation
properties and entropy estimates for systems of piecewise constant functions
defined on a convex and compact set $\Omega\subset \R^d$ ($d\geq 1$). We start
with the following basic

\begin{definition}\label{app:hoelder}
Let $\Omega\subset \R^d$ be compact and convex.
\begin{enumerate}[(i)]
  \item For a function $g:\Omega\ra \R$, the \emph{modulus of continuity} is defined
  by
  \begin{equation*}
    \omega(\delta, g) = \sup_{\substack{s,t \in \Omega \\ \abs{s-t}_2\leq
    \delta}} \abs{g(s) - g(t)}\quad\text{ for }\delta > 0.
  \end{equation*}
  \item A function $g:\Omega\ra \R$ is called \emph{H\"older-continuous with
  exponent $\beta\in(0,1]$} if $\omega(\delta, g) = \bigo(\delta^\beta)$.
  The collection of all functions on $\Omega$ that are H\"older-continuous with
  exponent $\beta$ is denoted by $\mathcal{H}_\beta(\Omega)$.
\end{enumerate}
\end{definition}

The following lemma provides an error estimate for the  approximation of
a continuous $g:\Omega\subset \R^d\ra\R$ by piecewise constant functions in terms
of the modulus of continuity.

\begin{lemma}\label{reg:approx}
Let $\Omega\subset \R^d$ be a compact and convex set and $\set{A_1, A_2,\ldots}$
be a collection of measurable sub-sets of $\Omega$. Assume that there exists an
increasing sequence $\set{n_l}_{l\in\N}\subset \N$ with $n_0 = 0$ such that
\begin{enumerate}[(i)]
  \item for all $n_{l}+1\leq i< j \leq n_{l+1}$ one has $\abs{A_i \cap A_j}
  = 0$, 
  \item and $\Omega = A_{n_l + 1} \cup \ldots \cup A_{n_{l+1}}$
\end{enumerate}
for all $l\in\N$.
Then, for all continuous $g:\Omega\ra \R$ there exist coefficients $b_{j,l}^m$
such that
\begin{equation*}
  \sup_{m\in\N} \sum_{l=0}^{m} \sum_{j = n_l + 1}^{n_{l+1}} \abs{b^m_{j,l}}
  \leq \norm{g}_\infty\quad\text{ and }\quad \norm{g - \sum_{l=0}^{m} \sum_{j =
  n_l + 1}^{n_{l+1}} b^m_{j,l} \chi_{A_j}}^2 \leq \frac{m+1}{\sum_{l=0}^m
  \omega^{-2}(\delta_l, g)},
\end{equation*}
where $\delta_l := \max_{n_l < j \leq n_{l+1}}
\text{diam}(A_j)$.
\end{lemma}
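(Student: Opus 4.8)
The plan is to approximate $g$ separately on each ``resolution level'' $l=0,1,\dots$ by the piecewise-constant function built from the cell means, and then to take an inverse-variance weighted average of these level-$l$ approximants over $l=0,\dots,m$. The weights will be tuned precisely so that the $\Ls{2}$-error of the weighted average collapses into the harmonic-mean expression on the right-hand side; the coefficients $b^m_{j,l}$ are then just the coefficients of this weighted average in the system $\set{\chi_{A_j}}$.

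In detail, for each $l$ I would set $g_l:=\sum_{j=n_l+1}^{n_{l+1}}c_{j,l}\,\chi_{A_j}$, where $c_{j,l}$ is the mean value of $g$ on $A_j$ (with $c_{j,l}:=0$ if $\abs{A_j}=0$). By (i)--(ii) the cells $\set{A_j}_{n_l<j\le n_{l+1}}$ partition $\Omega$ up to a null set, so these terms have disjoint supports, $\abs{c_{j,l}}\le\norm{g}_\infty$ for all $j$, and, since $g$ is continuous with $\text{diam}(A_j)\le\delta_l$, one has $\abs{g-c_{j,l}}\le\omega(\delta_l,g)$ a.e.\ on $A_j$; summing the squared errors over the cells gives $\norm{g-g_l}^2\le\abs{\Omega}\,\omega(\delta_l,g)^2$. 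If $\omega(\delta_{l},g)=0$ for some $l\le m$, then $g$ is constant on the convex (hence connected) set $\Omega$, the right-hand side vanishes and $g$ is represented exactly by its value on the whole-domain cell, so from now on assume $\omega(\delta_l,g)>0$ for all $l\le m$. Put $w_l:=\omega(\delta_l,g)^{-2}$, $W_m:=\sum_{l=0}^m w_l$, $\lambda_l:=w_l/W_m$, and define $b^m_{j,l}:=\lambda_l c_{j,l}$, so that $\sum_{l=0}^m\sum_{j=n_l+1}^{n_{l+1}}b^m_{j,l}\chi_{A_j}=\sum_{l=0}^m\lambda_l g_l=:g^{(m)}$. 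Since $\sum_{l=0}^m\lambda_l=1$ and $t\mapsto\norm{t}^2$ is convex,
\[
  \norm{g-g^{(m)}}^2=\norm{\sum_{l=0}^m\lambda_l(g-g_l)}^2\le\sum_{l=0}^m\lambda_l\norm{g-g_l}^2\le\abs{\Omega}\sum_{l=0}^m\lambda_l\,\omega(\delta_l,g)^2=\abs{\Omega}\,\frac{m+1}{W_m},
\]
where the last equality uses $w_l\,\omega(\delta_l,g)^2=1$; for $\abs{\Omega}\le1$ (as in the applications, where $\Omega=[0,1]^d$) this is the stated $\Ls{2}$-bound. Note that no nesting of the partitions is needed for this part.

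The step I expect to be the crux is the bound on the total coefficient mass. With the construction above, $\sum_{l=0}^m\sum_{j=n_l+1}^{n_{l+1}}\abs{b^m_{j,l}}=\sum_{l=0}^m\lambda_l\sum_{j=n_l+1}^{n_{l+1}}\abs{c_{j,l}}$; although each individual cell mean satisfies $\abs{c_{j,l}}\le\norm{g}_\infty$, the inner sum runs over all cells of the $l$-th partition, so a crude estimate of $\sum_j\abs{c_{j,l}}$ grows with the cell count, and the point is to exhibit the interplay between the choice of representatives $c_{j,l}$ and the inverse-variance weights $\lambda_l$ that keeps this double sum at or below $\norm{g}_\infty$ uniformly in $m$. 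This coefficient estimate, rather than the $\Ls{2}$-estimate, is where the real work lies; in the applications only its finiteness is ultimately used — via the normalisation $\phi_n^*=\chi_{A_j}/\norm{\chi_{A_j}}$ it supplies the second condition in Assumption \ref{gen:assSE}, while $\text{diam}(A_j)\ra0$ drives the harmonic-mean bound to $0$.
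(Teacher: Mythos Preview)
Your construction is exactly the paper's: level-$l$ cell-mean approximants $g_l$, inverse-variance weights $\lambda_l=\omega(\delta_l,g)^{-2}/W_m$, and $b^m_{j,l}=\lambda_l c_{j,l}$. Your $\Ls{2}$-estimate via convexity/Jensen is the paper's argument as well (both carry a tacit factor $\abs{\Omega}$, harmless since $\Omega=[0,1]^d$ in the applications).

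You are right to flag the coefficient bound as the crux, and right to be uneasy about it. The paper does \emph{not} supply the ``interplay'' you are looking for; its entire argument for the first inequality is the single step ``$\abs{b^m_{j,l}}\le\norm{g}_\infty a_{lm}$, hence $\sum_{l=0}^{m}\sum_{j=n_l+1}^{n_{l+1}}\abs{b^m_{j,l}}\le\norm{g}_\infty$'', which silently drops the factor $n_{l+1}-n_l$ coming from the inner sum over $j$. In fact the bound as stated cannot hold for \emph{any} choice of coefficients once the coarsest level already has more than one cell. Take $\Omega=[0,1]$, $g(x)=1+\varepsilon x$ with $0<\varepsilon<1$, and let level $l=0$ consist of $N\ge 2$ equal subintervals. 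For $m=0$ the right-hand side equals $\omega(\delta_0,g)^2=(\varepsilon/N)^2$, so any admissible $h=\sum_{j=1}^N b_j\chi_{A_j}$ must satisfy $\norm{g-h}_{\Ls{2}}\le\varepsilon/N$. But if additionally $\sum_j\abs{b_j}\le\norm{g}_\infty=1+\varepsilon$, then $\norm{h}_{\Ls{1}}=\tfrac1N\sum_j\abs{b_j}\le(1+\varepsilon)/N$, whence
\[
\norm{g-h}_{\Ls{2}}\ \ge\ \norm{g-h}_{\Ls{1}}\ \ge\ \norm{g}_{\Ls{1}}-\norm{h}_{\Ls{1}}\ \ge\ 1+\tfrac{\varepsilon}{2}-\tfrac{1+\varepsilon}{2}\ =\ \tfrac12,
\]
contradicting $\norm{g-h}_{\Ls{2}}\le\varepsilon/N$. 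So the gap you identified is genuine: with this construction the double sum is of order $\sum_l\lambda_l(n_{l+1}-n_l)\norm{g}_\infty$, not $\norm{g}_\infty$, and no alternative choice of $b^m_{j,l}$ can rescue the stated inequality in general. Your $\Ls{2}$-half of the proof is complete and matches the paper; the coefficient half is where the paper's argument, not your understanding, is deficient.
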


\begin{proof}
  Let $g:\Omega\ra \R$ be continuous. For $l\in \N$ we define
  \begin{equation*}
    g_l = \sum_{j = n_l + 1}^{n_{l+1}} \abs{A_j}^{-1} \int_{A_j}
    g(\tau)\diff\tau\cdot \chi_{I_j}.
  \end{equation*}
  Next, we introduce $a_{lm} =
  (\omega^{-2}(\delta_l, g))\slash(\sum_{\nu=0}^m \omega^{-2}(\delta_\nu, g))$
  for $m\in\N$ and $1\leq l\leq m$. Note, that $a_{lm} \in (0,1)$ and
  $\sum_{0\leq l\leq m} a_{lm} = 1$. With this, we define for $0\leq l \leq m$ and $n_l < j \leq n_{l+1}$ the coefficients $b^m_{j,l} = (a_{lm}
  \int _{A_j} g(\tau) \diff\tau)\slash \abs{A_j}$. Since we
  assumed that $g$ is continuous on the compact set $\Omega$, it follows that
  $\abs{b^m_{j,l}} \leq \norm{g}_\infty a_{lm}$ and hence $\sum_{l=0}^{m}
  \sum_{j = n_l + 1}^{n_{l+1}} \abs{b^m_{j,l}} \leq \norm{g}_\infty$  for all
  $m\in\N$.
  Moreover, we have for all $s\in \Omega$ that
  \begin{equation*}
    \abs{\sum_{l=0}^m a_{lm} g_l(s) - g(s)} \leq  \sum_{l=0}^m a_{lm}\left(
    \sum_{j = n_l + 1}^{n_{l+1}} \frac{1}{\abs{I_j}}  \int_{A_j}
    \abs{g(\tau) - g(s)} \diff\tau \cdot \chi_{A_j}(s) \right).
  \end{equation*}
  After applying Jensen's inequality and keeping in mind that $\abs{s-t}
  \leq \delta_l$ for  $s,t \in A_j$ and $n_l < j \leq n_{l+1}$ it follows
  that
  \begin{equation*}
    \begin{split}
      \int_\Omega \abs{\sum_{l=0}^m a_{lm} g_l(s) - g(s)}^2 \diff s &
      \leq
      \sum_{l=0}^m a_{lm} \int_\Omega \left( \sum_{j = n_l + 1}^{n_{l+1}}
      \frac{1}{\abs{A_j}} \int_{A_j} \abs{g(\tau) - g(s)}^2
      \diff\tau \cdot \chi_{A_j}(s) \right)\diff s \\ &  =
      \sum_{l=0}^m a_{lm}
      \sum_{j = n_l + 1}^{n_{l+1}} \int_{A_j} \frac{1}{\abs{A_j}}
      \int_{A_j} \abs{g(\tau) - g(s)}^2 \diff \tau \diff s \\
      & \leq  \sum_{l=0}^m a_{lm} \omega^2(\delta_l, g)  \sum_{j = n_l +
      1}^{n_{l+1}} \abs{A_j}.
    \end{split}
  \end{equation*}
  Assumptions (i) and (ii) together with the definition of the coefficients
  $a_{lm}$ eventually yield
  \begin{equation*}
    \int_\Omega \abs{\sum_{l=0}^m a_{lm} g_l(s) - g(s)}^2 \diff s
    \leq \frac{m+1}{\sum_{\nu=0}^m \omega^{-2}(\delta_\nu, g)}.
  \end{equation*}
\end{proof}

For the remainder of this section we collect some results concerning the
capacity number of (subsystems of) the set $\Phi_d$ of indicator functions on
convex and closed sets in $[0,1]^d$ with $d\geq 1$. We first recall the basic
definition

\begin{definition}\label{lutz:capcovdef} Let $(T,d)$ be a semi-metric space, $T'
\subset T$ and $\eps > 0$. The \emph{capacity number} is defined by
\begin{equation*}
  D(\eps, T'):= \sup_{T''\subset T'}\left(\set{\# T''~:~ d(a,b)\geq
  \eps\text{ for all } a\not = b \in T'}\right).
\end{equation*}
\end{definition}

From a practical point of view, it is often more convenient to express
\eqref{mrnorm:caprequ} in terms of the \emph{$\eps$-covering number} $N(\eps,
T')$ of $T'$ which is defined as the smallest number of $\eps$-balls in $T$
needed to cover $T'$ (the center points need not to be elements of $T'$,
though).  It is common knowledge \citep[see][p.98]{frick:VaaWel96} that for all $\eps > 0$
\begin{equation}\label{mrnorm:capcov}
  N(\eps, T) \leq D(\eps, T) \leq N(\eps\slash 2, T).
\end{equation}
%

We consider $\Phi_d\subset \Ls{2}([0,1]^d)$ as a metric space with the induced
$\Ls{2}$-metric, i.e. for $\chi_P,\chi_Q \in \Phi_d$ we have
\begin{equation*}
  d(\chi_Q, \chi_P)^2 = \norm{\chi_P - \chi_Q}^2 = \int_{[0,1]^d} (\chi_Q -
  \chi_P)^2 \diff \dx= \abs{Q \triangle P}.
\end{equation*}
The entire set $\Phi_d$ is too large in order to render the test-statistic $T_N$
in \eqref{lutz:statistic} finite: It was shown in \cite{frick:Bro76} 
\citep[see also][Chap. 8.4]{frick:dud99}) that the $\eps$-covering number of
$\Phi_d$ of all nonempty, closed and convex sets contained in the unit ball $\set{x\in \R^d~:~
\abs{x} \leq 1}$ is of the same order as $\exp(\eps^{(1-d)\slash 2})$ (for $d\geq
2$)  as $\eps\ra 0^+$. This proves that there cannot exist any constants $A$, $B$
and $\gamma$ such that \eqref{mrnorm:caprequ} holds with $\Phi = \Phi_d$.

For particular classes of convex sets, however, entropy estimates
as in \eqref{mrnorm:caprequ} are at hand. The collection $\Phi_{r}$ of
indicator functions on $d$-dimensional rectangles in $[0,1]^d$ constitutes
such an example:

\begin{proposition}\label{app:entrrect}
There exists a constant $A = A(d) > 0$ such that
\begin{equation*}
  D(u\delta, \set{\phi\in\Phi_r~:~ \norm{\phi}\leq \delta}) \leq A
  (u\delta)^{-4d}
\end{equation*}
for all $u,\delta\in(0,1]$.
\end{proposition}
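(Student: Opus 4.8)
The plan is to reduce the covering/capacity estimate for indicator functions of small rectangles to a purely combinatorial counting of rectangles whose corner coordinates lie on a fine grid, exploiting the fact that the $\Ls{2}$-metric $d(\chi_P,\chi_Q)^2=\abs{P\triangle Q}$ on $\Phi_r$ is controlled by the $\ell^1$-displacement of the $2d$ defining corner coordinates. First I would fix $\delta\in(0,1]$ and note that a rectangle $R\subset[0,1]^d$ with $\norm{\chi_R}\le\delta$ satisfies $\abs{R}\le\delta^2$; write $R=\prod_{i=1}^d[a_i,b_i]$. The key elementary estimate is that if two such rectangles $R,R'$ (with small volume) have all corresponding corner coordinates within $\eta$ of each other, then $\abs{R\triangle R'}\lesssim \eta\cdot(\text{something involving the }(d-1)\text{-dimensional face areas})$. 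Because the volume is at most $\delta^2$, each face has $(d-1)$-dimensional area at most $\delta^2/(\text{the remaining side length})$, but a cleaner route is: $\abs{R\triangle R'}\le \sum_{i}\bigl|b_i-b_i'\bigr|\cdot\prod_{j\ne i}(\text{side}_j)+\ldots$, and one can bound each product of $d-1$ sides crudely by the product of the larger of the two side lengths in each coordinate. Keeping the bookkeeping honest is the first technical task, but it is routine.

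Next I would discretise. Put a grid on $[0,1]$ of spacing $h$ (to be chosen as a small power of $u\delta$) and round every corner coordinate $a_i,b_i$ of a rectangle in our class to the nearest grid point, obtaining a ``snapped'' rectangle $\widetilde R$. By the symmetric-difference estimate of the previous step, $\norm{\chi_R-\chi_{\widetilde R}}^2\lesssim d\,h$ (the factor from the $d$ coordinate directions, the $(d-1)$-fold side-length products being bounded by $1$). Hence choosing $h\sim (u\delta)^2/d$ makes $\norm{\chi_R-\chi_{\widetilde R}}\le u\delta/2$, say. The snapped rectangles form a cover of $\set{\phi\in\Phi_r:\norm{\phi}\le\delta}$ by balls of radius $u\delta/2$, so $N(u\delta/2,\cdot)$ is at most the number of snapped rectangles, and by \eqref{mrnorm:capcov} the capacity number $D(u\delta,\cdot)$ is bounded by the same count. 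The number of grid points in $[0,1]$ is $\le 1/h+1\lesssim 1/h$, and a rectangle is determined by $2d$ such coordinates, giving at most $(C/h)^{2d}\le A'\,(u\delta)^{-4d}$ snapped rectangles for a constant $A'=A'(d)$. This yields exactly the claimed bound $D(u\delta,\{\phi\in\Phi_r:\norm{\phi}\le\delta\})\le A(u\delta)^{-4d}$.

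The main obstacle, such as it is, is the symmetric-difference lemma: one has to verify that perturbing the corners of a small-volume axis-parallel rectangle by $h$ changes its indicator function by at most $O(\sqrt{dh})$ in $\Ls{2}$, uniformly over all rectangles in the class, and in particular that the thin, elongated rectangles (which are exactly the ones $\Phi_r$ contains that $\Phi_d$-type convex-body entropy bounds cannot handle) do not cause trouble. They do not, because $\abs{R\triangle R'}$ is a sum of $d$ ``slab'' contributions, each of the form (displacement in one coordinate) $\times$ ($(d-1)$-dimensional measure of a face), and each such face measure is trivially bounded by $1$ since the face sits inside $[0,1]^{d-1}$; so the small-volume hypothesis is not even needed for the metric estimate, only the location of the rectangles inside the unit cube. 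Everything else is elementary counting, and one should be a little careful only to absorb all $d$-dependent constants into $A(d)$ and to double-check that the exponent $4d$ (rather than $2d$) genuinely arises from the factor-$h\sim(u\delta)^2$ choice forced by passing from the $\Ls{2}$-norm to squared volume.
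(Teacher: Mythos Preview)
Your argument is correct and complete. The symmetric-difference estimate via telescoping over the $d$ coordinate directions gives $\abs{R\triangle R'}\le 2dh$ whenever all $2d$ corner coordinates are perturbed by at most $h$, and the counting of grid rectangles then yields precisely the exponent $-4d$ after the choice $h\sim (u\delta)^2/d$. Your closing observation is also right: the constraint $\norm{\phi}\le\delta$ plays no role, so you are in fact bounding the capacity of all of $\Phi_r$.

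The paper, however, takes a different and much shorter route: it simply invokes the VC bound for covering numbers of VC-subgraph classes \citep[Thm.~2.6.7]{frick:VaaWel96}, which gives $N(\eps,\Phi_r)\le A\eps^{-2(V-1)}$ with $V$ the VC-index of the collection of axis-parallel rectangles in $[0,1]^d$; since $V=2d+1$, this is $A\eps^{-4d}$, and the claim follows from \eqref{mrnorm:capcov}. So the paper trades your explicit grid construction for a one-line citation. Your approach has the advantage of being entirely elementary and self-contained, with an explicit constant $A(d)$, and it makes transparent why the exponent is $4d$ (namely $2d$ parameters times a factor $2$ from the squared $\Ls{2}$-metric). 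The paper's approach, on the other hand, immediately generalises to any VC-class of sets without redoing the geometry, at the cost of importing the VC machinery.
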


\begin{proof}
  From \cite[Thm. 2.6.7]{frick:VaaWel96} it follows that the $\eps$-covering
  number of $\Phi_r$ can be estimated by $A \eps^{-2(V-1)}$ where $V$ denotes
  the VC-index of the set of subgraphs
  $\set{(x,t)~:~ t < \phi(x)}$ for $\phi\in \Phi_r$. This in turn is equal to
  the VC-index of the collections of all rectangles in $[0,1]^d$ which is
  $2d+1$ \citep[see][Ex. 2.6.1]{frick:VaaWel96}.
\end{proof}

For certain subsets of $\Phi_r$ better estimates can be derived. We close this
section with results for the system $\Phi_s$ and $\Phi_2$ of indicator functions
on all squares and dyadic partitions in $[0,1]^d$ respectively. We skip the
proofs, for they are elementary but rather tedious.

\begin{proposition}\label{mrnorm:allsetass}
There exists a constant $A = A(d) > 0$ such that
\begin{equation*}
  D(u\delta, \set{\phi\in\Phi_s~:~ \norm{\phi}\leq \delta}) \leq A u^{-2(d+1)}
  \delta^{-d},\quad \text{
  for all }u,\delta \in(0,1].
\end{equation*}
\end{proposition}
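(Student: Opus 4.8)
The plan is to reduce the entropy estimate for $\set{\phi\in\Phi_s:\norm{\phi}\le\delta}$ — a subset of $\Ls2([0,1]^d)$ carrying the metric $\norm{\chi_Q-\chi_P}_{\Ls2}=\abs{Q\triangle P}^{1/2}$ — to an elementary covering count on a low-dimensional box. Each $\phi\in\Phi_s$ is the indicator $\chi_{Q(x,a)}$ of an axis-parallel cube $Q(x,a)=\prod_{i=1}^d[x_i,x_i+a]$, hence is parametrised by its corner $x\in[0,1]^d$ and side length $a\in(0,1]$ (subject to $x+a\eye\in[0,1]^d$), and $\norm{\chi_{Q(x,a)}}=\abs{Q(x,a)}^{1/2}=a^{d/2}$. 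Thus $\norm{\phi}\le\delta$ is equivalent to $a\le\delta^{2/d}$, so all admissible parameters lie in the box $B_\delta:=[0,1]^d\times(0,\delta^{2/d}]$.

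The core of the argument is a Lipschitz-type bound for $(x,a)\mapsto\chi_{Q(x,a)}$ from $(B_\delta,\norm{\cdot}_\infty)$ into $\Ls2$, which I would establish in two pieces and then patch with the triangle inequality. First, for a pure translation, writing $c_i:=(a-\abs{x_i-y_i})_+\in[0,a]$ and telescoping the product against $c_i\le a$ (using $a-c_i=\min(\abs{x_i-y_i},a)\le\norm{x-y}_\infty$),
\begin{equation*}
  \abs{Q(x,a)\triangle Q(y,a)}=2\Bigl(a^d-\prod_{i=1}^d c_i\Bigr)\le 2a^{d-1}\sum_{i=1}^d(a-c_i)\le 2d\,a^{d-1}\,\norm{x-y}_\infty .
\end{equation*}
Second, for a change of side length at a fixed corner, $Q(x,a)\subseteq Q(x,b)$ when $a\le b$, so $\abs{Q(x,a)\triangle Q(x,b)}=b^d-a^d\le d\max(a,b)^{d-1}\abs{a-b}$. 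Since $a,b\le\delta^{2/d}$ forces $a^{d-1},\max(a,b)^{d-1}\le\delta^{2(d-1)/d}$, combining the two bounds via the triangle inequality for $\norm{\cdot}_{\Ls2}$ and $\sqrt s+\sqrt t\le\sqrt2\sqrt{s+t}$ yields
\begin{equation*}
  \norm{\chi_{Q(x,a)}-\chi_{Q(y,b)}}_{\Ls2}\le C_d\,\delta^{(d-1)/d}\bigl(\norm{x-y}_\infty+\abs{a-b}\bigr)^{1/2},\qquad C_d:=2\sqrt d .
\end{equation*}

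From here everything is mechanical. Put $r:=u^2\delta^{2/d}/(4C_d^2)$; the last display shows that $\norm{x-y}_\infty\le r$ and $\abs{a-b}\le r$ together force $\norm{\chi_{Q(x,a)}-\chi_{Q(y,b)}}_{\Ls2}\le u\delta/\sqrt2<u\delta$. Hence, covering $B_\delta$ by a grid of side-$r$ cells (in all $d+1$ coordinates), a $(u\delta)$-separated subset of $\set{\phi\in\Phi_s:\norm{\phi}\le\delta}$ has at most one representative per cell. Since $0<r\le1$ and $u,\delta\in(0,1]$ one has $\lceil1/r\rceil\le2/r$ and $\lceil\delta^{2/d}/r\rceil\le2\delta^{2/d}/r$, so the number of cells — which by definition bounds $D(u\delta,\set{\phi\in\Phi_s:\norm{\phi}\le\delta})$ (Definition \ref{lutz:capcovdef}) — is at most
\begin{equation*}
  \Bigl(\frac{2}{r}\Bigr)^{d}\cdot\frac{2\delta^{2/d}}{r}=(8C_d^2)^{d+1}\,u^{-2(d+1)}\,\delta^{-2}\le A(d)\,u^{-2(d+1)}\,\delta^{-d},
\end{equation*}
the final step using $\delta\le1$ and $d\ge2$; this is the asserted estimate with $B=2(d+1)$.

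The steps needing the most care are the two volume estimates, in particular the translation bound when $\abs{x_i-y_i}>a$ (the cubes are then disjoint in coordinate $i$): there $c_i=0$ and $a-c_i=a<\abs{x_i-y_i}\le\norm{x-y}_\infty$, so $\sum_i(a-c_i)\le d\norm{x-y}_\infty$ still holds without a case split. One should also observe that all these $\Ls2$-distances may be computed for cubes extended to $\R^d$, since restricting to $[0,1]^d$ only decreases them. Everything else is bookkeeping of the powers of $\delta$ generated by $a\le\delta^{2/d}$, which is presumably why the authors skipped the computation.
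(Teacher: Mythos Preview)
The paper explicitly omits the proof of this proposition, stating only that it is ``elementary but rather tedious''. Your argument is exactly the kind of direct parameter-space covering one would expect here: parametrise cubes by corner and side length, establish a H\"older-$\tfrac12$ bound for $(x,a)\mapsto\chi_{Q(x,a)}$ from the sup-metric on parameters into $\Ls{2}$, and count grid cells. All the volume estimates are correct, including the care you take with the case $\abs{x_i-y_i}>a$ and with the intermediate cube $Q(y,a)$ possibly protruding from $[0,1]^d$.

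Your computation in fact yields the sharper exponent $\delta^{-2}$ rather than the stated $\delta^{-d}$, and you correctly note that the passage to $\delta^{-d}$ requires $d\ge2$. This is not a defect of your proof but of the statement: a packing by pairwise disjoint cubes of volume $\delta^2$ (there are $\sim\delta^{-2}$ of them, at mutual $\Ls{2}$-distance $\sqrt{2}\,\delta>u\delta$) shows $D(u\delta,\cdot)\gtrsim\delta^{-2}$, so for $d=1$ the bound $A u^{-4}\delta^{-1}$ cannot hold. The tacit hypothesis $d\ge2$ is consistent with the explicit restriction in the companion Proposition~\ref{mrnorm:dyadicsetass} and with the imaging applications the paper has in mind.
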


\begin{proposition}\label{mrnorm:dyadicsetass}
Let $d\geq 2$ and consider the system of all
\emph{dyadic partitions} in $[0,1]^d$, that is
\begin{equation*}
  \mathcal{P}_{2} := \set{Q\subset [0,1]^d~:~ Q = 2^{-k}(i+[0,1]^d),\ k\in\N,
  i = (i_1,\ldots,i_d)\in\N^d }.
\end{equation*}
Let $\Phi_2$ the set of all indicator functions on elements in
$\mathcal{P}_2$. Then, there exists a constant $A = A(d) > 0$ such that
\begin{equation*}
  A^{-1} u^{-2} \delta^{-2} \leq D(u\delta, \set{\phi \in
  \Phi_2~:~ \norm{\phi} \leq \delta}) \leq A u^{-2} \delta^{-2},\quad \text{
  for all }u,\delta \in(0,1].
\end{equation*}
\end{proposition}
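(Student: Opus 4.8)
The plan is to estimate the capacity number $D(u\delta,\Phi)$, where I abbreviate $\Phi:=\set{\chi_Q : Q\in\mathcal{P}_2,\ \norm{\chi_Q}\leq\delta}$; thus $\chi_Q\in\Phi$ exactly when $Q$ is a dyadic cube with $\abs{Q}\leq\delta^2$, and for any two dyadic cubes $d(\chi_Q,\chi_P)^2=\abs{Q\triangle P}$, which equals $\abs{Q}+\abs{P}$ when $Q,P$ are essentially disjoint and $\abs{\,\abs{Q}-\abs{P}\,}$ when one of them contains the other --- these being the only two configurations possible for dyadic cubes. This dichotomy is what makes the set essentially one-dimensional and accounts for the exponent $2$ (rather than a $d$-dependent one).

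For the \emph{upper bound} I would pass to covering numbers via $D(u\delta,\Phi)\leq N(u\delta/2,\Phi)$ (cf.\ \eqref{mrnorm:capcov}) and write down an explicit net. Set $c:=2^{-1-d/2}u\delta$ and let $\mathcal{N}$ be the set of all $\chi_Q$ with $c^2\leq\abs{Q}\leq\delta^2$. Since there are $2^{kd}$ dyadic cubes of measure $2^{-kd}$ and consecutive admissible measures differ by the factor $2^d\geq 2$, summing a geometric series gives $\#\mathcal{N}\leq 2c^{-2}=2^{d+3}u^{-2}\delta^{-2}$. To see that $\mathcal{N}$ is a $(u\delta/2)$-net: if $\abs{Q}\geq c^2$ then $\chi_Q\in\mathcal{N}$; otherwise let $Q'$ be the smallest dyadic cube with $Q\subseteq Q'$ and $\abs{Q'}\geq c^2$, so that $c^2\leq\abs{Q'}<2^d c^2=u^2\delta^2/4\leq\delta^2$, hence $\chi_{Q'}\in\mathcal{N}$ and $d(\chi_Q,\chi_{Q'})^2=\abs{Q'}-\abs{Q}\leq\abs{Q'}<(u\delta/2)^2$. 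This yields the upper bound with $A=A(d)=2^{d+3}$. Note that this half of the argument is robust: every $\delta$ works, because one is free to round cubes \emph{up} the tree.

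For the \emph{lower bound} the natural idea is to pack a single dyadic scale. There is a unique level $\ell$ with $2^{-\ell d}\in[\,u^2\delta^2/2,\ 2^{d-1}u^2\delta^2\,)$ (this window has multiplicative length $2^d$, hence contains exactly one power of $2^{-d}$), and the $2^{\ell d}$ dyadic cubes of side $2^{-\ell}$ are pairwise disjoint, so pairwise at distance $\sqrt 2\cdot 2^{-\ell d/2}\geq u\delta$, while their number $2^{\ell d}>2^{1-d}u^{-2}\delta^{-2}$ is of the right order. Provided $u^2\leq 2^{1-d}$ one also has $\norm{\chi_Q}^2=2^{-\ell d}<2^{d-1}u^2\delta^2\leq\delta^2$, so that all these cubes lie in $\Phi$ and $D(u\delta,\Phi)\geq 2^{1-d}u^{-2}\delta^{-2}$. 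The \emph{main obstacle} is the complementary range of $u$: when the window of scales that are simultaneously admissible ($\leq\delta^2$) and sufficiently separated ($\geq u^2\delta^2/2$) becomes too short to contain a power $2^{-kd}$ --- which happens precisely when $u$ is close to $1$ and $\delta$ lies just below a dyadic value --- one must instead pack the finest scale compatible with $\norm{\chi_Q}\leq\delta$ and track carefully how much separation is lost, and then merge all constants into a single $A(d)$ valid for every $(u,\delta)\in(0,1]^2$. Reconciling the quantized scales $2^{-kd}$ supplied by the tree with the continuous size/separation constraints is the delicate point of the estimate; it is elementary but, as the authors themselves note, rather tedious, which is why they omit it.
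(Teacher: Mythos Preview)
The paper omits the proof entirely (``We skip the proofs, for they are elementary but rather tedious''), so there is nothing to compare against directly. Your upper-bound argument is correct and tidy: the net $\mathcal{N}$ of cubes with $c^2\leq\abs{Q}\leq\delta^2$ covers $\Phi$ at radius $u\delta/2$, and the geometric-series count yields the constant $2^{d+3}$. This is the half that matters for the paper, since only the upper estimate feeds into the entropy hypothesis \eqref{mrnorm:caprequ} of Proposition~\ref{lutz:teststatfinite}.

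For the lower bound there is a genuine gap, and it is not one that further bookkeeping can close: the bound as stated is in fact false. You correctly isolate the obstruction (large $u$, with $\delta^2$ lying strictly between consecutive dyadic scales) but then assume it is merely tedious to handle. Take $d\geq 2$, $u=1$, and $\delta^2=r\cdot 2^{-k_0 d}$ with any fixed $r\in(2,2^d)$ and $k_0$ large. Every $\chi_Q\in\Phi$ then has $\abs{Q}\leq 2^{-k_0 d}$, and for any two such cubes
\begin{equation*}
 d(\chi_Q,\chi_P)^2=\abs{Q\triangle P}\leq\abs{Q}+\abs{P}\leq 2\cdot 2^{-k_0 d}=\tfrac{2}{r}\,\delta^2<\delta^2,
\end{equation*}
so no two elements of $\Phi$ are $\delta$-separated and $D(\delta,\Phi)=1$, while $u^{-2}\delta^{-2}=r^{-1}2^{k_0 d}\to\infty$. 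Hence no constant $A=A(d)$ can make the lower inequality hold for all $(u,\delta)\in(0,1]^2$. Your single-scale packing does establish the lower bound in the regime $u\leq 2^{(1-d)/2}$, which is the best one can hope for; the uniform statement in the proposition appears to be an overclaim.
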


\bibliographystyle{abbrv}              
\bibliography{frick}   

\end{document}